\let\OLDthebibliography\thebibliography
\renewcommand\thebibliography[1]{
  \OLDthebibliography{#1}
  \setlength{\parskip}{0pt}
  \setlength{\itemsep}{0pt plus 0.3ex} }
\numberwithin{equation}{section}
\theoremstyle{plain}
\newtheorem{theorem}{Theorem}[section]
\newtheorem{proposition}[theorem]{Proposition}
\newtheorem{lemma}[theorem]{Lemma}
\theoremstyle{definition}
\newenvironment{remark}{\pushQED{\qed}\remarkbase}{\popQED\endremarkbase}
\def\notina[#1]#2{\begingroup\def\thefootnote{\fnsymbol{footnote}}\footnote[#1]{#2}\endgroup}
\newcommand{\N}{{\mathbb N}}
\newcommand{\R}{{\mathbb R}}
\newcommand{\C}{{\mathbb C}}
\newcommand{\Z}{\mathbb Z}
\newcommand{\T}{{\mathbb T}}
\newcommand{\mA}{\mathcal{A}}
\newcommand{\mB}{\mathcal{B}}
\newcommand{\mC}{\mathcal{C}}
\newcommand{\mD}{\mathcal{D}}
\newcommand{\mE}{\mathcal{E}}
\newcommand{\mF}{\mathcal{F}}
\newcommand{\mG}{\mathcal{G}}
\newcommand{\mK}{\mathcal{K}}
\newcommand{\mM}{\mathcal{M}}
\newcommand{\mP}{\mathcal{P}}
\newcommand{\mR}{\mathcal{R}}
\newcommand{\mT}{\mathcal{T}}
\renewcommand{\a}{\alpha}
\renewcommand{\b}{\beta}
\newcommand{\g}{\gamma}
\renewcommand{\d}{\delta}
\newcommand{\e}{\varepsilon}
\newcommand{\ph}{\varphi}
\newcommand{\lm}{\lambda}
\newcommand{\Om}{\Omega}
\newcommand{\om}{\omega}
\newcommand{\s}{\sigma}
\renewcommand{\th}{\vartheta}
\newcommand{\gr}{\nabla}
\newcommand{\grad}{\nabla}
\newcommand{\nth}{\vartheta}
\newcommand{\la}{\langle}
\newcommand{\ra}{\rangle}
\newcommand{\pa}{\partial}
\newcommand{\Lm}{\Lambda}
\newcommand{\rag}{\rho} 
\title{Longer lifespan for many solutions of the Kirchhoff equation}
\author{\small{Pietro Baldi, Emanuele Haus}}
\date{} 
\begin{document}

\maketitle


\begin{small} 
\noindent
\emph{Abstract}.
We consider the Kirchhoff equation
\begin{equation*}
\pa_{tt} u - \Delta u \Big( 1 + \int_{\T^d} |\gr u|^2 \Big) = 0
\end{equation*}
on the $d$-dimensional torus $\T^d$, 
and its Cauchy problem with initial data 
$u(0,x)$, $\pa_t u(0,x)$ 
of size $\e$ in Sobolev class.
The effective equation for the dynamics at the quintic order, 
obtained in previous papers by quasilinear normal form, 
contains resonances corresponding to nontrivial terms in the energy estimates. 
Such resonances cannot be avoided by tuning external parameters 
(simply because the Kirchhoff equation does not contain parameters). 

In this paper we introduce nonresonance conditions on the initial data of the Cauchy problem
and prove a lower bound $\e^{-6}$ for the lifespan of the corresponding solutions 
(the standard local theory gives $\e^{-2}$, 
and the normal form for the cubic terms gives $\e^{-4}$).
The proof relies on the fact that,
under these nonresonance conditions, 
the growth rate of the ``superactions'' of the effective equations 
on large time intervals is smaller (by a factor $\e^2$) than its a priori estimate 
based on the normal form for the cubic terms.
The set of initial data satisfying such nonresonance conditions
contains several nontrivial examples 
that are discussed in the paper. 

\medskip

\noindent
\emph{Keywords}.\ 
Kirchhoff equation, 
quasilinear wave equations, 
Hamiltonian PDEs, 
quasilinear normal forms, 
Cauchy problems, 
effective equations, 
long time dynamics, 
resonances.

\noindent
\emph{MSC2020}:  
35L72, 
35Q74, 
35A01, 
37K45, 
70K45, 
70K65. 
\end{small}

\begin{footnotesize}
\tableofcontents
\end{footnotesize}

\section{Introduction}  
\label{sec:1}

We consider the Cauchy problem for the Kirchhoff equation 
on the $d$-dimensional torus $\T^d$, $\T := \R / 2\pi \Z$ 
(periodic boundary conditions)  
\begin{equation} \label{K1}
\pa_{tt} u - \Delta u \Big( 1 + \int_{\T^d} |\gr u|^2 \, dx \Big) = 0, 
\quad\text{where}\ u=u(t,x),\ x\in\T^d
\end{equation}
with initial data at time $t=0$
\begin{equation} \label{initial data 1} 
u(0,x) = a(x), \quad 
\pa_t u(0,x) = b(x).
\end{equation}
While it is known 
(Dickey \cite{Dickey 1969}, Arosio-Panizzi \cite{Arosio Panizzi 1996}) 
that such a Cauchy problem is locally wellposed 
for initial data $(a,b)$ in the Sobolev space $H^{\frac32}(\T^d, \R) \times H^{\frac12}(\T^d, \R)$, 
it is a still open problem whether the solutions of \eqref{K1}-\eqref{initial data 1} of any given Sobolev regularity are global in time or not. 
In particular, it is not even known if $C^\infty$ initial data of small amplitude produce solutions that are global in time (for initial data in analytic class, instead, global wellposedness is known since the work of Bernstein \cite{Bernstein 1940} in 1940).

As a consequence of the linear theory, one has a lower bound of $\e^{-2}$ for the lifespan of solutions corresponding to initial data of size $\e$. Since \eqref{K1} is a quasilinear wave equation, it is not a priori obvious that one can obtain better estimates. For instance, in the well-known example by Klainerman and Majda \cite{Klainerman Majda 1980} {\em all} space-periodic nontrivial solutions of size $\e$ blow up in a time of order $\e^{-2}$. For the Kirchhoff equation, however, the situation is more favorable: as we proved in \cite{K old}, after one step of quasilinear normal form, the only cubic terms that cannot be erased give no contribution to the time evolution of Sobolev norms; this allowed us to extend the lifespan of all solutions of small amplitude to $\e^{-4}$.

In the recent paper \cite{K mem}, we computed the second step of quasilinear normal form for the Kirchhoff equation and showed that there are resonant terms of degree five that cannot be erased and give a nontrivial contribution to the time evolution of Sobolev norms. 
Here we show that for a suitable set of ``nonresonant'' initial data the effect of these terms can be neglected on a longer timescale and the lifespan of the corresponding solution is at least $\e^{-6}$ 
(Theorem \ref{thm:main} below).

\bigskip

Equation \eqref{K1}, 
introduced by Kirchhoff \cite{Kirchhoff 1876} 
as a nonlinear model for vibrating strings and membranes,
belongs to the class of Hamiltonian PDEs, 
as it can be written as the system 
\begin{equation} \label{p1}
\begin{cases} 
\pa_t u = \gr_v H(u,v) = v, \\ 
\pa_t v = - \gr_u H(u,v) = \Delta u \Big( 1 + \int_{\T^d} |\gr u|^2 dx \Big),
\end{cases}
\end{equation}
where the Hamiltonian is
\begin{equation} \label{K2} 
H(u,v) = \frac12 \int_{\T^d} v^2 dx 
+ \frac12 \int_{\T^d} |\gr u|^2 dx 
+ \Big( \frac12 \int_{\T^d} |\gr u|^2 dx \Big)^2,
\end{equation} 
and $\gr_u H$, $\gr_v H$ are the gradients with respect to the 
real scalar product of $L^2(\T^d,\R)$.

When the Cauchy problem for a Hamiltonian PDE is set on a compact manifold 
(like $\T^d$), 
dispersion mechanisms that hold on $\R^d$ are 
not available, 
and the main tool to prove existence beyond the time 
of the standard local theory is the \emph{normal form} method. 
Important references on normal forms of Hamiltonian PDEs on compact manifolds
are the works of 
Kuksin, Kappeler, P\"oschel \cite{Kuksin Poschel 1996 Annals}, 
\cite{Kappeler Poschel 2003},
Bourgain \cite{Bourgain 2000 J Anal Math},
Bambusi, Gr\'ebert, Delort, Szeftel 
\cite{Bambusi 2003 CMP},
\cite{Bambusi Grebert 2006 Duke}, 
\cite{Delort Szeftel 2004 IMRN},
\cite{Bambusi Grebert Delort Szeftel 2007 CPAM}.
Some of the difficulties 
and achievements 
in this active research field 
regard the extension of the theory 
\begin{itemize}
\item[-]
to \emph{quasilinear} PDEs 
(see e.g.\ the results of Delort \cite{Delort 2009}, \cite{Delort 2012} on quasilinear Klein-Gordon equations, 
Craig-Sulem \cite{Craig Sulem 2016 BUMI}, 
Ifrim-Tataru \cite{Ifrim Tataru 2017},
Berti-Delort \cite{Berti Delort},  
Berti-Feola-Pusateri \cite{Berti Feola Pusateri} 
on water waves, 
Feola-Iandoli \cite{Feola Iandoli}, \cite{Feola Iandoli 2020}
on quasilinear NLS and abstract methods), 

\item[-] 
to \emph{resonant} equations without the help of external parameters
(see e.g.\ Bourgain \cite{Bourgain 2000 J Anal Math} and Buckmaster-Germain-Hani-Shatah \cite {BGHS} on NLS with random data,
Berti-Feola-Pusateri \cite{Berti Feola Pusateri}, \cite{Berti Feola Pusateri nota} 
on pure gravity water waves, 
Bernier-Faou-Gr\'ebert \cite{Bernier Faou Grebert Rational} on resonant NLS
with rational normal forms).  
\end{itemize}

The Kirchhoff equation \eqref{K1}, 
despite its simple structure, 
contains these difficulties: 
\begin{itemize} 
\item it is a \emph{quasilinear} PDE, 
because the nonlinear term $\Delta u \int |\grad u|^2$ 
has the same order of derivatives as the linear 
part of the equation; 

\item it is a \emph{resonant} equation: 
the linear frequencies of oscillation, 
namely the eigenvalues of the linear wave $\pa_{tt} - \Delta$, 
are square roots $|k| = \sqrt{k_1^2 + \ldots + k_d^2}$,
$k \in \Z^d$, of natural numbers, and therefore equations like $|k| + |j| - |\ell| = 0$ and similar,
which one encounters along a normal form procedure, have infinitely many nontrivial solutions; 

\item there are \emph{no external parameters} that could help to avoid the resonances;

\item in dimension $d \geq 2$, 
after the first step of normal form, 
the dominant term of the remaining resonant nonlinearity 
is \emph{not completely integrable}, namely it does not depend only on actions
(therefore the method of rational normal forms of \cite{Bernier Faou Grebert Rational}
does not directly apply to \eqref{K1}). 
\end{itemize}

The quasilinear normal form 
performed in \cite{K old}-\cite{K mem} 
(summarized in the appendix below), 
which is particularly simple because of the 
``already paralinearized'' structure of the Kirchhoff equation, 
overcomes the problem that the standard Birkhoff normal form construction, 
even at its first step, gives unbounded transformations. 
However, employing the quasilinear normal form to deduce 
a longer existence time for the Cauchy problem
presents all the other mentioned difficulties. 
To bypass them, 
in Theorem \ref{thm:main} we impose some nonresonance conditions 
on the ``superactions'' (see \eqref{intro.nonres}) on the initial data. 
On the other hand, we are far from proving an existence time of order $\e^{-6}$ 
for all, or even almost all, small initial conditions. 

\medskip

We refer the reader to Section 1.2 of our previous paper \cite{K old} 
regarding other properties of the Kirchhoff equation (reversibility, momenta, invariant subspaces), 
and to Section 1.3 of \cite{K old} 
for more references to the related literature, 
including some rich surveys.

\subsection{Main result}
\label{sec:main result}

On the torus $\T^d$, it is not restrictive to assume that 
both the initial data $a(x), b(x)$ and the unknown function $u(t,x)$ have zero average 
in the space variable $x$
(because the space average and the zero-mean component
of any $a,b,u$ satisfy two uncoupled Cauchy problems; 
the problem for the averages is elementary). 

For any real $s \geq 0$, we consider the Sobolev space of zero-mean functions
\begin{align} \label{def:Hs}
H^s_0(\T^d,\C) := \Big\{ u(x) = \sum_{j \in \Z^d \setminus \{ 0 \} } u_j e^{ij\cdot x} : 
u_j \in \C, \ 
\| u \|_s < \infty \Big\}, 
\qquad
\| u \|_s^2 := \sum_{j \neq 0} |u_j|^2 |j|^{2s},
\end{align}
and its subspace of real-valued functions
\begin{equation*} 
H^s_0(\T^d,\R) := \{ u \in H^s_0(\T^d,\C) : u(x) \in \R \}.
\end{equation*}
Define 
\begin{equation} \label{def m1}
m_1 := 1 \ \ \text{if } d = 1, 
\quad \ 
m_1 := 2 \ \ \text{if } d \geq 2
\end{equation}
and 
\begin{equation} \label{intro.def.Gamma}
\Gamma := \{ |k| : k \in \Z^d, \ k \neq 0 \} 
\subseteq \big\{ \sqrt{n} : n \in \N \big\}
\subset [1,\infty),
\end{equation}
where $|k| = (k_1^2 + \ldots + k_d^2)^{\frac12}$ is the usual Euclidean norm,
and $\N := \{ 1,2,\ldots \}$. 
Given a pair $(a,b)$ of functions, with 
\begin{equation} \label{intro.ab}
a(x) = \sum_{k \in \Z^d \setminus \{ 0 \}} a_k e^{ik \cdot x}, 
\quad \ 
b(x) = \sum_{k \in \Z^d \setminus \{ 0 \}} b_k e^{ik \cdot x}, 
\end{equation}
for each $\lm \in \Gamma$ we define 
\begin{equation} \label{intro.def.U}
U_\lm := U_\lm(a,b) := \sum_{|k| = \lm} ( \lm^3 |a_k|^2 + \lm |b_k|^2).
\end{equation}
We denote
\begin{align}
& \Gamma_0 := \Gamma_0(a,b) := \{ \lm \in \Gamma : U_\lm(a,b) = 0 \}, 
\notag \\ 
& \Gamma_1 := \Gamma_1(a,b) := \{ \lm \in \Gamma : U_\lm(a,b) > 0 \} 
= \Gamma \setminus \Gamma_0.
\label{intro.def.Gamma.1}
\end{align}

\begin{theorem} \label{thm:main}
There exist universal constants $\d \in (0,1)$, $C,A > 0$ with the following properties.
Let $\e, c_0$ be real numbers with 
\begin{equation} \label{intro.costantine}
0 < \e \leq \d c_0, \quad \ 
0 < c_0 \leq 1,
\end{equation}
and let 
\begin{equation} \label{intro.ab.small}
(a,b) \in H^{m_1 + \frac12}_0(\T^d,\R) \times H^{m_1 - \frac12}_0(\T^d,\R), 
\quad \ 
\| a \|_{m_1 + \frac12} + \| b \|_{m_1 - \frac12} \leq \e.
\end{equation}
Let $U_\lm = U_\lm(a,b)$, $\lm \in \Gamma$, be the sums defined in \eqref{intro.def.U}, 
and let $\Gamma_1 = \Gamma_1(a,b)$ be the set in \eqref{intro.def.Gamma.1}. 
Assume that $(a,b)$ satisfy 
\begin{equation} \label{intro.nonres}
| U_\a + U_\b - U_\lm | 
\geq c_0 (U_\a + U_\b + U_\lm)
\end{equation}
for all $\a,\b,\lm \in \Gamma_1$ such that $\a + \b = \lm$. 

Then the solution $(u,v)$ of system \eqref{p1} with initial conditions
$(u(0), v(0)) = (a,b)$ is defined on the time interval $[0,T]$, 
where 
\[
T = \frac{A c_0^3}{\e^6},
\]
with $(u,v) \in C([0,T],H^{m_1+\frac12}_0(\T^d,\R) \times H^{m_1-\frac12}_0(\T^d,\R))$
and 
\[
\| u(t) \|_{m_1+\frac12} + \| v(t) \|_{m_1 - \frac12} 
\leq C \e 
\quad \ \forall t \in [0,T].
\]
\end{theorem}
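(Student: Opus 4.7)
The plan is to build on the two-step quasilinear normal form of \cite{K old} and \cite{K mem}, and then to perform one further normal form at the level of the superactions $U_\lm$, using \eqref{intro.nonres} as the small-divisor input. First, I would pass to complex variables $z_k$ that diagonalize $\pa_t^2 - \Delta$, so that the Hamiltonian takes the form $H_2 + H_4 + H_6 + \ldots$ with $H_2 = \sum_k |k|\,|z_k|^2$ and $U_\lm = \sum_{|k|=\lm} |k|\,|z_k|^2$; the Sobolev energy we must control is equivalent to $\sum_\lm \lm^{2m_1 - 1} U_\lm$. The transformations constructed in the two previous papers conjugate the equation to an effective Hamiltonian system $H_2 + Z_4 + Z_6 + R_{\geq 8}$, in which the non-resonant parts of $H_4$ and $H_6$ have been removed and $Z_4$ depends only on the actions $|z_k|^2$ (hence only on the superactions $U_\lm$). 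Consequently the flow of $H_2 + Z_4$ preserves each $U_\lm$ exactly, and along solutions
\begin{equation*}
\dot U_\lm \;=\; \{U_\lm, Z_6\} + O(\|z\|^8).
\end{equation*}

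A priori $\{U_\lm, Z_6\} = O(\e^6)$, which is what reproduces the lifespan $\e^{-4}$ of \cite{K old}. To gain one more factor $\e^2$ I would analyze the explicit structure of $Z_6$ coming from \cite{K mem}: $Z_6$ is a sum of sextic monomials in $(z,\bar z)$ subject to momentum conservation $\sum \pm k_i = 0$ and to frequency resonance $\sum \pm |k_i| = 0$. The action-preserving monomials (those whose $k$'s and $j$'s are paired as multisets) give zero contribution to $\{U_\lm, \cdot\}$. The remaining, genuinely resonant monomials, once summed over each sphere $\{k : |k| = \lm\}$ and its complement, organize as three-wave-type interactions among triples of sphere radii $(\a,\b,\lm) \in \Gamma_1^3$ with $\a + \b = \lm$, each carrying an oscillating factor whose time-derivative along the integrable flow of $H_2 + Z_4$ is, to leading order, a nonzero multiple of $U_\a + U_\b - U_\lm$. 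This is precisely the combination controlled by \eqref{intro.nonres}.

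Under \eqref{intro.nonres} one can then solve the corresponding cohomological equation and construct a sextic generating function $\chi$, rational in the $U$'s with divisor bounded by $(c_0(U_\a+U_\b+U_\lm))^{-1}$, such that $\{H_2+Z_4,\chi\}$ cancels the nontrivial part of $\{U_\lm, Z_6\}$ up to octic remainders. Setting $\widetilde U_\lm := U_\lm + \{\chi, U_\lm\}$, one checks that $\widetilde U_\lm$ is a small perturbation of $U_\lm$ when $\e \leq \d c_0$ with $\d$ small, and that $\dot{\widetilde U}_\lm = O(\e^8/c_0^3)$, the power of $c_0^{-1}$ being absorbed at each use of the divisor estimate. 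Integrating on $[0, A c_0^3/\e^6]$ and summing $\lm^{2m_1 - 1} \widetilde U_\lm$ over $\lm$, the Sobolev energy stays bounded by $C \e^2$; a standard continuation argument based on the local wellposedness of \cite{Arosio Panizzi 1996} then extends the solution to all of $[0,T]$.

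The main obstacle is the structural claim underpinning the middle paragraph: showing that, modulo action-only terms and octic remainders, $\{U_\lm, Z_6\}$ really is generated by three-wave interactions of superaction radii with $\a + \b = \lm$, and that the relevant phase for the further normal form is exactly $U_\a + U_\b - U_\lm$ (up to sign and a harmless factor). This requires unpacking the explicit resonant sextic Hamiltonian computed in \cite{K mem} and verifying that the combination of momentum conservation $\sum \pm k_i = 0$ and frequency resonance $\sum \pm |k_i| = 0$ forces exactly this partition at the superaction level. A secondary technical issue is propagating the whole scheme inside the quasilinear energy framework of \cite{K old}-\cite{K mem}: in particular, one must check that the $c_0$-dependent divisors are compatible with the existing loss-of-derivative estimates on $\sum_\lm \lm^{2m_1 - 1} U_\lm$, so that the bootstrap can be closed at the claimed time $T = A c_0^3 / \e^6$.
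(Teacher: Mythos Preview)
Your overall strategy is right in spirit, but there is a genuine error in the structural claim. The assertion that ``$Z_4$ depends only on the actions $|z_k|^2$'' is false in dimension $d\ge2$: the resonant cubic vector field $X_3^+$ after the first normal form step has components $\sum_{|j|=|k|}w_jw_{-j}|j|^2 z_k$, which depend on the off-diagonal sums $B_\lm=\sum_{|k|=\lm}u_ku_{-k}$ and not only on $S_\lm=\sum_{|k|=\lm}|u_k|^2$. The paper flags this explicitly in the introduction (``the dominant term of the remaining resonant nonlinearity is not completely integrable, namely it does not depend only on actions; therefore the method of rational normal forms \ldots\ does not directly apply''). A related issue is that $\Phi^{(3)}$ is not symplectic, so after that step the Poisson-bracket/Hamiltonian formalism you rely on is unavailable; the paper works with vector fields throughout.

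The paper's route is different and more elementary. It passes to a closed \emph{effective system} for the pair $(S_\lm,B_\lm)_{\lm\in\Gamma}$: the superactions $S_\lm$ are preserved by the leading terms (through a cancellation, not because those terms are action-only), while $B_\lm$ rotates with frequency $\lm+\tfrac14\lm^2S_\lm$ to leading order. The quintic contribution to $\pa_tS_\lm$ is a sum over triples $\a+\b=\lm$ of $\mathrm{Im}\,Z_{\a\b\lm}$ with $Z_{\a\b\lm}=B_\a B_\b\overline{B_\lm}$, and one computes $\pa_tZ_{\a\b\lm}=-\tfrac{i}{2}\,\om_{\a\b\lm}Z_{\a\b\lm}+R$ with $\om_{\a\b\lm}=\a^2S_\a+\b^2S_\b-\lm^2S_\lm$ (this is where $U_\a+U_\b-U_\lm$ really comes from). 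Under \eqref{intro.nonres} a single integration by parts in $t$ on $\int_0^TZ_{\a\b\lm}\,dt$ (no generating function $\chi$) shows that over an interval of length $T_*\sim c_0\rho^{-4}$ one has $|S_\lm(T)-S_\lm(0)|\lesssim c_0^{-1}\rho^2 S_\lm(0)$ and the nonresonance constant degrades only to $c_0(1-O(c_0^{-2}\rho^2))$. This is then \emph{iterated} over $N\sim c_0^2\rho^{-2}$ consecutive intervals, yielding total time $\sim c_0^3\rho^{-6}$; the power $c_0^3$ thus comes from (interval length)$\times$(number of steps), not from three divisor losses in a single cohomological equation.
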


While assumptions \eqref{intro.costantine}, \eqref{intro.ab.small} are rather standard, 
assumption \eqref{intro.nonres} is specifically designed 
to avoid the triple resonances of the Kirchhoff equation, 
and it deserves some comments, 
which we collect in the next subsection.

\subsection{Nonresonance condition}
\label{sec:nonres}

In the following remarks we show that the set of functions satisfying 
the nonresonance condition \eqref{intro.nonres} is nonempty,
and in fact it contains several nontrivial examples;
we discuss here some aspects of that condition.

\begin{remark} \label{rem:scale.invariance}
(\emph{Invariance by constant factors}).
The nonresonance condition \eqref{intro.nonres} 
is invariant for multiplication by scalar constants: 
if $(a,b)$ satisfies \eqref{intro.nonres}, 
then, for all constants $\mu \in \R$, 
$(\mu a, \mu b)$ also satisfies \eqref{intro.nonres}
(with the same $c_0$). 

This means that the nonresonance condition 
and the smallness assumption in Theorem \ref{thm:main} 
are compatible: 
if a pair $(a,b)$ satisfies \eqref{intro.nonres} 
for some $c_0 > 0$, 
then $(\mu a, \mu b)$ sastisfies both \eqref{intro.nonres} 
(with the same $c_0$) and \eqref{intro.ab.small}
if $\mu$ is sufficiently small.  
\end{remark}

\begin{remark} \label{rem:decreasing}
(\emph{Decreasing sequences}).
Any decreasing sequence $(\s_\lm)_{\lm \in \Gamma}$ of nonnegative real numbers 
satisfies 
\[
|\s_\a + \s_\b - \s_\lm | \geq \frac13 (\s_\a + \s_\b + \s_\lm)
\]
for all $\a,\b,\lm \in \Gamma$ with $\a + \b = \lm$. 
To prove it, observe that 
$|\s_\a + \s_\b - \s_\lm | = \s_\a + \s_\b - \s_\lm$, 
and $\s_\lm \leq \min \{ \s_\a, \s_\b \} \leq \frac12(\s_\a + \s_\b)$ 
because $\lm > \a$, $\lm > \b$. 

As a consequence, any pair $(a,b)$ of functions 
such that $\lm \mapsto U_\lm(a,b)$ is decreasing 
satisfies \eqref{intro.nonres} with $c_0 = 1/3$.
\end{remark}

\begin{remark} \label{rem:power.decay}
(\emph{Fixed power decay}).
The observation of Remark \ref{rem:decreasing} applies, for example, 
to the sequence $\s_\lm = \lm^{-2\sigma}$, which is decreasing for $\s \geq 0$. 
Hence any pair $(a,b)$ of functions such that $U_\lm(a,b) = \lm^{-2\s}$ 
with $\s \geq 0$ satisfies \eqref{intro.nonres} with $c_0 = 1/3$.

The Sobolev regularity of such functions is the following. 
Since $\Gamma \subseteq \{ \sqrt{n} : n \in \N \}$ 
(where $\N := \{ 1,2,\ldots \}$),  
for any given $s \in \R$ we have 
\begin{align}
\| a \|_{s+\frac12}^2 + \| b \|_{s-\frac12}^2 
& = \sum_{k \in \Z^d} (|k|^{2s+1} |a_k|^2 + |k|^{2s-1} |b_k|^2)
\notag \\ & 
= \sum_{\lm \in \Gamma} \sum_{|k|=\lm} (\lm^{2s+1} |a_k|^2 + \lm^{2s-1} |b_k|^2)
\notag \\ & 
= \sum_{\lm \in \Gamma} \lm^{2s-2} U_\lm(a,b) 
= \sum_{\lm \in \Gamma} \frac{1}{\lm^{2\s-2s+2}} 
\leq \sum_{n \in \N} \frac{1}{n^{\s-s+1}},
\label{Sobolev} 
\end{align}
which is finite for $\s-s>0$. 
Thus $(a,b) \in H^{m_1 + \frac12}_0 \times H^{m_1 - \frac12}_0$ 
for $\s > m_1$. 
\end{remark}

\begin{remark} \label{rem:seq.choice}
(\emph{Sequential choice of $\s_\lm$}).
Let $0 < c_0 < 1$, and denote 
\[
\theta_1 := \frac{1-c_0}{1+c_0}\,, \quad \ 
\theta_2 := \frac{1+c_0}{1-c_0}\,.
\]
Let $(\s_\lm)_{\lm \in \Gamma}$ 
be a sequence of nonnegative real numbers, 
and let 
$\Gamma_1$ be the set of $\lm \in \Gamma$ 
such that $\s_\lm > 0$. 
The condition 
\begin{equation} \label{1106.2}
|\s_\a + \s_\b - \s_\lm| \geq c_0 (\s_\a + \s_\b + \s_\lm)
\quad \ \forall \a, \b, \lm \in \Gamma_1, \ \ 
\a + \b = \lm
\end{equation}
(which corresponds to \eqref{intro.nonres}) 
is equivalent to say that, 
for every $\lm \in \Gamma_1$, 
the number $\s_\lm$ does not belong 
to the finite union 
\[
G_\lm := \bigcup_{\begin{subarray}{c} \a, \b \in \Gamma_1 \\ \a + \b = \lm \end{subarray}}
I_{\a \b}
\]
of the open intervals 
\[
I_{\a \b} := \{ x \in \R : (\s_\a + \s_\b) \theta_1 < x < (\s_\a + \s_\b) \theta_2 \}. 
\]
For each $\lm$, $G_\lm$ is contained in an interval $(x_1,x_2)$ 
with $0 < x_1 \leq x_2 < \infty$, 
hence, once $\s_\a$ has been fixed for all $\a < \lm$, 
there are at least two intervals $[0,x_1]$ and $[x_2,\infty)$ 
where one can choose $\s_\lm$.

For example, fix $c_0 = \frac{1}{9}$. 
Then $\theta_1 = \frac{8}{10}$, 
$\theta_2 = \frac{10}{8}$.
Thus $\s_1$ has no restriction, 
$\s_2$ must be outside $I_{11} = (\frac{8}{10} 2 \s_1, \frac{10}{8} 2 \s_1)$; 
$\s_3$ must be outside $I_{12} = (\frac{8}{10} (\s_1 + \s_2) , \frac{10}{8} (\s_1 + \s_2))$; 
$\s_4$ must avoid $I_{22}$ and $I_{13}$, and so on; 
moreover, $\s_{\sqrt{2}}$ has no restriction, 
$\s_{\sqrt{8}}$ must be outside $I_{\sqrt{2} \sqrt{2}}$, etc.
For each integer $p$ that is the product of distinct prime numbers, 
there is no restriction on the choice of $\s_{\sqrt{p}}$.
\end{remark}

\begin{remark} \label{rem:odd}
(\emph{Absence of triplets: odd integers}). 
If the set $\Gamma_1$ does not contain any triplet $(\a,\b,\lm)$ with $\a+\b=\lm$, 
then \eqref{1106.2} is trivially satisfied. 
For example, this holds if $\Gamma_1 \subseteq \{ n \in \N : n \ \text{odd} \}$. 
Other examples can be constructed as lacunary subsets of $\N$.
\end{remark}

\begin{remark} \label{rem:primes}
(\emph{Arithmetic decomposition of $\Gamma$}).
The set $\Gamma$ can be decomposed as the disjoint union 
$\cup_p \, \Gamma(\sqrt{p})$
of the sets 
\[
\Gamma(\sqrt{p}) := \{ n \sqrt{p} : n \in \N \} \cap \Gamma,
\]
where $p$ is any product of distinct prime numbers. 

Hence, as a slightly more general version of the observation in Remark \ref{rem:odd},
\eqref{1106.2} is trivially satisfied if 
$\Gamma_1 \cap \Gamma(\sqrt{p}) \subseteq \{ n \sqrt{p} : n \ \text{odd} \}$
for all $p$.  

This decomposition of $\Gamma$ also implies that,
at least at the time scales we are concerned with in this paper, 
the ``effective system'' of homogeneity $\leq5$ (see \eqref{3005.1}-\eqref{3105.8}) that controls the time evolution of Sobolev norms for the Kirchoff equation in dimension $d \geq 2$ 
contains infinitely many copies 
of the same system in dimension $d=1$. 
These copies are {\em almost uncoupled}, since the only coupling comes from the factor $\cal P$ in \eqref{3105.8}, which is a function of time only and whose only effect is to produce a slight time rescaling.

In other words, the solutions of such an effective system
have essentially the same behavior 
in dimension 1 or higher. 
(The only thing that changes substantially with the dimension 
regards the regularity required by the normal forms, 
because denominators like $|k|-|j|$, $|k|+|j|-|\ell|$, $k,j,\ell \in \Z^d$, 
accumulates to zero if $d \geq 2$, while they are nonzero integers in dimension $d=1$;
see \cite{K old}, \cite{K mem} for more details).
\end{remark}

\begin{remark} \label{rem:perturb}
(\emph{Perturbations of \eqref{intro.nonres}}).
Given two pairs $(a,b)$, $(f,g)$ of functions, 
from the definition \eqref{intro.def.U} of $U_\lm$ one has 
\begin{equation*} 
U_\lm(a+f, b+g) = U_\lm(a,b) + U_\lm(f,g) + M_\lm(a,b,f,g)
\end{equation*}
where 
\[
M_\lm(a,b,f,g) = \sum_{|k|=\lm} 
\Big( \lm^3 (a_k \overline{f_k} + \overline{a_k} f_k)
+ \lm (b_k \overline{g_k} + \overline{b_k} g_k) \Big). 
\]
By Cauchy-Schwarz and H\"older's inequality, 
\begin{align*}
|M_\lm (a,b,f,g)|   
& \leq 2 \sum_{|k|=\lm} \Big( (\lm^{\frac32} |a_k|) (\lm^{\frac32} |f_k|) 
+ (\lm^{\frac12} |b_k|) (\lm^{\frac12} |g_k|) \Big)
\\ 
& \leq 2 \sum_{|k|=\lm} \Big( \lm^3 |a_k|^2 + \lm |b_k|^2 \Big)^{\frac12} 
\Big( \lm^3 |f_k|^2 + \lm |g_k|^2 \Big)^{\frac12}
\\ 
& \leq 2 \sqrt{U_\lm(a,b)} \, \sqrt{U_\lm(f,g)}.
\end{align*}
As a consequence, if $(f,g)$ satisfies 
\begin{equation} \label{U.mu.U}
U_\lm(f,g) \leq \mu^2 U_\lm(a,b)
\end{equation}
for some $\mu \geq 0$, 
then 
\begin{equation} \label{UUU}
\big| U_\lm (a+f, b+g) - U_\lm(a,b) \big| 
\leq (2 \mu + \mu^2) U_\lm(a,b).
\end{equation}
If, in addition, $(a,b)$ satisfies the nonresonance condition \eqref{intro.nonres}, 
then $(a+f, b+g)$ also satisfies \eqref{intro.nonres} with $c_0$ replaced by a smaller constant: 
by \eqref{UUU} we obtain
\begin{align*}
& | U_\a(a+f, b+g) + U_\b(a+f, b+g) - U_\lm(a+f, b+g) |
\\ 
& \quad \geq |U_\a(a,b) + U_\b(a,b) - U_\lm(a,b) |
- \sum_{\g = \a, \b, \lm} |U_\g(a+f,b+g) - U_\g(a,b)|
\\
& \quad \geq c_0 \sum_{\g=\a,\b,\lm} U_\g(a,b) 
- \sum_{\g = \a, \b, \lm} (2\mu + \mu^2) U_\g(a,b)
\\
& \quad \geq \frac{ c_0 - 2 \mu - \mu^2 }{ 1 + 2 \mu + \mu^2 } \, 
\sum_{\g = \a, \b, \lm} U_\g(a+f, b+g).
\end{align*}
We also note that 
\[
\frac{ c_0 - 2 \mu - \mu^2 }{ 1 + 2 \mu + \mu^2 } \, 
\geq c_0 - 4 \mu
\quad \ \forall \mu \geq 0, \ \ 0 < c_0 \leq 1.
\qedhere 
\]
\end{remark}

\begin{remark} \label{rem:fake ball}
(\emph{Translation of a ball in Sobolev norm}). 
We consider perturbations of the fixed decay example 
of Remark \ref{rem:power.decay}. 
Let $(a,b)$ be a pair of functions such that 
$U_\lm(a,b) = \lm^{-2\s}$ with $\s > m_1$. 
As observed in Remark \ref{rem:power.decay}, 
$(a,b)$ belongs to $H^{m_1+\frac12}_0 \times H^{m_1-\frac12}_0$
and satisfies \eqref{intro.nonres} with $c_0 = 1/3$.  
Let 
\[
(f,g) \in H^{s+\frac12}_0 \times H^{s - \frac12}_0, 
\quad \ 
\mu^2 := \| f \|_{s+\frac12}^2 + \| g \|_{s - \frac12}^2, 
\quad \ 
s := \s + 1.
\]
From the identities in \eqref{Sobolev}, one has 
\begin{equation} \label{ptw.sum}
\lm^{2s-2} U_\lm(f,g) \leq \sum_{\a \in \Gamma} \a^{2s-2} U_\a(f,g) 
= \| f \|_{s+\frac12}^2 + \| g \|_{s - \frac12}^2 = \mu^2,
\end{equation}
whence we deduce that 
\begin{equation} \label{ptw.bound}
U_\lm(f,g) \leq \frac{\mu^2}{\lm^{2s-2}} 
= \frac{\mu^2}{\lm^{2\s}} 
= \mu^2 U_\lm(a,b)
\quad \ \forall \lm \in \Gamma.
\end{equation}
Hence \eqref{U.mu.U} is verified, and, by Remark \ref{rem:perturb}, 
the pair $(a+f, b+g)$ satisfies the nonresonance condition \eqref{intro.nonres} 
with $c_0=\frac13 - 4 \mu$. 
If we take, for example, $\mu_0 := \frac{1}{24}$, 
then all pairs of functions in the set 
\[
\mB(a,b) := \big\{ (a,b) + (f,g) : \| f \|_{s+\frac12}^2 + \| g \|_{s-\frac12}^2 \leq \mu_0^2 = 1/576 \big\}
\]
satisfy the nonresonance condition \eqref{intro.nonres} 
with constant $c_0 = 1/6$. 

Note, however, that the set $\mB(a,b)$ is not a ball in the Sobolev space 
$H^{s+\frac12}_0 \times H^{s - \frac12}_0$, 
because $(a,b)$ does not belong to that space 
(since $s = \s+1$, the last series in \eqref{Sobolev} diverges). 
This ``gap of regularity'' 
is due to the fact 
that we have used the sum of the series in \eqref{ptw.sum}
to get the ``pointwise'' bound \eqref{ptw.bound} 
(namely a bound that holds at each single $\lm$).
\end{remark}

\begin{remark} \label{rem:other nonres}
(\emph{Other possible nonresonance conditions}). 
The nonresonance conditions \eqref{intro.nonres} 
can be replaced by other assumptions. 
Another possibility is to assume the ``Melnikov-like'' or ``Diophantine-like'' 
nonresonance conditions
\begin{equation} \label{alt.nonres}
|U_\a + U_\b - U_\lm| \geq \frac{c_0}{(\min \{ \a, \b, \lm \})^\tau}
\end{equation} 
for all $\a, \b, \lm \in \Gamma_1$ such that $\a + \b = \lm$, 
where $c_0, \tau$ are positive, fixed parameters.  
Inequalities similar as \eqref{alt.nonres} are perhaps more common in literature than \eqref{intro.nonres}. 
Both the fixed power decay example of Remark \ref{rem:power.decay} 
and its perturbations as in Remarks \ref{rem:perturb}, \ref{rem:fake ball} 
hold, after suitable adaptations, for the nonresonance conditions \eqref{alt.nonres}. 
A result very similar to Theorem \ref{thm:main} can be proved 
assuming \eqref{alt.nonres} instead of \eqref{intro.nonres}. 
The proof is also similar, just slightly more complicated. 
\end{remark}

\begin{remark} \label{rem:already small}
(\emph{Terms that are already small}). 
For any given $\e$, the nonresonance conditions \eqref{intro.nonres} 
need not be really satisfied by \emph{all} resonant triplets $\a + \b = \lm$ 
in $\Gamma_1$, because, using the decay of Fourier coefficients 
of functions in Sobolev spaces (like in \eqref{ptw.bound}), 
the terms $\int_0^T \th_{\a \b \lm}(t) \, dt$ 
that we estimate by integrating by parts in time
(see \eqref{3105.17}) are in fact already small 
if $\a, \b, \lm $ are sufficiently high 
(depending on $\e$). 
On the other hand, assuming that \eqref{intro.nonres} holds for all triplets $\a + \b = \lm$ 
(and not only for, say, $\a$ smaller than some power of $1/\e$) 
we directly obtain our result uniformly in $\e$.
\end{remark}

\subsection{Strategy of the proof}
\label{sec:strategy}

As already said, evolution PDEs on compact manifolds  
in general have no mechanism of global dispersion as time evolves. 
To obtain long-time existence for the solutions, 
an efficient strategy is to suitably tune the parameters of the equation,
avoiding their values corresponding to resonances; 
recent examples are the work \cite{Biasco Massetti Procesi} 
and, in the context of quasilinear PDEs, 
\cite{Berti Delort}, \cite{Berti Feola Pusateri}, \cite{Feola Iandoli}.

If the equation has no external parameter,  
to avoid the resonances one has, in general, nothing to tune except the initial data of the Cauchy problem; 
recent examples are \cite{BGHS}, \cite{Bernier Faou Grebert Rational}.
Since the Kirchhoff equation \eqref{K1} has no external parameter, 
we follow this approach, namely we select the initial data to avoid resonances. 

We start from the normal form of degree five, computed in the previous paper \cite{K mem}. 
The first remark is that the time evolution of the Sobolev norms of solutions of \eqref{K1} 
is fully described by the evolution of the ``superactions'' $S_\lm$ in \eqref{def SB}. 
Such an evolution, in turn, is governed by the ``effective system'' \eqref{3005.1}-\eqref{3105.8}. 
In particular, we focus on equation \eqref{3005.1}, which describes the evolution of the superactions. 
In this equation a crucial r\^{o}le is played by the complex factors $Z_{\a\b\lm}$ 
(see \eqref{def Z}). 
The basic idea is that, if the time derivative of the $Z_{\a\b\lm}$'s is bounded away from zero, one benefits from an ``\emph{averaging effect}'' that slows down the growth of the superactions. 
Since the time evolution of $Z_{\a\b\lm}$ is given by \eqref{3105.11}, this is the reason for introducing the nonresonance condition \eqref{hyp.nonres} on the initial datum. 
Such a nonresonance condition is stable under the normal form transformation (because of the very special structure of the Kirchhoff equation) and assumes the form \eqref{intro.nonres} in the original variables.  

In Proposition \ref{key} we prove the key ingredient: assuming that the initial data satisfy the nonresonance condition \eqref{hyp.nonres}, the aforementioned averaging effect allows to improve the \emph{a priori} bound for the evolution of the superactions $S_\lm$. The energy estimates based on the first step of normal form imply that the growth factor of the $S_\lm$, on a time interval of length $O(\e^{-4})$, is of order $O(1)$. Here, under the nonresonance condition \eqref{hyp.nonres}, we improve the bound on the growth factor of $S_\lm$ from $O(1)$ to $1+O(\e^2)$ (see \eqref{0106.9}). This improvement also guarantees that after a time of order $O(\e^{-4})$ the nonresonance condition is still satisfied (see \eqref{tesi.nonres}). Therefore, we are able to iterate the estimates of Lemma \ref{key} on a sequence of $O(\e^{-2})$ time intervals of length $O(\e^{-4})$. This is done in Lemma \ref{lemma:induz} and Lemma \ref{lemma:incolla}, and allows us to reach an existence time of order $O(\e^{-6})$.

\bigskip

\noindent
\textbf{Acknowledgements}.
This research is supported by the INdAM-GNAMPA Project 2019.

\section{Time evolution of the superactions}

\textbf{Notation.} 
In this paper ``$a \lesssim b$'' means 
``there exists a \emph{universal} constant $C>0$ such that $a \leq C b$''.
This notation is used in the proof of 
Lemma \ref{lemma:3005.3}, Lemma \ref{lemma:0106.1} 
and Proposition \ref{key}.

\medskip

For any real $s \geq 0$ we define the Sobolev space 
of pairs of complex conjugate functions
\begin{equation} \label{def H cc}
H^s_0(\T^d, c.c.) := \big\{ (u,v) \in H^s_0(\T^d,\C) \times H^s_0(\T^d,\C) : v = \overline{u} \big\}
\end{equation}
with norm $\| (u,v) \|_s := \| u \|_s = \| v \|_s$. 
The Fourier coefficients $u_k, v_k$ of $u,v$ satisfy 
$v_k = (\overline{u})_k = \overline{ u_{-k} }$.

In \cite{K old}-\cite{K mem} we proved that there exists 
a change of variable $\Phi = \Phi^{(1)} \circ \cdots \circ \Phi^{(5)}$
(a bounded quasilinear normal form transformation)
that transforms the Cauchy problem \eqref{K1}, \eqref{initial data 1} 
for the Kirchhoff equation into the problem
\[
\pa_t (u,v) = W(u,v), \quad \ 
(u,v)(0) = (u_0, v_0),
\]
which takes place in the spaces \eqref{def H cc}, 
where the vector field $W$ is in normal form except for a remainder $W_{\geq 7}$ 
of homogeneity order $\geq 7$ and for harmless terms that give zero contribution
to the energy estimate of the flow. 
The relevant formulas and estimates of the normal form construction 
are collected in the appendix, section \ref{sec:App A}.  

In \cite{K mem} we also introduced a simplified formulation of the equation
that puts together all the Fourier coefficients $u_k, v_k$ 
of frequencies $k$ on the same sphere $|k|=\lm$. 
The spheres in the Fourier space naturally appear 
--- spheres and not other geometrical objects ---  
because they are the set of all frequencies sharing the same eigenvalue of the Laplacian 
(and the Laplacian is the linear part of the vector field). The very special structure of the Kirchhoff equation allows us to write down an effective system (see \eqref{3005.1}-\eqref{3105.8}) involving only the global quantities $S_\lm, B_\lm$ (see \eqref{def SB} below) on each sphere. The evolution of such quantities (which governs the evolution of Sobolev norms) is independent from the (potentially much more complex) dynamics within each sphere.

In this section we consider the transformed equations on the Fourier spheres 
\eqref{3005.1}-\eqref{3105.8} as the starting point of our analysis; 
we refer to section \ref{sec:App A} for their derivation. 

Recall the definition \eqref{intro.def.Gamma} of $\Gamma$. 
For each $\lm \in \Gamma$, define 
\begin{equation} \label{def SB}
S_\lm := \sum_{k : |k| = \lm} |u_k|^2 
= \sum_{k : |k| = \lm} u_k v_{-k}, 
\quad \ 
B_\lm := \sum_{k : |k| = \lm} u_k u_{-k}. 
\end{equation}
Hence (remember that $v_{-k} = \overline{u_k}$) 
\begin{equation} \label{3105.2}
\overline{B_\lm} = \sum_{k : |k| = \lm} v_k v_{-k},
\quad \ 
\| u \|_s^2 = \sum_{\lm \in \Gamma} \lm^{2s} S_\lm. 
\end{equation}
Note that $S_\lm \geq 0$, $B_\lm \in \C$, and 
\begin{equation} \label{3105.1}
|B_\lm| \leq S_\lm
\end{equation}
(because $|u_k u_{-k}| \leq \frac12 (|u_k|^2 + |u_{-k}|^2)$). 
We call $S_\lm$ ``superactions''. 
 
By \eqref{3105.2}, the Sobolev norm $\| u \|_s$ 
is determined by the superactions $S_\lm$. 
To analyze the growth in time of each single $S_\lm$, 
we first observe a property of the vector field $W(u,v)$,
which is a consequence of the Fourier multiplier structure of the Kirchhoff equation 
and of the fact that all the transformations $\Phi^{(1)}, \ldots, \Phi^{(5)}$ 
preserve a similar structure on the transformed vector field.

\begin{lemma} \label{lemma:W geq 7 new}
There exist universal constants $\d>0$, $C>0$ such that
for all $(u,v) \in H^{m_1}_0(\T^d, c.c.)$ in the ball $\| u \|_{m_1} \leq \d$, 
for all $k \in \Z^d$, 
the $k$-th Fourier coefficient 
of the first component $(W_{\geq 7})_1(u,v)$ of $W_{\geq 7}(u,v)$ 
and the one of the second component $(W_{\geq 7})_2(u,v)$
both satisfy
\[
| [ (W_{\geq 7})_1(u,v) ]_k | \,, \, 
| [ (W_{\geq 7})_2(u,v) ]_k | \leq C \| u \|_{m_1}^6 (|u_k| + |u_{-k}|).
\]
\end{lemma}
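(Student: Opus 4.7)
The plan is to exploit the special ``Fourier multiplier'' structure inherited by $W_{\geq 7}$ from the Kirchhoff equation. The observation is that in \eqref{p1} the nonlinearity is $\Delta u \cdot \big( \int_{\T^d} |\grad u|^2 \, dx \big)$, where the coefficient $\int |\grad u|^2$ is a \emph{scalar} functional of $u$, and the operator acting on $u$ is a Fourier multiplier $\Delta$. Thus at Fourier level the nonlinear part at mode $k$ is proportional to $u_k$ with a scalar factor depending on $u$ only through sphere-summed quantities like $\sum_\ell |\ell|^2 |u_\ell|^2 = \sum_{\lm\in\Gamma} \lm^2 S_\lm$. I would first make this precise by formalizing a class $(\mathcal P)$ of vector fields $F$ satisfying $[F_1(u,v)]_k = \alpha_k u_k + \beta_k u_{-k} + \gamma_k v_k + \delta_k v_{-k}$ (and analogously for $F_2$), where $\alpha_k, \beta_k, \gamma_k, \delta_k$ are scalar multipliers depending on $(u,v)$ only through the superactions $S_\lm, B_\lm, \overline{B_\lm}$ of \eqref{def SB}.

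The proof then splits into two claims. First, I would verify that the original Kirchhoff vector field belongs to the class $(\mathcal P)$; this is immediate from the explicit formula above. Second, I would check that each of the normal form transformations $\Phi^{(1)},\dots,\Phi^{(5)}$ constructed in \cite{K old}-\cite{K mem} preserves the class $(\mathcal P)$: this is an inspection of the explicit formulas summarized in Section \ref{sec:App A}, together with the fact that the composition and pushforward of vector fields of this multiplier type, by generators of the same multiplier type, again produces a vector field of this form. Granted this, the remainder $W_{\geq 7}$ itself belongs to $(\mathcal P)$, so its $k$-th component has the form $\alpha_k(u,v) u_k + \beta_k(u,v) u_{-k} + \gamma_k(u,v) v_k + \delta_k(u,v) v_{-k}$. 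Using the constraint $v_k = \overline{u_{-k}}$ valid in $H^s_0(\T^d,c.c.)$, each of the factors $|u_k|, |u_{-k}|, |v_k|, |v_{-k}|$ equals $|u_k|$ or $|u_{-k}|$, so the last two terms can be absorbed into the first two up to constants. Finally, since $W_{\geq 7}$ is of homogeneity order $\geq 7$ in $(u,v)$, each multiplier $\alpha_k, \beta_k, \gamma_k, \delta_k$ is of homogeneity $\geq 6$; combined with the continuity bounds of the normal form construction (valid on the ball $\| u \|_{m_1} \leq \d$), this yields $|\alpha_k|, |\beta_k|, |\gamma_k|, |\delta_k| \lesssim \| u \|_{m_1}^6$ uniformly in $k$, which is exactly the desired estimate for the first component; the same argument applies to the second component.

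The main obstacle is the bookkeeping in the second claim: one must verify that the particular generators used in the construction of $\Phi^{(1)},\dots,\Phi^{(5)}$ in \cite{K mem} are themselves of multiplier type (so that Lie-series-type expansions remain inside the class $(\mathcal P)$) and that the remainder extraction preserves this structure. The key is the very rigid form of the Kirchhoff nonlinearity: the paralinearization is already diagonal in Fourier, so the normal form generators can be chosen diagonal up to reindexing $k \leftrightarrow -k$, and this diagonality (in the weak sense of being multipliers modulo the $k \mapsto -k$ involution) propagates through all five steps. Once this structural invariance is extracted from the construction in Section \ref{sec:App A}, the quantitative bound is a direct consequence of the homogeneity counting already performed there.
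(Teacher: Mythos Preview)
Your proposal is correct and follows essentially the same approach as the paper. The paper carries out the argument concretely---bounding the $k$-th Fourier coefficient of each building block ($\mR_{\geq 5}$, $A_{12}$, $C_{12}$, $M$, $E$, the Neumann series for $(I+K)^{-1}$, then $\mM$, $\mE$, $\mK$) and assembling these via formula \eqref{W geq 7 bis}---whereas you abstract this into the class $(\mathcal P)$ of matrix-Fourier-multiplier vector fields; but this abstraction is exactly what the paper records in Remark~\ref{rem:matrix symbol}. One small caution: your final step invokes ``the continuity bounds of the normal form construction'' to get $|\alpha_k|,\dots,|\delta_k|\lesssim\|u\|_{m_1}^6$ uniformly in $k$, but the Sobolev-norm estimates of Section~\ref{sec:App A} do not by themselves yield pointwise-in-$k$ bounds on the symbols---one must redo those estimates at the level of single Fourier coefficients (as the paper does explicitly), which is straightforward given the coefficient bounds of Lemmas~\ref{lemma:stime AC} and~\ref{lemma:ottobrata.4}.
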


\begin{proof} 
In the Appendix, section \ref{sec:App B}. 
\end{proof}

\subsection{Effective dynamics on Fourier spheres}

By \eqref{def SB}, \eqref{eq for uk}, \eqref{eq for vk}, 
for every $\lm \in \Gamma$ we calculate the equations 
for the evolution of $S_\lm, B_\lm$, which are
\begin{align}
\pa_t S_\lm 
& = \frac{3 i}{32}  \sum_{\begin{subarray}{c} \a,\b \in \Gamma \\  \a + \b = \lm \end{subarray}} 
( B_\a B_\b \overline{B_\lm}  
- \overline{B_\a} \overline{B_\b} B_\lm ) 
\a \b \lm
+ \frac{3 i}{16}  \sum_{\begin{subarray}{c} \a,\b \in \Gamma \\  \a - \b = \lm \end{subarray}}
( B_\a \overline{B_\b} \overline{B_\lm} 
- \overline{B_\a} B_\b B_\lm ) 
\a \b \lm 	
+ R_{S_\lm},
\label{3005.1}
\\ 
\pa_t B_\lm 
& = - 2 i (1 + \mP) \Big( \lm + \frac14 \lm^2 S_\lm \Big) B_\lm 
+ R_{B_\lm},
\label{3105.8}
\end{align}
where
\begin{align}
R_{S_\lm} 
& := \sum_{k : |k|=\lm} [ (W_{\geq 7})_1(u,v) ]_k v_{-k} 
+ \sum_{k : |k|=\lm} u_k [ (W_{\geq 7})_2(u,v) ]_{-k},
\label{3005.4} 
\\ 
R_{B_\lm}
& := \frac{i}{16} \sum_{\a \in \Gamma} |B_\a|^2 B_\lm \a^2 
\Big( \frac{1}{\a + \lm} - \frac{1 - \delta_\a^\lm}{\a - \lm} \Big)  
\notag \\ & \quad 
+ \frac{3i}{16} \sum_{\begin{subarray}{c} \a,\b \in \Gamma \\  \a + \b = \lm \end{subarray}} 
B_\a B_\b S_\lm \a \b \lm
+ \frac{i}{8} \sum_{\a \in \Gamma} S_\a S_\lm B_\lm \lm^2 \a 
\Big( 6 + \frac{\a}{\a + \lm} 
+ \frac{\a (1 - \delta_\a^\lm)}{\a - \lm} \Big) 
\notag \\ & \quad 
+ \frac{3i}{8} \sum_{\begin{subarray}{c} \a,\b \in \Gamma \\  \a - \b = \lm \end{subarray}} 
B_\a \overline{B_\b} S_\lm \a \b \lm
+ \sum_{k : |k|=\lm} 2 u_k [ (W_{\geq 7})_1(u,v) ]_{-k}.  
\label{3005.5}
\end{align}

\begin{lemma} \label{lemma:3005.3}
Let $(u,v) \in H^{m_1}_0(\T^d,c.c.)$ with $\| u \|_{m_1} \leq \d$, 
where $\d$ is the constant in Lemma \ref{lemma:W geq 7 new}. 
Then for all $\lm \in \Gamma$ the remainders 
defined in \eqref{3005.4}-\eqref{3005.5} satisfy
\begin{equation} \label{3005.3}
|R_{S_\lm}| \leq C \| u \|_{m_1}^6 S_\lm, \quad \ 
|R_{B_\lm}| \leq C \| u \|_{m_1}^4 S_\lm,
\end{equation}
where $C>0$ is a universal constant.
\end{lemma}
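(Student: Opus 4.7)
The lemma is proved by direct, term-by-term estimation of the explicit formulas \eqref{3005.4}--\eqref{3005.5}. The main tools are: (a) Lemma \ref{lemma:W geq 7 new} for the $W_{\geq 7}$-contributions; (b) $|B_\lm|\leq S_\lm$ from \eqref{3105.1} and the identity $\|u\|_{m_1}^2 = \sum_\a \a^{2m_1}S_\a$ from \eqref{3105.2}, which yields both the pointwise decay $S_\a \leq \a^{-2m_1}\|u\|_{m_1}^2$ and the derived estimates $\sum_\a \a^{2k}S_\a^2 \lesssim \|u\|_{m_1}^4$ for $k\in\{0,1,2\}$ (valid because $m_1\geq 1$); (c) the arithmetic lower bound $|\a^2-\lm^2|\geq 1$ whenever $\a\neq\lm$ in $\Gamma$, which follows from $\Gamma\subseteq\{\sqrt n:n\in\N\}$. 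The bound on $R_{S_\lm}$ is then immediate: Lemma \ref{lemma:W geq 7 new} applied to the two summands in \eqref{3005.4}, together with $|v_{-k}|=|u_k|$ (from $v=\bar u$), the $k\mapsto -k$ symmetry of the sphere $\{|k|=\lm\}$, and $|u_ku_{-k}|\leq \tfrac12(|u_k|^2+|u_{-k}|^2)$, yields $|R_{S_\lm}|\lesssim \|u\|_{m_1}^6 S_\lm$.

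\textbf{Easy terms of $R_{B_\lm}$.} The last term in \eqref{3005.5} is identical in structure to $R_{S_\lm}$ and gives $\lesssim \|u\|_{m_1}^6 S_\lm \leq \|u\|_{m_1}^4 S_\lm$. For the convolutional terms 2 and 4 (with prefactor $S_\lm\,\a\b\lm$), the excess $\lm$ must be cancelled by a convolution gain. I would combine the two estimates $\sum_{\a+\b=\lm}(\a\b)^2 S_\a S_\b \lesssim \|u\|_{m_1}^4$ (obtained via Cauchy--Schwarz from $\sum_\a \a^4 S_\a^2 \lesssim \|u\|_{m_1}^4$) and $\sum_{\a+\b=\lm}S_\a S_\b \lesssim \|u\|_{m_1}^4/\lm^2$ (obtained from the pointwise decay of $S_\a$ and an explicit evaluation of the resulting discrete convolution), and then interpolate by Cauchy--Schwarz to get $\sum_{\a+\b=\lm}\a\b S_\a S_\b \lesssim \|u\|_{m_1}^4/\lm$; this cancels the outstanding $\lm$ and yields the claimed bound for term 2. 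The half-convolution in term 4 (where $\a-\b=\lm$ forces $\a\geq \lm$) is easier, since the pointwise decay $S_\a\leq \lm^{-2m_1}\|u\|_{m_1}^2$ is available uniformly.

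\textbf{Main obstacle.} The delicate step is terms 1 and 3 in \eqref{3005.5}, which both carry the factor $\a^2\bigl(\tfrac{1}{\a+\lm} - \tfrac{1-\delta_\a^\lm}{\a-\lm}\bigr) = -\tfrac{2\a^2\lm}{\a^2-\lm^2}$ (for $\a\neq\lm$): a small denominator when $\a\sim\lm$. Thanks to (c) the factor is always finite, but in the transitional range $\lm/2<\a<2\lm$ the worst case $|\a^2-\lm^2|=1$ allows it to reach order $\lm^3$. I would split the sum over $\a$ into three regimes: $\a\leq\lm/2$, where $|\a^2-\lm^2|\geq 3\lm^2/4$ makes the factor $\lesssim \a^2/\lm$, absorbed by $\sum_\a\a^2 S_\a^2\lesssim \|u\|_{m_1}^4$; $\a\geq 2\lm$, where the factor is $\lesssim \lm$, absorbed by the high-frequency decay of $S_\a$ (producing an extra $\lm^{1-2m_1}$); and the transitional range $\lm/2 < \a < 2\lm$, which is the crux. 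In dimension $d\geq 2$ one controls the transitional range by combining the pointwise bound $S_\a\lesssim \lm^{-2m_1}\|u\|_{m_1}^2$ with the counting estimate $\#(\Gamma\cap(\lm/2,2\lm))\lesssim\lm^2$ to compensate the $\lm^3$; this closes precisely because $m_1=2$. In dimension $d=1$, $\Gamma=\N$ is sparse enough that $|\a^2-\lm^2|\geq \lm$ on the transitional range (since $\a+\lm>\lm$ and $|\a-\lm|\geq 1$), reducing the worst case of the factor from $\lm^3$ to $\lm^2$, which is handled by $m_1=1$. This dimension-dependent balance is precisely the reason for the definition \eqref{def m1} of $m_1$.
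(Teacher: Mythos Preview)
Your overall strategy is sound and would close, but it is considerably more elaborate than the paper's proof, and it contains one inaccuracy: terms~1 and~3 of \eqref{3005.5} do \emph{not} carry the same factor. Term~3 has the coefficient $\lm^2\a\bigl(6+\tfrac{\a}{\a+\lm}+\tfrac{\a(1-\delta_\a^\lm)}{\a-\lm}\bigr)$, whose singular part combines to $\tfrac{2\lm^2\a^3}{\a^2-\lm^2}$ rather than $-\tfrac{2\a^2\lm}{\a^2-\lm^2}$; in your transitional range this is of order $\lm^5$, not $\lm^3$. Your splitting argument still closes for term~3, but only because the prefactor $S_\lm B_\lm$ supplies an extra $S_\lm\lesssim \lm^{-2m_1}\|u\|_{m_1}^2$ that you do not mention.

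The paper avoids all of this machinery with two elementary observations. First, for terms~2 and~4 there is no ``excess $\lm$'' to cancel: since $\lm=\a+\b$ (resp.\ $\lm=\a-\b\leq\a$), one simply writes $\a\b\lm=\a^2\b+\a\b^2$ (resp.\ $\a\b\lm\leq\a^2\b$) and bounds the constrained sum by the free double sum $\sum_{\a,\b}(\a^2\b+\a\b^2)S_\a S_\b=2\|u\|_1^2\|u\|_{1/2}^2$; no interpolation or convolution estimate is needed. Second, for the small denominators in terms~1 and~3 the paper uses the single-line bound $\tfrac{1}{|\a-\lm|}\leq 3\a$ (valid for all $\a\neq\lm$ in $\Gamma$, any $d$), which replaces your three-range splitting and counting argument. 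With that bound the singular factor is absorbed directly into $\a^3$, and then one of the \emph{two} available copies of $S_\a$ (term~1) or $S_\lm$ (term~3) is traded for $\|u\|_p^2$ via $S_\g\g^{2p}\leq\|u\|_p^2$, yielding $|1^{st}|\lesssim\|u\|_{1/2}^2\|u\|_1^2 S_\lm$ and $|3^{rd}|\lesssim\|u\|_{3/2}^2\|u\|_1^2 S_\lm$ (with $\|u\|_1^4$ in $d=1$ using $|\a-\lm|\geq 1$). Your route works, but the paper's is both shorter and slightly sharper in the regularity it requires.
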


\begin{proof}
The estimate for $R_{S_\lm}$ follows from Lemma \ref{lemma:W geq 7 new}  
and the elementary inequality $(|u_k| + |u_{-k}|)^2 \leq 2 (|u_k|^2 + |u_{-k}|^2)$.
To estimate $R_{B_\lm}$, we note that 
\begin{equation} \label{3105.4}
\frac{1}{|\a-\lm|} \leq 3 \a \quad \ \forall \a, \lm \in \Gamma, \ \a \neq \lm
\end{equation}
in any dimension $d \geq 1$; 
for $d=1$ one has the stronger lower bound $|\a-\lm| \geq 1$ for $\a \neq \lm$.
Bound \eqref{3105.4} is not difficult to prove 
(see the proof of Lemma 4.1 in \cite{K old}). 
One also has the elementary inequality 
\begin{equation} \label{3105.5}
S_\a \a^{2p} 
\leq \sum_{\b \in \Gamma} S_\b \b^{2p} 
= \| u \|_p^2
\quad \ \forall \a \in \Gamma, \ \forall p \geq 0.
\end{equation}
Let $1^{st}, \ldots, 5^{th}$ denote the five sums in the r.h.s.\ of \eqref{3005.5}. 
By \eqref{3105.4}, \eqref{3105.1}, one has
\[ 
| 1^{st} | 
\lesssim \sum_\a |B_\a|^2 |B_\lm| \a^3 
\lesssim \sum_\a S_\a^2 \a^3 S_\lm
\lesssim \sum_\a S_\a \a \| u \|_1^2 S_\lm
\lesssim \| u \|_{\frac12}^2 \| u \|_1^2 S_\lm
\] 
because, by \eqref{3105.5}, $S_\a \a^2 \leq \| u \|_1^2$. 
Similarly, by \eqref{3105.4}, \eqref{3105.1}, 
\[ 
| 3^{rd} | 
\lesssim \sum_\a S_\a \a^3 S_\lm^2 \lm^2  
\lesssim \sum_\a S_\a \a^3 \| u \|_1^2 S_\lm
\lesssim \| u \|_{\frac32}^2 \| u \|_1^2 S_\lm
\] 
because, by \eqref{3105.5}, $S_\lm \lm^2 \leq \| u \|_1^2$.
In dimension $d=1$, 
using the lower bound $|\a-\lm| \geq 1$ instead of \eqref{3105.4},
one also has $|3^{rd}| \lesssim \| u \|_1^4 S_\lm$.
By \eqref{3105.4}, 
\[
|2^{nd}| 
\lesssim \sum_{\begin{subarray}{c} \a,\b \\  \a + \b = \lm \end{subarray}} 
S_\a S_\b S_\lm \a \b (\a+\b)
\lesssim \sum_{\a,\b} S_\a S_\b S_\lm \a \b (\a+\b)
\lesssim \| u \|_1^2 \| u \|_{\frac12}^2 S_\lm,
\]
and the same estimate also holds for $| 4^{th} |$ 
because $\lm = \a - \b \leq \a$. 
By Lemma \ref{lemma:W geq 7 new}, 
$|5^{th}| \lesssim \| u \|_{m_1}^6 S_\lm$.
Since $\| u \|_{m_1}^6 \leq \d^2 \| u \|_{m_1}^4$, 
the sum of the five terms gives the estimate for $R_{B_\lm}$.
\end{proof}

In Lemma \ref{lemma:0106.1} we observe that, 
in a time interval of order $\| u(0) \|_{m_1}^{-4}$, 
each single $S_\lm$ has a growth factor of at most order $1$.
First, we recall a result from \cite{K old}-\cite{K mem}.

\begin{lemma} \label{lemma:0106.3}
There exist universal positive constants 
$\d_1, C_1, A_1$ such that, for every initial data $(u_0, v_0) \in H^{m_1}_0(\T^d, c.c.)$
in the ball 
$\| u_0 \|_{m_1} \leq \d_1$, 
the Cauchy problem
\begin{equation} \label{0106.4}
\pa_t (u,v) = W(u,v), \quad \ 
(u,v)(0) = (u_0, v_0)
\end{equation}
has a unique solution 
$(u,v) \in C([0,T_1], H^{m_1}_0(\T^d, c.c.))$
on the time interval $[0,T_1]$, with 
\begin{equation} \label{0106.2}
\| u(t) \|_{m_1} \leq C_1 \| u_0 \|_{m_1} \leq \d 
\quad \ \forall t \in [0, T_1],
\end{equation}
where 
\begin{equation*} 
T_1 = A_1 \| u_0 \|_{m_1}^{-4}
\end{equation*}
and $\d$ is the constant in Lemma \ref{lemma:W geq 7 new}.
\end{lemma}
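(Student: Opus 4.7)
The plan is to prove this result by a standard bootstrap argument for the transformed Cauchy problem \eqref{0106.4}. Local well-posedness in $H^{m_1}_0(\T^d, c.c.)$ is a consequence of the Arosio-Panizzi theory for the Kirchhoff equation together with the boundedness of the normal form transformation $\Phi = \Phi^{(1)} \circ \cdots \circ \Phi^{(5)}$ near the origin (see the appendix); hence the solution exists on a short interval, and I only need an a priori bound of the form $\|u(t)\|_{m_1} \leq C_1 \|u_0\|_{m_1} \leq \d$ to extend it up to $T_1$.

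The core step is the energy inequality
\[
\frac{d}{dt} \|u(t)\|_{m_1}^2 \leq C \|u(t)\|_{m_1}^6.
\]
I would derive it from the identity $\|u\|_{m_1}^2 = \sum_{\lm \in \Gamma} \lm^{2m_1} S_\lm$ in \eqref{3105.2}, the effective equation \eqref{3005.1} for $\pa_t S_\lm$, and the remainder bound of Lemma \ref{lemma:3005.3}. The contribution of $R_{S_\lm}$ is
\[
\Big| \sum_\lm \lm^{2m_1} R_{S_\lm} \Big| \leq C \|u\|_{m_1}^6 \sum_\lm \lm^{2m_1} S_\lm = C \|u\|_{m_1}^8,
\]
which is subdominant. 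For the explicit quintic sums in \eqref{3005.1}, multiplication by $\lm^{2m_1}$ and the bound $|B_\lm| \leq S_\lm$ from \eqref{3105.1} reduces matters to estimating $\sum_{\a+\b=\lm} S_\a S_\b S_\lm \, \a\b\lm^{2m_1+1}$ and the analogous sum with $\a-\b=\lm$ (in which case $\lm \leq \a$). Distributing the weight $\lm^{2m_1+1}\a\b$ among the three indices and repeatedly applying the elementary inequality $S_\a \a^{2p} \leq \|u\|_p^2$ from \eqref{3105.5}, exactly in the spirit of the estimates for $R_{B_\lm}$ in the proof of Lemma \ref{lemma:3005.3}, gives the desired $\|u\|_{m_1}^6$ bound. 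For $d=1$ with $m_1=1$ the interpolation just closes; for $d \geq 2$ the larger value $m_1=2$ leaves ample room.

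With the energy inequality in hand, the bootstrap is classical. Setting $y(t) := \|u(t)\|_{m_1}^2$, the ODE inequality $y' \leq C y^3$ integrates to $y(t) \leq y(0) / \sqrt{1 - 2Ct\, y(0)^2}$, so on the interval $[0,T_1]$ with $T_1 := A_1 y(0)^{-2} = A_1 \|u_0\|_{m_1}^{-4}$ and $A_1$ small enough, one has $y(t) \leq 2 y(0)$. Setting $C_1 := \sqrt{2}$ and then $\d_1 := \d/C_1$ guarantees $\|u(t)\|_{m_1} \leq C_1 \|u_0\|_{m_1} \leq \d$ throughout, so that the hypotheses of Lemma \ref{lemma:W geq 7 new} and Lemma \ref{lemma:3005.3} remain valid on the whole interval and the bootstrap closes.

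The main obstacle — and the point that is actually carried out in \cite{K old}, \cite{K mem} — is the careful Sobolev weight bookkeeping for the quintic terms: although each individual estimate follows the pattern of Lemma \ref{lemma:3005.3}, several triplet constraints $\a \pm \b = \lm$ appear, and each requires the right interpolation to close at the critical index $m_1$. Once this is absorbed from the prior work, everything else is a textbook continuation argument.
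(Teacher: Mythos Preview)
Your argument is correct, but it takes a different route from the paper's. The paper does not run an energy estimate on the final system \eqref{3101.16} at all: it simply quotes the $\e^{-4}$ existence result already established in \cite{K old} for the intermediate system $\pa_t(w,z)=X^+(w,z)$ (obtained after $\Phi^{(1)}\circ\cdots\circ\Phi^{(4)}$), and then transports that result through the fifth transformation using the bounds on $\Phi^{(5)}$ and $(\Phi^{(5)})^{-1}$ from Lemmas \ref{lemma:ottobrata.5} and \ref{lemma:Phi5 inv}. So the paper's proof is essentially a two-line citation plus a change of variable.

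Your approach is more self-contained: you derive the energy inequality directly for $W$ from the effective equation \eqref{3005.1} and the bound on $R_{S_\lm}$ in Lemma \ref{lemma:3005.3}. One simplification worth noting: you do not actually need to ``distribute the weight $\lm^{2m_1+1}\a\b$'' across the triplet. The estimates in the proof of Lemma \ref{lemma:0106.1} already give the pointwise bound $|\pa_t S_\lm| \leq C\|u\|_{m_1}^4 S_\lm$ (for $\lm$ fixed, with no $\lm$-weight involved), since $\a\b\lm \leq \a^2\b + \a\b^2$ when $\a+\b=\lm$ and $\a\b\lm \leq \a^2\b$ when $\a-\b=\lm$; multiplying by $\lm^{2m_1}$ and summing then gives $\frac{d}{dt}\|u\|_{m_1}^2 \leq C\|u\|_{m_1}^6$ immediately, with no interpolation subtlety at $m_1$. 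What your argument buys is independence from \cite{K old}; what the paper's argument buys is brevity. Note also that with your ordering the subsequent Lemma \ref{lemma:0106.1} becomes a corollary of what you have already computed inside the proof of Lemma \ref{lemma:0106.3}.
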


\begin{proof} 
Let $\Phi^{(1)}, \ldots, \Phi^{(4)}$ be the transformations in 
\eqref{1912.2}, 
\eqref{def Phi2}, 
\eqref{def Phi3}, 
\eqref{def Phi4}. 
In \cite{K old} (see ``Proof of Theorem 1.1'', just above the references in \cite{K old}) 
it is proved that the system 
\[
\pa_t (w,z) = X^+(w,z),
\]
namely the system obtained applying $\Phi^{(1)} \circ \cdots \circ \Phi^{(4)}$ 
to the original Kirchhoff equation, has local existence and uniqueness 
for initial data $(w_0, \overline{w_0}) \in H^{m_0}_0(\T^d, c.c.)$ 
in the ball $\| w_0 \|_{m_0} \leq \d_0$, 
and the solution $w(t)$ is well-defined on the time interval 
$[0, T_0]$, with $T_0 = A_0 \| w_0 \|_{m_0}^{-4}$.  
Moreover $\| w(t) \|_{m_0} \leq C_0 \| w_0 \|_{m_0}$ 
on $[0, T_0]$, and if, in addition, $w_0 \in H^s$ for some $s > m_0$, 
then $\| w(t) \|_s \leq C_0 \| w_0 \|_s$ on $[0, T_0]$;
$\d_0, A_0, C_0$ are universal constants.  

Then consider $\Phi^{(5)}$ in \eqref{def Phi5}. 
One has 
$\| \Phi^{(5)}(u,v) \|_{m_1} \leq (1 + C \| u \|_{m_1}^4) \| u \|_{m_1}$
for all $(u,v) \in H^{m_1}_0(\T^d,c.c.)$ 
(see Lemma \ref{lemma:ottobrata.5}),
and the inverse map $(\Phi^{(5)})^{-1}$ is well-defined on the ball $\| w \|_{m_1} \leq \d_0'$, 
with
\[
\| (\Phi^{(5)})^{-1}(w,z) \|_{m_1} \leq 2 \| w \|_{m_1}
\]
for all $\| w \|_{m_1} \leq \d_0'$ (see Lemma \ref{lemma:Phi5 inv}); 
$C, \d_0'$ are universal constants.  
As a consequence, the system $\pa_t (u,v) = W(u,v)$ in \eqref{3101.16},
namely the system obtained applying the transformation $(w,z) = \Phi^{(5)}(u,v)$, 
satisfies the property of the statement, taking $\d_1$ sufficiently small.
\end{proof}

\begin{lemma} \label{lemma:0106.1}
Let $(u_0, v_0) \in H^{m_1}_0(\T^d, c.c.)$, 
$\| u_0 \|_{m_1} \leq \d_1$, with $\d_1$ given in Lemma \ref{lemma:0106.3}. 
Let $(u(t), v(t))$ be the solution of the Cauchy problem \eqref{0106.4}
on the time interval $[0, T_1]$, $T_1 = A_1 \| u_0 \|_{m_1}^{-4}$,
given in Lemma \ref{lemma:0106.3}. 
For every $t \in [0,T_1]$, let $S_\lm(t)$ be the sum defined by \eqref{def SB}.
Then 
\begin{align} 
\label{0106.15}
|\pa_t S_\lm(t)| 
& \leq C \| u_0 \|_{m_1}^4 S_\lm(t),
\\ 
C' S_\lm(0) 
\leq S_\lm(0) e^{- C \| u_0 \|_{m_1}^4 t}
& \leq S_\lm(t) 
\leq S_\lm(0) e^{C \| u_0 \|_{m_1}^4 t} 
\leq C'' S_\lm(0),
\label{0106.11}
\end{align}
for all $t \in [0, T_1]$, 
for all $\lm \in \Gamma$, 
where $C, C', C''$ are universal constants. 
\end{lemma}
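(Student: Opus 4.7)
The plan is to derive a Gronwall-type differential inequality for each superaction $S_\lm$ from the effective equation \eqref{3005.1}, then integrate it over the interval of length $T_1 \sim \| u_0 \|_{m_1}^{-4}$ provided by Lemma \ref{lemma:0106.3}. The main task is therefore to prove the pointwise bound \eqref{0106.15}, after which \eqref{0106.11} follows immediately.

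The existence of the solution, its regularity, and the a priori estimate $\| u(t) \|_{m_1} \le C_1 \| u_0 \|_{m_1} \le \d$ on $[0,T_1]$ are all inherited from Lemma \ref{lemma:0106.3}. On this interval, Lemma \ref{lemma:3005.3} applies, giving $|R_{S_\lm}(t)| \leq C \| u(t) \|_{m_1}^6 S_\lm(t) \leq C \d^2 \| u(t) \|_{m_1}^4 S_\lm(t)$. Thus it remains to control the two explicit cubic-in-$B$ sums in \eqref{3005.1}. Using $|B_\g| \leq S_\g$ (from \eqref{3105.1}), and pulling out the factor $S_\lm$ corresponding to $|B_\lm|$ or $|\overline{B_\lm}|$, I would estimate
\[
\Big| \sum_{\a + \b = \lm} B_\a B_\b \overline{B_\lm} \, \a \b \lm \Big|
\leq S_\lm \sum_{\a + \b = \lm} \a \b \lm \, S_\a S_\b,
\qquad
\Big| \sum_{\a - \b = \lm} B_\a \overline{B_\b} \overline{B_\lm} \, \a \b \lm \Big|
\leq S_\lm \sum_{\a - \b = \lm} \a \b \lm \, S_\a S_\b.
\]
In both sums the constraint forces $\lm \leq \a$ (for $\a-\b=\lm$ directly, and for $\a+\b=\lm$ by symmetrizing in $\a,\b$ and using $\lm \leq 2 \max\{\a,\b\}$). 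This yields $\a \b \lm \lesssim \a^2 \b$, so in each case
\[
\sum \a \b \lm \, S_\a S_\b \lesssim \Big( \sum_\a \a^2 S_\a \Big) \Big( \sum_\b \b S_\b \Big) = \| u \|_1^2 \, \| u \|_{\frac12}^2 \leq \| u \|_{m_1}^4,
\]
since $m_1 \geq 1$. Combined with the remainder estimate, this gives $|\pa_t S_\lm(t)| \leq C \| u(t) \|_{m_1}^4 S_\lm(t) \leq C \, C_1^4 \| u_0 \|_{m_1}^4 S_\lm(t)$, which is \eqref{0106.15} (possibly after renaming the universal constant).

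From \eqref{0106.15}, if $S_\lm(0)>0$ then $S_\lm(t)$ stays positive and $|\pa_t \log S_\lm(t)| \leq C \| u_0 \|_{m_1}^4$, so integration on $[0,t]$ yields the two-sided bound $S_\lm(0) e^{-C \| u_0 \|_{m_1}^4 t} \leq S_\lm(t) \leq S_\lm(0) e^{C \| u_0 \|_{m_1}^4 t}$; if $S_\lm(0)=0$ the inequality $|\pa_t S_\lm| \leq C \| u_0 \|_{m_1}^4 S_\lm$ forces $S_\lm \equiv 0$ and both bounds are trivial. Finally, for $t \in [0, T_1]$ one has $\| u_0 \|_{m_1}^4 t \leq A_1$, so the exponentials are universally bounded, giving $C'' := e^{CA_1}$ and $C' := e^{-CA_1}$. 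The main (only) obstacle is the combinatorial estimate of the convolution-type sums above; the key ingredient is the elementary inequality $\lm \leq 2\max\{\a,\b\}$ under the convolution constraint, which lets us trade the factor $\lm$ for an extra derivative on $\a$, allowing the product of sums to be bounded by Sobolev norms of the order dictated by $m_1$.
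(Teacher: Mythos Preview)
Your proof is correct and follows essentially the same route as the paper: bound the two explicit sums in \eqref{3005.1} by $\|u\|_1^2\|u\|_{\frac12}^2 S_\lm$ via $|B_\g|\le S_\g$ and the convolution constraint (the paper writes $\a\b\lm=\a\b(\a+\b)$ for $\a+\b=\lm$ rather than symmetrizing, but this is the same estimate), invoke Lemma \ref{lemma:3005.3} for $R_{S_\lm}$, pass from $\|u(t)\|_{m_1}$ to $\|u_0\|_{m_1}$ via \eqref{0106.2}, and then apply Gronwall.
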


\begin{proof}
Since $(u,v)$ solves \eqref{0106.4}, 
$S_\lm$ satisfies equation \eqref{3005.1} for all $t \in [0, T_1]$. 
Moreover, by \eqref{0106.2}, $u(t)$ remains in the ball $\| u \|_{m_1} \leq \d$ 
on the time interval $[0, T_1]$, and therefore the estimates of previous lemmas apply. 
Then, with estimates similar to those in the proof of Lemma \ref{lemma:3005.3}, one has 
\begin{align*}
\Big| \sum_{\begin{subarray}{c} \a,\b \in \Gamma \\  \a + \b = \lm \end{subarray}} 
( B_\a B_\b \overline{B_\lm} - \overline{B_\a} \overline{B_\b} B_\lm ) \a \b \lm \Big| 
& \lesssim \sum_{\a,\b \in \Gamma} S_\a S_\b S_\lm \a \b (\a + \b) 
\lesssim \| u \|_1^2 \| u \|_{\frac12}^2 S_\lm,
\notag \\ 
\Big| \sum_{\begin{subarray}{c} \a,\b \in \Gamma \\  \a - \b = \lm \end{subarray}}
( B_\a \overline{B_\b} \overline{B_\lm} - \overline{B_\a} B_\b B_\lm ) \a \b \lm \Big|
& \lesssim \sum_{\a,\b \in \Gamma} S_\a S_\b S_\lm \a^2 \b 
\lesssim \| u \|_1^2 \| u \|_{\frac12}^2 S_\lm,
\notag \\ 
|R_{S_\lm}| 
& \lesssim \| u \|_{m_1}^6 S_\lm.
\end{align*}
Hence, by \eqref{3005.1}, 
\[
|\pa_t S_\lm(t)| \leq C \| u(t) \|_{m_1}^4 S_\lm(t) 
\quad \ \forall t \in [0, T_1],
\]
and therefore, by \eqref{0106.2}, we obtain \eqref{0106.15}. 
Then \eqref{0106.11} follows by Gronwall's inequality. 
\end{proof}

Our goal is to improve the growth factor of $S_\lm$, 
whose evolution is driven by equation \eqref{3005.1}. 
Thus we analyze the terms in \eqref{3005.1}. 
Denote
\begin{equation}\label{def Z}
Z_{\a\b\lm} := B_\a B_\b \overline{B_\lm}.
\end{equation}
Using \eqref{3105.8}, we calculate
\begin{align} 
\pa_t Z_{\a\b\lm}
& = -2i (1 + \mP) \Big(\a + \b - \lm +\frac14 (\a^2 S_\a + \b^2 S_\b - \lm^2 S_\lm)\Big) Z_{\a\b\lm}
+ \widetilde R_{Z_{\a\b\lm}}
\label{3105.9}
\end{align}
where 
\begin{align} 
\widetilde R_{Z_{\a\b\lm}}
& := 
R_{B_\a} B_\b \overline{B_\lm} 
+ B_\a R_{B_\b} \overline{B_\lm} 
+ B_\a B_\b \overline{R_{B_\lm}}.
\label{3105.10}
\end{align}
For $\a + \b = \lm$,
isolating the first nontrivial contribution from terms of higher homogeneity orders, 
one has 
\begin{equation}  \label{3105.11}
\pa_t Z_{\a\b\lm}
= - \frac{i}{2} (\a^2 S_\a + \b^2 S_\b - \lm^2 S_\lm) Z_{\a\b\lm} 
+ R_{Z_{\a\b\lm}}
\end{equation}
where 
\begin{equation}  \label{3105.12}
R_{Z_{\a\b\lm}}
:= - \frac{i}{2} \mP (\a^2 S_\a + \b^2 S_\b - \lm^2 S_\lm) Z_{\a\b\lm} 
+ \widetilde R_{Z_{\a\b\lm}}.
\end{equation}

\begin{lemma} \label{lemma:3105.13}
Let $(u,v) \in H^{m_1}_0(\T^d,c.c.)$ with $\| u \|_{m_1} \leq \d$, 
where $\d$ is the constant in Lemma \ref{lemma:W geq 7 new}. 
Then for all $\a,\b,\lm \in \Gamma$ with $\a+\b=\lm$
the remainder defined in \eqref{3105.12} satisfies 
\begin{equation} \label{0106.10}
|R_{Z_{\a\b\lm}}| \leq C \| u \|_{m_1}^4 S_\a S_\b S_\lm,
\end{equation}
where $C>0$ is a universal constant.
\end{lemma}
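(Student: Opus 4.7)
The plan is to split $R_{Z_{\alpha\beta\lambda}}$ according to the definition \eqref{3105.12} into (i) the correction $-\frac{i}{2}\mathcal{P}(\a^2 S_\a + \b^2 S_\b - \lm^2 S_\lm)Z_{\a\b\lm}$ coming from the overall $(1+\mathcal{P})$ factor in \eqref{3105.8}, and (ii) the piece $\widetilde{R}_{Z_{\alpha\beta\lambda}}$ given by \eqref{3105.10}. Each piece will be bounded by the triangle inequality using only three tools already available: the pointwise inequality $|B_\g|\leq S_\g$ from \eqref{3105.1}, the bound $|R_{B_\g}|\leq C\|u\|_{m_1}^4 S_\g$ from Lemma \ref{lemma:3005.3}, and the single-sphere moment inequality \eqref{3105.5}.

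For piece (ii), each of the three summands of $\widetilde{R}_{Z_{\a\b\lm}}$ is a product of a single $R_{B_\g}$ with two factors $B_{\g'}$. Applying Lemma \ref{lemma:3005.3} to the $R_{B_\g}$ factor and \eqref{3105.1} to the other two factors gives $|R_{B_\a}B_\b \overline{B_\lm}|\leq C\|u\|_{m_1}^4 S_\a S_\b S_\lm$, and similarly for the other two summands, so $|\widetilde{R}_{Z_{\a\b\lm}}|\lesssim \|u\|_{m_1}^4 S_\a S_\b S_\lm$.

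For piece (i), I use $|Z_{\a\b\lm}|=|B_\a B_\b \overline{B_\lm}|\leq S_\a S_\b S_\lm$ from \eqref{3105.1}, and control the middle factor by three applications of \eqref{3105.5} with $p=1$:
\[
|\a^2 S_\a + \b^2 S_\b - \lm^2 S_\lm|\leq \a^2 S_\a + \b^2 S_\b + \lm^2 S_\lm \leq 3\|u\|_1^2 \leq 3\|u\|_{m_1}^2.
\]
What remains is the scalar factor $\mathcal{P}$, which is the Kirchhoff-type multiplier produced by the normal-form construction (essentially a weighted linear combination of the superactions $\lm^2 S_\lm$, coming from $\int_{\T^d}|\gr u|^2$). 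In the smallness ball $\|u\|_{m_1}\leq\d$ the appendix gives $|\mathcal{P}|\lesssim \|u\|_{m_1}^2$, and combining these three bounds yields $|\text{piece (i)}|\lesssim \|u\|_{m_1}^4 S_\a S_\b S_\lm$.

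Adding the two contributions produces the desired estimate $|R_{Z_{\a\b\lm}}|\leq C\|u\|_{m_1}^4 S_\a S_\b S_\lm$. I do not expect any genuine obstacle here: the estimate is a bookkeeping exercise assembling inequalities already proved in Lemma \ref{lemma:3005.3} and used in its own proof. The only step worth double-checking is the universal bound $|\mathcal{P}|\lesssim \|u\|_{m_1}^2$ from the appendix, so that the constant in the final inequality is genuinely universal and independent of the triple $(\a,\b,\lm)$.
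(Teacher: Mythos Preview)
Your proof is correct and follows essentially the same route as the paper: split $R_{Z_{\a\b\lm}}$ into the $\mP$-piece and $\widetilde R_{Z_{\a\b\lm}}$, bound the latter via \eqref{3105.1} and Lemma \ref{lemma:3005.3}, and bound the former using $|Z_{\a\b\lm}|\leq S_\a S_\b S_\lm$, $|\a^2 S_\a+\b^2 S_\b-\lm^2 S_\lm|\leq 3\|u\|_1^2$, and $|\mP|\lesssim\|u\|_{\frac12}^2$. The only refinement in the paper is that it makes explicit that $\mP$ here stands for $\mP(\Phi^{(5)}(u,v))$ and invokes \eqref{stima mP} together with \eqref{stima mM} to justify the bound $\mP(\Phi^{(5)}(u,v))\leq C\|u\|_{\frac12}^2$; this is exactly the ``double-check'' you flagged, and it goes through.
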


\begin{proof}
By \eqref{3105.1}, \eqref{lemma:3005.3}, one has immediately 
$| \widetilde R_{Z_{\a\b\lm}} | \leq C \| u \|_{m_1}^4 S_\a S_\b S_\lm$. 
By \eqref{stima mP}, \eqref{stima mM}, for $\| u \|_{m_1} \leq \d$ one has 
\[
0 \leq \mP(\Phi^{(5)}(u,v)) 
\leq C \| \Phi^{(5)}(u,v) \|_{\frac12}^2
\leq C \| u \|_{\frac12}^2.
\]
Moreover 
\begin{align}
& | \a^2 S_\a + \b^2 S_\b - \lm^2 S_\lm | 
\leq 3\sum_{\g \in \Gamma} \g^2 S_\g 
= 3\| u \|_1^2,
\qquad
 |Z_{\a\b\lm}| 
\leq S_\a S_\b S_\lm,
\label{3105.19}
\end{align}
and the first term in the r.h.s.\ of \eqref{3105.12} 
is also bounded by $C \| u \|_{m_1}^4 S_\a S_\b S_\lm$. 
\end{proof}

In the next proposition we prove that,
if the term $(\a^2 S_\a + \b^2 S_\b - \lm^2 S_\lm)$ in \eqref{3105.11} 
is bounded away from zero 
with a quantitative lower bound (``nonresonance condition''),
the growth factor of each single $S_\lm$ is smaller than its a priori estimate.

\begin{proposition} \label{key}
There exist universal constants $A_*, K_* > 0$ with the following properties.
Let $0 < c_0 \leq 1$. 
Let $\rag > 0$, $(u_0, v_0) \in H^{m_1}_0(\T^d,c.c.)$, with 
\begin{equation} \label{1106.8}
\| u_0 \|_{m_1} \leq \rag \leq \d_1, 
\end{equation}
where $\d_1$ is given in Lemma \ref{lemma:0106.3}. 
Let $(u,v)$ be the solution of the Cauchy problem \eqref{0106.4} 
on the interval $[0, T_1]$ given by Lemma \ref{lemma:0106.3}, 
and let $S_\lm(t)$ be its superactions at time $t$. Let 
\begin{equation*} 
\Gamma_0 := \{ \lm \in \Gamma : S_\lm(0) = 0 \}, 
\quad \ 
\Gamma_1 := \{ \lm \in \Gamma : S_\lm(0) > 0 \} = \Gamma \setminus \Gamma_0.
\end{equation*}
Assume that, at time $t=0$, the datum $u_0$ satisfies the 
``nonresonance condition'' 
\begin{equation} \label{hyp.nonres}
\begin{aligned}
& \big| \a^2 S_\a(0) + \b^2 S_\b(0) - \lm^2 S_\lm(0) \big| 
\geq c_0 \big( \a^2 S_\a(0) + \b^2 S_\b(0) + \lm^2 S_\lm(0) \big)
\\ 
& \, \forall \a,\b,\lm \in \Gamma_1, 
\ \ \a + \b = \lm.
\end{aligned}
\end{equation}
Let 
\begin{equation} \label{def.T2}
T_* := A_* \rag^{-4} c_0.
\end{equation}
One has $T_* \leq T_1$, 
\begin{align} \label{0106.9}
|S_\lm(t) - S_\lm(0)| & \leq K_* c_0^{-2} \rag^2 S_\lm(0) 
\quad \ \forall t \in [0,T_*], 
\ \ \forall \lm \in \Gamma,
\\ 
\label{1109.9}
\| u(t) \|_{m_1} 
& \leq (1 + K_* c_0^{-2} \rag^2) \| u_0 \|_{m_1}
\quad \ \forall t \in [0,T_*], 
\end{align}
and 
\begin{equation} \label{tesi.nonres}
\begin{aligned}
& \big| \a^2 S_\a(t) + \b^2 S_\b(t) - \lm^2 S_\lm(t) \big| 
\geq c_1 \big( \a^2 S_\a(t) + \b^2 S_\b(t) + \lm^2 S_\lm(t) \big)
\\ 
& \, \forall t \in [0,T_*], \quad 
\, \forall \a,\b,\lm \in \Gamma_1, 
\ \ \a + \b = \lm,
\end{aligned}
\end{equation}
where 
\begin{equation} \label{1106.4}
c_1 = c_0 (1 - K_* c_0^{-2} \rag^2).
\end{equation}
\end{proposition}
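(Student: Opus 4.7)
The plan is a bootstrap argument on $[0,T_*]$: define $T_{\max}$ as the largest time $T \in [0,T_*]$ on which a weakened form of \eqref{0106.9} (say with $2K_*$ in place of $K_*$) and a weakened nonresonance (say with $c_0/2$ in place of $c_1$) both hold, and close the bootstrap by deriving the strict bounds on $[0, T_{\max}]$. The inclusion $T_* \le T_1$ is immediate from $T_* = A_* c_0 \rho^{-4} \le A_1 \rho^{-4} = T_1$ provided $A_* \le A_1$ (using $c_0 \le 1$), so Lemma \ref{lemma:0106.3} supplies the solution. Note that $\Gamma_0$ is forward invariant (if $S_\lambda(0)=0$ then $B_\lambda(0)=0$, and the RHS of \eqref{3005.1}, \eqref{3105.8} vanishes whenever $S_\lambda = B_\lambda = 0$), so \eqref{0106.9} is trivial for $\lambda \in \Gamma_0$ and the analysis reduces to $\lambda \in \Gamma_1$.

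The core step is to estimate $S_\lambda(t) - S_\lambda(0)$ by integrating \eqref{3005.1} and performing \emph{integration by parts in time} on each oscillating factor. Using \eqref{3105.11} to write $Z_{\alpha\beta\lambda} = (2i/\Omega_{\alpha\beta\lambda})(\partial_s Z_{\alpha\beta\lambda} - R_{Z_{\alpha\beta\lambda}})$, the integration by parts yields
\[
\int_0^t Z_{\alpha\beta\lambda}\,ds = \Big[\frac{2i\, Z_{\alpha\beta\lambda}}{\Omega_{\alpha\beta\lambda}}\Big]_0^t + \int_0^t \frac{2i\, Z_{\alpha\beta\lambda}\, \partial_s \Omega_{\alpha\beta\lambda}}{\Omega_{\alpha\beta\lambda}^2}\,ds - \int_0^t \frac{2i\, R_{Z_{\alpha\beta\lambda}}}{\Omega_{\alpha\beta\lambda}}\,ds.
\]
The second sum in \eqref{3005.1} (over $\alpha - \beta = \lambda$) will be treated identically after the relabeling $(\tilde\alpha,\tilde\beta,\tilde\lambda) := (\beta,\lambda,\alpha)$, since $\tilde\alpha + \tilde\beta = \tilde\lambda$ and $\alpha^2 S_\alpha - \beta^2 S_\beta - \lambda^2 S_\lambda = -(\tilde\alpha^2 S_{\tilde\alpha} + \tilde\beta^2 S_{\tilde\beta} - \tilde\lambda^2 S_{\tilde\lambda})$, so the nonresonance hypothesis applies to the new phase. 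Each of the three IBP pieces will be dominated, using $|Z_{\alpha\beta\lambda}| \le S_\alpha S_\beta S_\lambda$ (from \eqref{3105.1}), $|R_{Z_{\alpha\beta\lambda}}| \lesssim \rho^4 S_\alpha S_\beta S_\lambda$ (Lemma \ref{lemma:3105.13}), $|\partial_s \Omega_{\alpha\beta\lambda}| \lesssim \rho^4 (\alpha^2 S_\alpha + \beta^2 S_\beta + \lambda^2 S_\lambda)$ (from \eqref{0106.15}) and the bootstrap nonresonance $|\Omega_{\alpha\beta\lambda}| \ge (c_0/2)(\alpha^2 S_\alpha + \beta^2 S_\beta + \lambda^2 S_\lambda)$, together with $T_* \rho^4 = A_* c_0$, by $S_\alpha S_\beta S_\lambda / [c_0 (\alpha^2 S_\alpha + \beta^2 S_\beta + \lambda^2 S_\lambda)]$; the direct $R_{S_\lambda}$-contribution $\int_0^{T_*} |R_{S_\lambda}|\,ds \lesssim T_*\rho^6 S_\lambda = c_0 \rho^2 S_\lambda$ is subdominant.

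The main obstacle I expect is the summation bound
\[
\sum_{\alpha+\beta=\lambda} \frac{\alpha\beta\lambda\, S_\alpha S_\beta}{\alpha^2 S_\alpha + \beta^2 S_\beta + \lambda^2 S_\lambda} \lesssim \rho^2 \qquad \text{uniformly in } \lambda,
\]
which I would prove by noting that $\alpha + \beta = \lambda$ forces $\max(\alpha, \beta) \ge \lambda/2$: for $\beta \ge \alpha$ one has $\beta^2 S_\beta \ge (\lambda\beta/2) S_\beta$, so the summand is $\le 2\alpha S_\alpha$, and $\sum_\alpha \alpha S_\alpha = \|u\|_{1/2}^2 \le \|u\|_{m_1}^2 \lesssim \rho^2$; the case $\alpha \ge \beta$ is symmetric. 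Without the nonresonance-provided denominator there would be an excess factor of $\lambda$ incompatible with any uniform bound in $\lambda$; the nonresonance is calibrated precisely to kill it. Inserting this summation bound yields $|S_\lambda(t) - S_\lambda(0)| \lesssim c_0^{-1} \rho^2 S_\lambda(0) \lesssim c_0^{-2} \rho^2 S_\lambda(0)$ (using $c_0 \le 1$), which is \eqref{0106.9}.

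Closing the bootstrap is routine: multiplying \eqref{0106.9} by $\lambda^{2m_1}$ and summing over $\lambda$ gives \eqref{1109.9}; preservation \eqref{tesi.nonres} follows since \eqref{0106.9} implies $|\Omega_{\alpha\beta\lambda}(t) - \Omega_{\alpha\beta\lambda}(0)| \le K_* c_0^{-2} \rho^2 X_0$ with $X_0 := \alpha^2 S_\alpha(0) + \beta^2 S_\beta(0) + \lambda^2 S_\lambda(0)$ and $X(t) \le (1 + K_* c_0^{-2} \rho^2)X_0$, which combined with $|\Omega_{\alpha\beta\lambda}(0)| \ge c_0 X_0$ yields $c_1 = c_0(1 - K_*' c_0^{-2} \rho^2)$ after absorbing the ratio into a possibly enlarged universal constant. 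All improved constants strictly beat those defining $T_{\max}$, so by continuity $T_{\max} = T_*$, completing the proof.
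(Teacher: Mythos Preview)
Your core argument---integrating \eqref{3005.1}, writing $Z_{\a\b\lm}$ via \eqref{3105.11}, integrating by parts, and summing using $\a\b\lm \leq \a^2\b + \a\b^2$ against the denominator $\a^2 S_\a + \b^2 S_\b + \lm^2 S_\lm$---is exactly the paper's approach, and your summation bound and relabeling of the $\a-\b=\lm$ sum are correct.

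The one structural difference is that the bootstrap is unnecessary, and as written it does not quite close. The paper obtains the weakened nonresonance $|\om_{\a\b\lm}(t)| \geq \tfrac{c_0}{2}\Om_{\a\b\lm}(0)$ on $[0,T_*]$ \emph{directly} from Lemma~\ref{lemma:0106.1}: the crude estimate \eqref{0106.15} gives $|\pa_t \om_{\a\b\lm}| \leq A\rho^4 \Om_{\a\b\lm}(0)$, hence $|\om_{\a\b\lm}(t) - \om_{\a\b\lm}(0)| \leq A\rho^4 t\,\Om_{\a\b\lm}(0) \leq \tfrac{c_0}{2}\Om_{\a\b\lm}(0)$ for $t \leq T_* := \tfrac{c_0}{2A\rho^4}$. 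With this lower bound in hand (and $S_\lm(t) \sim S_\lm(0)$ from \eqref{0106.11}), the integration by parts runs once and yields \eqref{0106.9}--\eqref{tesi.nonres} with no circularity.

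Your bootstrap, by contrast, assumes the nonresonance with $c_0/2$ and then tries to recover a constant $c_1 = c_0(1 - K_*' c_0^{-2}\rho^2) > c_0/2$; this requires $K_*' c_0^{-2}\rho^2 < \tfrac12$, which the proposition does \emph{not} assume (it only requires $\rho \leq \d_1$, $c_0 \leq 1$). So the bootstrap fails to close when $\rho/c_0$ is not small. This is easy to repair---either note that in that regime \eqref{0106.9}--\eqref{tesi.nonres} follow trivially from \eqref{0106.11} and $c_1 \leq 0$, or simply drop the bootstrap in favor of the direct argument above---but it is worth knowing that the scaffolding is both unneeded and slightly leaky.
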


\begin{proof}
%
For $\a,\b,\lm \in \Gamma$ with $\a + \b = \lm$ we denote 
\begin{equation} \label{def om}
\begin{aligned} 
\om_{\a \b \lm} & := \a^2 S_\a + \b^2 S_\b - \lm^2 S_\lm, 
\\ 
\Om_{\a \b \lm} & := \a^2 S_\a + \b^2 S_\b + \lm^2 S_\lm.
\end{aligned}
\end{equation}
Since $(u,v)$ solves \eqref{0106.4} on $[0, T_1]$, 
all $S_\lm(t)$, and therefore all $\om_{\a\b\lm}(t)$, $\Om_{\a\b\lm}(t)$, 
are defined for $t \in [0,T_1]$. 
From \eqref{0106.11}, for all $t \in [0,T_1]$ one has 
\begin{equation} \label{0906.9}
S_\lm(t) > 0 \ \text{ if} \ \lm \in \Gamma_1, 
\quad 
S_\lm(t) = 0 \ \text{ if} \ \lm \in \Gamma_0
\end{equation}
(the Fourier support is invariant for the Kirchhoff equation).
For $\a, \b, \lm \in \Gamma_1$ with $\a+\b=\lm$, 
by assumption \eqref{hyp.nonres} one has 
\begin{equation} \label{0906.4}
|\om_{\a\b\lm}(0)| \geq c_0 \Om_{\a\b\lm}(0).
\end{equation}
Using \eqref{0106.15}, \eqref{1106.8}, \eqref{0106.11}, 
for all $t \in [0,T_1]$ one has 
\begin{align}
|\pa_t \om_{\a \b \lm}(t)| 
& = |\a^2 \pa_t S_\a(t) + \b^2 \pa_t S_\b(t) - \lm^2 \pa_t S_\lm(t)|
\notag \\ & \leq \a^2 |\pa_t S_\a(t)| + \b^2 |\pa_t S_\b(t)| + \lm^2 |\pa_t S_\lm(t)|
\notag \\ & \leq C \| u_0 \|_{m_1}^4 (\a^2 S_\a(t) + \b^2 S_\b(t) + \lm^2 S_\lm(t))
\leq C \rag^4 \Om_{\a\b\lm}(t)
\leq A \rag^4 \Om_{\a\b\lm}(0)
\label{0906.10}
\end{align}
where $A>0$ is a universal constant. 
Thus 
\begin{equation} \label{0906.5}
|\om_{\a\b\lm}(t) - \om_{\a\b\lm}(0)|
\leq \int_0^t |\pa_t \om_{\a\b\lm}(s)| \, ds
\leq A \rag^4 t \,\Om_{\a\b\lm}(0).
\end{equation}
By \eqref{0906.4}, \eqref{0906.5} one has
\begin{align*} 
|\om_{\a\b\lm}(t)| 
& \geq |\om_{\a\b\lm}(0)| - |\om_{\a\b\lm}(t) - \om_{\a\b\lm}(0)|
\geq \big( c_0 - A \rag^4 t \big) \, \Om_{\a\b\lm}(0).
\end{align*}
Therefore 
\begin{equation} \label{time.nonres}
|\om_{\a\b\lm}(t)| \geq \frac{c_0}{2} \, \Om_{\a\b\lm}(0) 
\quad \ \forall t \in [0,T_*], 
\ \  \a, \b, \lm \in \Gamma_1, 
\ \ \a + \b = \lm,
\end{equation}
where
\begin{equation} \label{0906.7}
T_* := A_* c_0 \rag^{-4}, 
\quad \ 
A_* := \min \Big\{ A_1, \frac{1}{2A} \Big\} 
\end{equation}
and $A_1$ is given by Lemma \ref{lemma:0106.3},
so that
\[
T_* \leq T_1 = A_1 \| u_0 \|_{m_1}^{-4},
\quad \ 
(c_0 - A \rag^4 T_*) \geq \tfrac12 c_0.
\]

Recalling \eqref{3005.1},
to analyze the difference $S_\lm(T) - S_\lm(0)$ 
we study the integral of the imaginary part of $Z_{\a\b\lm}$ on $[0,T]$, 
for any $T \in [0,T_*]$, and $\a,\b,\lm \in \Gamma$ with $\a + \b = \lm$.
Let
\begin{equation}\label{def th}
\nth_{\a\b\lm} := {\rm Im} (Z_{\a\b\lm}) = \frac{B_\a B_\b \overline{B_\lm} - \overline{B_\a B_\b}B_\lm}{2i}.
\end{equation}
If at least one among $\a,\b,\lm$ belongs to $\Gamma_0$, 
say $\a \in \Gamma_0$, then, as observed in \eqref{0906.9}, 
the corresponding $S_\a(t)$ is zero for all $t \in [0,T_*]$. 
Hence, by \eqref{3105.1}, 
$B_\a(t)$ is also identically zero on $[0,T_*]$, 
and therefore $\nth_{\a\b\lm}(t) = 0$ 
by its definition \eqref{def th}. 
Thus, for all $T \in [0, T_*]$, 
\begin{equation} \label{0906.8}
\int_0^T \nth_{\a\b\lm}(t) \, dt = 0
\quad \ \text{if }  
\{ \a, \b, \lm \} \cap \Gamma_0 \neq \emptyset.
\end{equation}

It remains to study the case in which $\a, \b, \lm$ all belong to $\Gamma_1$.
Since $(u,v)$ solves \eqref{0106.4} on $[0,T_*] \subseteq [0, T_1]$, 
$Z_{\a\b\lm}$ solves \eqref{3105.11} on the same time interval, 
namely
\[
\pa_t Z_{\a\b\lm}(t) 
= - \frac{i}{2} \, \om_{\a \b \lm}(t) Z_{\a\b\lm}(t) 
+ R_{Z_{\a\b\lm}}(t)
\quad \ \forall t \in [0, T_*].
\]
By the lower bound \eqref{time.nonres}, 
$\om_{\a \b \lm}(t)$ is a nonzero real number, 
therefore we can divide by it and we obtain
\begin{equation} \label{3105.14}
Z_{\a\b\lm} (t)
= \frac{2 i}{\om_{\a \b \lm}(t)} \, 
\big( \pa_t Z_{\a\b\lm}(t) - R_{Z_{\a\b\lm}}(t) \big)
\quad \ \forall t \in [0, T_*].
\end{equation}
By \eqref{3105.14}, and integrating by parts, for all $T \in [0,T_*]$ we have
\begin{align} 
\int_0^T Z_{\a\b\lm}(t) \, dt 
& = 2 i \int_0^T \frac{\pa_t Z_{\a \b \lm}(t)}{\om_{\a \b \lm}(t)} \, dt 
- 2i \int_0^T \frac{R_{Z_{\a \b \lm}}(t)}{\om_{\a \b \lm}(t)} \, dt
\notag \\ 
& = 2i \frac{Z_{\a \b \lm}(T)}{\om_{\a \b \lm}(T)}
- 2i \frac{Z_{\a \b \lm}(0)}{\om_{\a \b \lm}(0)}
+ 2i \int_0^T Z_{\a \b \lm}(t) \frac{\pa_t \om_{\a \b \lm}(t)}{(\om_{\a \b \lm}(t))^2} \, dt
- 2i \int_0^T \frac{R_{Z_{\a \b \lm}}(t)}{\om_{\a \b \lm}(t)} \, dt.
\label{3105.17}
\end{align}
By \eqref{3105.19}, \eqref{time.nonres}, one has 
\begin{equation} \label{0106.12}
\frac{|Z_{\a \b \lm}(0)|}{|\om_{\a \b \lm}(0)|} 
\lesssim \frac{ S_\a(0) S_\b(0) S_\lm(0)}{ c_0 \Om_{\a\b\lm}(0)}.
\end{equation}
By \eqref{3105.19}, \eqref{time.nonres}, then \eqref{0106.11}, 
one also has 
\begin{equation} \label{0106.13}
\frac{|Z_{\a \b \lm}(T)|}{|\om_{\a \b \lm}(T)|} 
\lesssim \frac{S_\a(T) S_\b(T) S_\lm(T)}{c_0 \Om_{\a \b \lm}(0)} 
\lesssim \frac{S_\a(0) S_\b(0) S_\lm(0)}{c_0 \Om_{\a \b \lm}(0)}. 
\end{equation}
By \eqref{0106.10}, \eqref{time.nonres}, 
then \eqref{0106.11}, \eqref{0106.2}, 
and then \eqref{0906.7},  
one has 
\begin{align}
\Big| \int_0^T \frac{R_{Z_{\a \b \lm}}(t)}{\om_{\a \b \lm}(t)} \, dt \Big| 
& \lesssim \int_0^T \frac{\| u(t) \|_{m_1}^4 S_\a(t) S_\b(t) S_\lm(t)}{c_0 \Om_{\a\b\lm}(0)}\, dt
\notag 
\\
& \lesssim \int_0^T \frac{\| u_0 \|_{m_1}^4 S_\a(0) S_\b(0) S_\lm(0)}{c_0 \Om_{\a\b\lm}(0)}\, dt
\lesssim 
\frac{S_\a(0) S_\b(0) S_\lm(0)}{\Om_{\a\b\lm}(0)}.
\label{0106.14}
\end{align}
By \eqref{3105.19},
\eqref{0106.11},
\eqref{0906.10},
\eqref{time.nonres},
\eqref{0906.7}, 
we obtain
\begin{align}
\Big| \int_0^T Z_{\a \b \lm}(t)  
\frac{\pa_t \om_{\a \b \lm}(t)}{(\om_{\a \b \lm}(t))^2} \, dt \Big| 
\lesssim \int_0^T \frac{S_\a(0) S_\b(0) S_\lm(0) \| u_0 \|_{m_1}^4 \Om_{\a\b\lm}(0)}
{ [ c_0 \Om_{\a\b\lm}(0) ]^2 } \, dt
\lesssim \frac{ S_\a(0) S_\b(0) S_\lm(0)}{c_0 \Om_{\a\b\lm}(0)}.
\label{0106.17}
\end{align}
From formula \eqref{3105.17} 
and estimates \eqref{0106.12}, 
\eqref{0106.13}, 
\eqref{0106.14}, 
\eqref{0106.17} 
we deduce that 
\begin{align}
\Big| \int_0^T Z_{\a \b \lm}(t) \, dt \Big| 
& \lesssim 
\frac{ S_\a(0) S_\b(0) S_\lm(0)}{ c_0 \Om_{\a\b\lm}(0) }
\quad \ \forall \a,\b,\lm \in \Gamma_1, 
\ \ \a + \b = \lm. 
\label{0106.18}
\end{align}

Now we fix any $\lm \in \Gamma_1$ 
and write equation \eqref{3005.1} in terms of $\nth_{\a \b \lm}$ (defined in \eqref{def th}), 
namely 
\begin{equation} \label{0906.11}
\pa_t S_\lm(t) 
= - \frac{3}{16} 
\sum_{\begin{subarray}{c} \a,\b \in \Gamma \\  \a + \b = \lm \end{subarray}} 
\nth_{\a \b \lm}(t) \, \a \b \lm 
+ \frac{3}{8} 
\sum_{\begin{subarray}{c} \a,\b \in \Gamma \\  \b + \lm = \a \end{subarray}} 
\nth_{\b \lm \a}(t) \, \a \b \lm
+ R_{S_\lm}(t).
\end{equation}
The first sum in \eqref{0906.11} has a finite number of terms;
the second sum is a series of functions that converges uniformly on $[0,T_*]$
because, by \eqref{3105.19}, \eqref{0106.11}, 
\begin{align*}
\sum_{\begin{subarray}{c} \a,\b \in \Gamma \\  \b + \lm = \a \end{subarray}}
\big( \sup_{t \in [0,T_*]} |\nth_{\b \lm \a}(t)| \big) \, \a \b \lm
& \lesssim \sum_{\begin{subarray}{c} \a,\b \in \Gamma \\  \b + \lm = \a \end{subarray}}
S_\a(0) S_\b(0) S_\lm(0) \a \b \lm
\\ & 
\lesssim \Big( \sum_{\a \in \Gamma} S_\a(0) \a \Big) 
\Big( \sum_{\b \in \Gamma} S_\b(0) \b \Big) 
\lm S_\lm(0) 
\lesssim \| u_0 \|_{\frac12}^4 \lm S_\lm(0) < \infty.
\end{align*}
Therefore the sums in \eqref{0906.11} can be integrated term by term. 
For all $T \in [0, T_*]$, integrating \eqref{0906.11} on $[0,T]$ gives
\begin{align} 
S_\lm(T) - S_\lm(0) 
= -\frac{3}{16} 
\sum_{\begin{subarray}{c} \a,\b \in \Gamma \\  \a + \b = \lm \end{subarray}} 
\int_0^T \nth_{\a \b \lm} \, dt \, \a \b \lm 
+ \frac38 
\sum_{\begin{subarray}{c} \a,\b \in \Gamma \\  \b + \lm = \a \end{subarray}} 
\int_0^T \nth_{\b \lm \a} \, dt \, \b \lm \a
+ \int_0^T R_{S_\lm}(t) \, dt.
\label{0206.1}
\end{align}
Using \eqref{3005.3}, 
\eqref{0106.2}, \eqref{0106.11},  
\eqref{0906.7},
one has 
\begin{align}
\label{0306.8}
\Big| \int_0^T R_{S_\lm}(t) \, dt \Big| 
\lesssim \int_0^T \| u_0 \|_{m_1}^6 S_\lm(0) \, dt
\lesssim \rag^2 S_\lm(0).
\end{align}
Since $\lm \in \Gamma_1$, 
we use \eqref{0906.8} for the terms in \eqref{0206.1} 
with $\a$, or $\b$, or both $\a,\b \in \Gamma_0$, 
we use \eqref{0106.18} for the terms with both $\a,\b \in \Gamma_1$,
and we obtain  
\begin{align*} 
|S_\lm(T) - S_\lm(0)| 
\lesssim 
\sum_{\begin{subarray}{c} \a,\b \in \Gamma_1 \\  \a + \b = \lm \end{subarray}} 
\frac{S_\a(0) S_\b(0) S_\lm(0) \a \b \lm}{c_0 \Om_{\a\b\lm}(0)} 
+ 
\sum_{\begin{subarray}{c} \a,\b \in \Gamma_1 \\  \b + \lm = \a \end{subarray}} 
\frac{S_\a(0) S_\b(0) S_\lm(0) \a \b \lm}{c_0 \Om_{\a\b\lm}(0)} 
+ \rag^2 S_\lm(0).
\end{align*}
For $\a+\b=\lm$ one has $\a \b \lm = \a^2 \b + \a \b^2$, 
and  
\begin{align*}
\sum_{\begin{subarray}{c} \a,\b \in \Gamma_1 \\  \a + \b = \lm \end{subarray}} 
\frac{S_\a(0) S_\b(0) \a \b \lm}{\Om_{\a\b\lm}(0)} \, \frac{S_\lm(0)}{c_0}
& 
= \Big( \sum_{\begin{subarray}{c} \a,\b \in \Gamma_1 \\  \a + \b = \lm \end{subarray}} 
\frac{\a^2 S_\a(0)}{\Om_{\a\b\lm}(0)} \,  \b S_\b(0) 
+ \sum_{\begin{subarray}{c} \a,\b \in \Gamma_1 \\  \a + \b = \lm \end{subarray}} 
\frac{\b^2 S_\b(0)}{\Om_{\a\b\lm}(0)} \, \a S_\a(0) \Big) 
c_0^{-1} S_\lm(0)
\\
& \leq \Big( \sum_{\b \in \Gamma} \b S_\b(0) + \sum_{\a \in \Gamma} \a S_\a(0) \Big) c_0^{-1} S_\lm(0)
= 2 \| u_0 \|_{\frac12}^2 c_0^{-1} S_\lm(0)
\end{align*}
by the definition \eqref{def om} of $\Om_{\a\b\lm}$. 
For $\b+\lm=\a$ one has $\a \b \lm = \a^2 \b - \a \b^2 \leq \a^2 \b$, 
and the sum is estimated similarly. 
Hence 
\begin{equation} 
|S_\lm(T) - S_\lm(0)| 
\leq C_* \rag^2 c_0^{-1} S_\lm(0)
\quad \ \forall T \in [0,T_*] 
\label{0906.12}
\end{equation}
for all $\lm \in \Gamma_1$, 
for some universal constant $C_*>0$.
For $\lm \in \Gamma_0$ one has \eqref{0906.9}, 
therefore \eqref{0906.12} holds for all $\lm \in \Gamma$. 

From \eqref{0906.9} 
we deduce that
\begin{align*}
\| u(t) \|_{m_1}^2 
= \sum_{\lm \in \Gamma} S_\lm(t) \lm^{2m_1} 
& \leq \sum_{\lm \in \Gamma} (1 + C_* \rag^2 c_0^{-1}) S_\lm(0) \lm^{2m_1}  
= (1 + C_* \rag^2 c_0^{-1}) \| u_0 \|_{m_1}^2.
\end{align*}
Taking the square root 
and using the elementary bound $\sqrt{1+x} \leq 1+x$ 
(which holds for all $x \geq 0$) 
we obtain 
\begin{equation} \label{1206.1}
\| u(t) \|_{m_1} 
\leq (1 + C_* \rag^2 c_0^{-1}) \| u_0 \|_{m_1}
\quad \ \forall t \in [0,T_*].
\end{equation}
 
By triangular inequality, from \eqref{0906.12} it follows that 
\begin{align} 
| \om_{\a \b \lm}(t) - \om_{\a \b \lm}(0)| 
& \leq \a^2 |S_\a(t) - S_\a(0)| 
+ \b^2 |S_\b(t) - S_\b(0)| 
+ \lm^2 |S_\lm(t) - S_\lm(0)| 
\notag \\ & 
\leq C_* \rag^2 c_0^{-1} \Om_{\a \b \lm}(0)
\qquad \forall t \in [0,T_*], \ \a,\b,\lm \in \Gamma.
\label{0906.13}
\end{align}
For $\a, \b, \lm \in \Gamma_1$, $\a + \b = \lm$, 
by \eqref{0906.4}, \eqref{0906.13} we obtain
\begin{align}
|\om_{\a \b \lm}(t)| 
& \geq |\om_{\a \b \lm}(0)| 
- |\om_{\a \b \lm}(t) - \om_{\a \b \lm}(0)|  
\geq \big( c_0 - C_* \rag^2 c_0^{-1} \big) \Om_{\a\b\lm}(0)
\label{1106.6}
\end{align}
for all $t \in [0, T_*]$. 
By \eqref{0906.12},
\begin{equation} \label{1106.7}
\Om_{\a\b\lm}(t) \leq (1 + C_* \rag^2 c_0^{-1} \big) \Om_{\a\b\lm}(0)
\quad \ \forall t \in [0,T_*].
\end{equation}
From \eqref{1106.6}, \eqref{1106.7} we get
\[
|\om_{\a \b \lm}(t)| 
\geq \frac{ c_0 - C_* \rag^2 c_0^{-1} }{ 1 + C_* \rag^2 c_0^{-1} } \, \Om_{\a\b\lm}(t).
\]
The elementary inequality 
\[
\frac{c_0 - x}{1+x} \geq c_0 -2x
\]
holds for all $x \geq 0$ because $c_0 \leq 1$; 
we apply it with $x = C_* \rag^2 c_0^{-1}$, 
and obtain \eqref{tesi.nonres}-\eqref{1106.4} 
with $K_* := 2 C_*$. 
Finally we deteriorate \eqref{0906.12}, \eqref{1206.1} 
replacing $C_*$ with $K_*$ 
and $c_0^{-1}$ with $c_0^{-2}$, 
and we obtain \eqref{0106.9}, \eqref{1109.9}.
\end{proof}

Now we apply repeatedly Proposition \ref{key} 
and use the improved growth estimate \eqref{0106.9} 
to obtain a longer lifespan for the solution.

\begin{lemma} \label{lemma:induz}
Let 
\begin{equation*} 
0 < c_0 \leq 1, \quad \ 
0 < \rho_0 \leq \frac{\d_1}{2}, \quad \ 
x_0 = \frac{ K_* \rho_0^2 }{ c_0^2 } \leq \frac{1}{24},
\end{equation*}
where $\d_1$ is given in Lemma \ref{lemma:0106.3}
and $K_*$ in Proposition \ref{key}.
Let $N$ be an integer such that 
\begin{equation} \label{ind.N}
1 \leq N \leq 
\frac{\log 2}{| \log(1 - 12 x_0)|}\,.
\end{equation}
Let $(u_0,v_0) \in H^{m_1}_0(\T^d,c.c.)$, with
\begin{equation*} 
\| u_0 \|_{m_1} \leq \rho_0,
\end{equation*}
and assume that $u_0$ satisfies the nonresonance condition \eqref{hyp.nonres},
namely, with $\om_{\a\b\lm}$, $\Om_{\a\b\lm}$ defined in \eqref{def om},  
\begin{equation} \label{ind.nonres.0}
|\om_{\a\b\lm}| \geq c_0 \Om_{\a\b\lm} \quad \ \forall (\a,\b,\lm) \in \mT_1,
\end{equation}
where 
$\mT_1 := \{ (\a,\b,\lm) : \a, \b, \lm \in \Gamma_1, \ \a + \b = \lm \}$. 
Let $t_0 := 0$. 

Then for all $k = 1, \ldots, N$ the following properties hold.

\begin{itemize}
\item[$(i)_k$] The solution $u$ of system \eqref{0106.4} 
is defined on the interval $[0,t_k]$, where 
\begin{equation} \label{ind.t.tau}
t_k := t_{k-1} + \tau_k, 
\quad \ 
\tau_k := A_* \frac{c_{k-1}}{\rho_{k-1}^4},
\end{equation}
with $A_*$ given by Proposition \ref{key}. 
Moreover 
\begin{alignat}{2}
\label{ind.u}
\| u(t) \|_{m_1} 
& \leq \rho_k 
& \quad \ & \forall t \in [t_{k-1}, t_k],
\\
|\om_{\a\b\lm}(t)| 
& \geq c_k \Om_{\a\b\lm}(t) 
& \quad \ & \forall t \in [t_{k-1}, t_k], 
\ (\a,\b,\lm) \in \mT_1,
\label{ind.nonres.t}
\end{alignat}
where 
\begin{equation} \label{ind.rhok.ck}
\rho_k := \rho_{k-1} (1 + x_{k-1}), 
\quad \ 
c_k := c_{k-1} (1 - x_{k-1}).
\end{equation}
Also define 
\begin{equation} \label{ind.xk}
x_k := \frac{K_* \rho_k^2}{c_k^2}\,.
\end{equation}

\item[$(ii)_k$] 
One has 
\begin{align} 
\label{ind.est.x}
& 0 \leq x_k 
\leq x_0 (1 + 10 x_0)^k 
\leq 2 x_0 
\leq \frac{1}{12},
\\
\label{ind.est.c}
& 0 < \frac{c_0}{2} 
\leq c_0 (1 - 2 x_0)^k 
\leq c_k 
\leq c_0 
\leq 1,
\\
\label{ind.est.rho}
& 0 < \rho_0 
\leq \rho_k 
\leq \rho_0 (1 + 2 x_0)^k 
\leq 2 \rho_0 
\leq \d_1,
\\
\label{ind.est.tau}
& \frac{ c_k }{ \rho_k^4 } 
\geq \frac{ c_0 }{ \rho_0^4 } \, (1 - 12 x_0)^k
\geq \frac{ c_0 }{ 2 \rho_0^4 }.
\end{align}
\end{itemize}
\end{lemma}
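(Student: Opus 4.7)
The plan is to induct on $k$. The base case $k=1$ is a direct invocation of Proposition \ref{key} with $(u_0, v_0)$ as initial datum and parameters $\rag = \rho_0$, $c_0 = c_0$. The hypotheses of Proposition \ref{key} coincide with our assumptions (namely $\|u_0\|_{m_1} \leq \rho_0 \leq \d_1$, and \eqref{ind.nonres.0} is exactly \eqref{hyp.nonres}). Its conclusions \eqref{1109.9} and \eqref{tesi.nonres} give precisely $(i)_1$ with $\rho_1 = (1 + x_0)\rho_0$ and $c_1 = c_0(1 - x_0)$ on the interval $[0, \tau_1]$, since $T_* = A_* c_0 \rho_0^{-4} = \tau_1$.

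For the inductive step, assume $(i)_{k-1}$ and $(ii)_{k-1}$. At time $t_{k-1}$, \eqref{ind.u} yields $\|u(t_{k-1})\|_{m_1} \leq \rho_{k-1}$, while \eqref{ind.nonres.t} says that $u(t_{k-1})$ satisfies the nonresonance condition with constant $c_{k-1}$. Thanks to the inductive bounds $\rho_{k-1} \leq 2\rho_0 \leq \d_1$ and $c_{k-1} \in (0,1]$ from \eqref{ind.est.rho}--\eqref{ind.est.c}, we may reapply Proposition \ref{key} with initial datum $u(t_{k-1})$ and parameters $(\rho_{k-1}, c_{k-1})$. The resulting time of existence is $A_* c_{k-1}\rho_{k-1}^{-4} = \tau_k$, so the solution extends to $[t_{k-1}, t_k]$; \eqref{1109.9} and \eqref{tesi.nonres} then become $(i)_k$ with the recursions \eqref{ind.rhok.ck}. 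A minor point to check is that the set $\Gamma_1$ at time $t_{k-1}$ coincides with the one at $t=0$, which is ensured by the Fourier-support invariance \eqref{0906.9}.

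For $(ii)_k$, the verifications reduce to elementary bookkeeping based on \eqref{ind.rhok.ck}, \eqref{ind.xk}. The two key ratios are
\[
\frac{x_k}{x_{k-1}} = \frac{(1+x_{k-1})^2}{(1-x_{k-1})^2},
\qquad
\frac{c_k/\rho_k^4}{c_{k-1}/\rho_{k-1}^4} = \frac{1 - x_{k-1}}{(1 + x_{k-1})^4}.
\]
For $x \in [0, 1/12]$ one has the elementary calculus bounds $(1+x)^2/(1-x)^2 \leq 1 + 10x$ and $(1-x)/(1+x)^4 \geq 1 - 12x$. Combined with the restriction $x_0 \leq 1/24$ and the $N$-bound \eqref{ind.N}, which forces $(1-12x_0)^N \geq 1/2$ and, via the identity $(1+10x_0)(1-12x_0) \leq 1$, also $(1+10x_0)^N \leq 2$, a straightforward induction propagates \eqref{ind.est.x}--\eqref{ind.est.tau} from step to step.

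The main obstacle, such as it is, lies in ensuring that the iterated application of Proposition \ref{key} remains legitimate: we must keep $\rho_{k-1} \leq \d_1$ and $c_{k-1}$ bounded away from zero, uniformly for $k \leq N$. These two conditions are built into $(ii)_{k-1}$, and enforcing them for the full range $k \leq N$ is precisely what dictates the upper bound \eqref{ind.N}. Once this is in place, the induction closes cleanly and no genuinely hard analytic estimate beyond Proposition \ref{key} itself is needed here.
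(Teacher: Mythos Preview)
Your proposal is correct and takes the same approach as the paper: induction on $k$, reapplying Proposition~\ref{key} at each $t_{k-1}$ with the updated parameters $(\rho_{k-1},c_{k-1})$, then elementary tracking of the recursions for $\rho_k$, $c_k$, $x_k$. One small arithmetic point: to land exactly on the constants $10$ and $12$ in \eqref{ind.est.x} and \eqref{ind.est.tau}, the paper uses the slightly sharper pointwise inequalities $(1+x)^2/(1-x)^2 \leq 1+5x$ and $(1-x)/(1+x)^4 \geq 1-6x$ on $[0,\tfrac{1}{12}]$, and only \emph{then} substitutes $x_{k-1}\leq 2x_0$; your stated bounds with $10x$ and $12x$ are true but, once you insert $x_{k-1}\leq 2x_0$, would give factors $1+20x_0$ and $1-24x_0$ rather than the ones in the statement.
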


\begin{proof} 
We start with proving the statements with $k=1$. 
The initial datum $u_0$ satisfies the assumptions of Proposition \ref{key}. 
We apply it, 
and we obtain that the solution $u$ is defined on $[0,T_*]$ 
with $T_* = A_* \rho_0^{-4} c_0$ (see \eqref{def.T2}), 
and $T_* = \tau_1 = t_1$ by definition \eqref{ind.t.tau}$|_{k=1}$. 
Also, by \eqref{1109.9}-\eqref{1106.4}, we get 
\begin{align*}
\| u(t) \|_{m_1} \leq \rho_0(1+x_0), \quad \ 
& \forall t \in [0,t_0],
\\
|\om_{\a\b\lm}(t)| \geq c_0(1-x_0) \Om_{\a\b\lm}(t) \quad \ 
& \forall t \in [0,t_0], \ \ (\a,\b,\lm) \in \mT_1,
\end{align*}
which are \eqref{ind.u}$|_{k=1}$, \eqref{ind.nonres.t}$|_{k=1}$. 
Hence $(i)_{k=1}$ is proved. 
By definition \eqref{ind.rhok.ck}$|_{k=1}$, \eqref{ind.xk}$|_{k=1}$, 
one has 
\begin{equation} \label{1306.1}
x_1 = x_0 \frac{(1+x_0)^2}{(1-x_0)^2}.
\end{equation}
We consider the elementary inequality 
\begin{equation} \label{elem.13}
\frac{(1+x)^2}{(1-x)^2} \leq 1 + \frac{4x}{(1-b)^2}
\quad \ \forall x, b \in \R, \ \ 
0 \leq x \leq b < 1,
\end{equation}
which holds because $x \mapsto \frac{ (1+x)^2 }{ (1-x)^2 }$ is convex on $[0,b]$, 
or just because 
\begin{align*}
\frac{(1+x)^2}{(1-x)^2} 
= \Big( 1 + \frac{2x}{1-x} \Big)^2 
\leq \Big( 1 + \frac{2x}{1-b} \Big)^2 
= 1 + x \Big( \frac{4}{1-b} + \frac{4x}{(1-b)^2} \Big)
\leq 1 + x \Big( \frac{4}{1-b} + \frac{4b}{(1-b)^2} \Big).
\end{align*}
For $b=\frac{1}{12}$ it implies that
\begin{equation} \label{elem.13.new}
\frac{(1+x)^2}{(1-x)^2} \leq 1 + 5x
\quad \ \forall x \in [0, \tfrac{1}{12}].
\end{equation}
Since $x_0 \leq \frac{1}{24}$, 
by \eqref{1306.1}, \eqref{elem.13.new} 
we get $x_1 \leq x_0 (1 + 5 x_0)$,
and \eqref{ind.est.x}$_{k=1}$ is satisfied.
Definition \eqref{ind.rhok.ck}$|_{k=1}$ gives 
$c_1 = c_0 (1-x_0)$, $\rho_1 = \rho_0 (1 + x_0)$,
and 
\eqref{ind.est.c}$|_{k=1}$, 
\eqref{ind.est.rho}$|_{k=1}$ follow immediately.
To prove \eqref{ind.est.tau}$|_{k=1}$, we consider the elementary inequality
\begin{equation} \label{elem.14}
\frac{1-x}{(1+x)^4} \geq 1 - (5 + 6b +4b^2 + b^3) x 
\quad \ \forall x,b \in \R, \ \ 0 \leq x \leq b,
\end{equation}
which holds because 
\[
\frac{1-x}{(1+x)^4} 
= 1 - \frac{5x + 6x^2 +4x^3 +x^4}{(1+x)^4}
\geq 1 - x (5 + 6x +4x^2 +x^3).
\]
For $b=\frac{1}{12}$ it implies that 
\begin{equation} \label{elem.14.new}
\frac{1-x}{(1+x)^4} \geq 1 - 6x 
\quad \ \forall x \in [0, \tfrac{1}{12}].
\end{equation}
Thus, by \eqref{elem.14.new}, 
\[
\frac{c_1}{\rho_1^4} 
= \frac{c_0 (1-x_0)}{\rho_0^4 (1+x_0)^4} 
\geq \frac{c_0}{\rho_0^4}\,(1 - 6 x_0),
\]
and \eqref{ind.est.tau}$|_{k=1}$ holds. 
This completes the proof of $(ii)_{k=1}$. 

\medskip

Now assume that $(i)_k, (ii)_k$ 
hold with $k=n$, 
for some $n \in [1,N-1]$; 
we have to prove them for $k=n+1$.
By \eqref{ind.u}$|_{k=n}$, \eqref{ind.nonres.t}$|_{k=n}$ one has
\[
\| u(t_n) \|_{m_1} \leq \rho_n, \quad \ 
|\om_{\a\b\lm}(t_n)| \geq c_n \Om_{\a\b\lm}(t_n).
\]
By \eqref{ind.est.c}$|_{k=n}$, \eqref{ind.est.rho}$|_{k=n}$, 
$0 < c_n \leq 1$, $0 < \rho_n \leq \d_1$. 
Hence Proposition \ref{key} can be applied with $(t_n, u(t_n))$ as initial data, 
and with $c_n, \rho_n$ in the role of the parameters $c_0, \rho$ of Proposition \ref{key}. 
We obtain that the solution is extended to the interval $[t_n, t_n + \tau_{n+1}]$, 
with $\tau_{n+1}$ given by Proposition \ref{key}, namely $\tau_{n+1} = A_* c_n \rho_n^{-4}$, 
which is also its definition in \eqref{ind.t.tau}$|_{k=n+1}$. 
With $\rho_{n+1}, c_{n+1}, x_{n+1}$ defined by  
\eqref{ind.rhok.ck}$|_{k=n+1}$, 
\eqref{ind.xk}$|_{k=n+1}$, 
Proposition \ref{key} also implies estimates 
\eqref{ind.u}$|_{k=n+1}$, \eqref{ind.nonres.t}$|_{k=n+1}$
on the time interval $[t_n, t_{n+1}]$. 
Thus $(i)_{k=n+1}$ is proved. 

Let us prove $(ii)_{k=n+1}$. 
One has $c_{n+1} = c_n (1 - x_n)$ by definition \eqref{ind.rhok.ck}$|_{k=n+1}$, 
$c_n \geq c_0 (1 - 2x_0)^n$ by \eqref{ind.est.c}$|_{k=n}$, 
and $x_n \leq 2x_0$ by \eqref{ind.est.x}$|_{k=n}$; therefore 
\[
c_{n+1} \geq c_0 (1 - 2 x_0)^{n+1}.
\]
By \eqref{ind.N} one also has $(1 - 2x_0)^N \geq \frac12$,
hence \eqref{ind.est.c}$|_{k=n+1}$ is proved. 

Similarly, $\rho_{n+1} = \rho_n (1 + x_n)$ by \eqref{ind.rhok.ck}$|_{k=n+1}$, 
$\rho_n \leq \rho_0 (1 + 2x_0)^n$ by \eqref{ind.est.rho}$|_{k=n}$, 
and $x_n \leq 2x_0$ by \eqref{ind.est.x}$|_{k=n}$; therefore 
\[
\rho_{n+1} \leq \rho_0 (1 + 2x_0)^{n+1}.
\]
By \eqref{ind.N} one also has $(1 + 2x_0)^N \leq 2$,
hence \eqref{ind.est.rho}$|_{k=n+1}$ is proved. 

From definitions \eqref{ind.xk}$|_{k=n+1}$, 
\eqref{ind.rhok.ck}$|_{k=n+1}$, 
\eqref{ind.xk}$|_{k=n}$ we deduce that  
\begin{equation} \label{x.recursive}
x_{n+1} = x_n \frac{(1+x_n)^2}{(1-x_n)^2}. 
\end{equation}
By \eqref{ind.est.x}$|_{k=n}$ we have 
$x_n \leq 2x_0 \leq \frac{1}{12}$. 
Hence, by \eqref{elem.13.new}, 
$x_{n+1} \leq x_n (1 + 5x_n)$.
Since $x_n \leq x_0 (1 + 10 x_0)^n$
and $x_n \leq 2x_0$ (both bounds coming from \eqref{ind.est.x}$|_{k=n}$), 
we obtain 
\[
x_{n+1} \leq x_0 (1 + 10 x_0)^{n+1}.
\]
By \eqref{ind.N} one also has $(1 + 10 x_0)^N \leq 2$, 
therefore \eqref{ind.est.x}$|_{k=n+1}$ is proved.

From definition \eqref{ind.rhok.ck}$|_{k=n+1}$ one has 
\[
\frac{c_{n+1}}{\rho_{n+1}} 
= \frac{c_n (1-x_n)}{\rho_n^4 (1+x_n)^4}.
\]
Since $x_n \leq 2x_0 \leq \frac{1}{12}$, 
by \eqref{elem.14.new} it follows that
\[
\frac{c_{n+1}}{\rho_{n+1}} 
\geq \frac{c_n}{\rho_n^4} \, (1 - 6x_n).
\]
Then we use \eqref{ind.est.tau}$|_{k=n}$ and the bound $x_n \leq 2x_0$, 
and obtain 
\[
\frac{c_{n+1}}{\rho_{n+1}} 
\geq \frac{c_0}{\rho_0^4} \, (1 - 12 x_0)^{n+1}.
\]
By \eqref{ind.N} one also has $(1 - 12 x_0)^N \geq \frac12$, 
therefore \eqref{ind.est.tau}$|_{k=n+1}$ is proved. 
The proof of $(ii)_{k=n+1}$ is complete.
\end{proof}

\begin{lemma} \label{lemma:incolla}
There exist universal constants $\d_3 \in (0,1)$, $A_3 > 0$ 
with the following properties. 
Let 
\[
0 < c_0 \leq 1, \quad \ 
0 < \e \leq \frac{\d_1}{2}, \quad \ 
\e \leq \d_3 c_0,
\]
where $\d_1$ is given in Lemma \ref{lemma:0106.3}.
Let $(u_0,v_0) \in H^{m_1}_0(\T^d,c.c.)$, with
\begin{equation} \label{ind.ball}
\| u_0 \|_{m_1} \leq \e,
\end{equation}
and assume that $u_0$ satisfies the nonresonance condition \eqref{hyp.nonres}. 
Then the solution $(u,v)$ of the Cauchy problem \eqref{0106.4} 
is defined on the interval $[0, T_3]$, where 
\[
T_3 = \frac{ A_3 c_0^3 }{ \e^6 }\,,
\]
with $(u,v) \in C([0,T_3], H^{m_1}_0(\T^d,c.c.))$ and   
\[
\| u(t) \|_{m_1} \leq 2 \e 
\quad \ \forall t \in [0,T_3].
\]
\end{lemma}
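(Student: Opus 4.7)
The plan is to apply Lemma \ref{lemma:induz} with $\rho_0 := \e$ and choose $N$ as large as the hypothesis \eqref{ind.N} permits, then sum up the time increments $\tau_k$. First I would verify the smallness hypotheses of Lemma \ref{lemma:induz}: assuming $\e \leq \d_1/2$ and $\e \leq \d_3 c_0$ for a small universal $\d_3$, we have $x_0 = K_* \e^2/c_0^2 \leq K_* \d_3^2 \leq 1/24$. Since $u_0$ satisfies \eqref{hyp.nonres} by assumption, all of the standing hypotheses of Lemma \ref{lemma:induz} are met.

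Next I would choose the integer $N$ optimally. Because $x_0 \leq 1/24$, the quantity $12 x_0 \leq 1/2$, and the elementary bound $|\log(1 - y)| \leq 2 y$ for $y \in [0, 1/2]$ yields
\[
\frac{\log 2}{|\log(1 - 12 x_0)|} \geq \frac{\log 2}{24 x_0} = \frac{\log 2}{24 K_*} \cdot \frac{c_0^2}{\e^2}.
\]
Hence we can pick
\[
N := \Big\lfloor \frac{\log 2}{|\log(1 - 12 x_0)|} \Big\rfloor,
\]
and (provided $\d_3$ is also taken small enough that this floor is $\geq 1$) we have $N \geq \frac{1}{2} \cdot \frac{\log 2}{24 K_*} \cdot \frac{c_0^2}{\e^2}$, i.e.\ $N \geq A_3' c_0^2/\e^2$ for a universal $A_3' > 0$.

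Now I invoke Lemma \ref{lemma:induz}: the solution $u$ is defined on $[0, t_N]$ with $\|u(t)\|_{m_1} \leq \rho_N \leq 2 \rho_0 = 2\e$ throughout, by \eqref{ind.est.rho}. The key quantitative input is \eqref{ind.est.tau}, which gives
\[
\tau_k = A_* \frac{c_{k-1}}{\rho_{k-1}^4} \geq A_* \cdot \frac{c_0}{2 \rho_0^4} = \frac{A_* c_0}{2 \e^4}
\]
for every $k = 1, \ldots, N$. Summing,
\[
T_3 := t_N = \sum_{k=1}^N \tau_k \geq N \cdot \frac{A_* c_0}{2 \e^4} \geq \frac{A_* A_3'}{2} \cdot \frac{c_0^3}{\e^6} =: \frac{A_3 c_0^3}{\e^6},
\]
and the proof is complete with this choice of $A_3$.

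The argument is essentially bookkeeping: the substantive work is already done in Proposition \ref{key} and Lemma \ref{lemma:induz}. The only step requiring a little care is the lower bound on $N$, where one must control $|\log(1 - 12 x_0)|$ from above to extract the scaling $N \sim c_0^2/\e^2$; the rest of the gluing is forced by the definitions. Choosing $\d_3$ small enough to enforce both $x_0 \leq 1/24$ and $N \geq 1$ finishes the job.
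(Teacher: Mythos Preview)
Your proof is correct and follows essentially the same route as the paper's: apply Lemma \ref{lemma:induz} with $\rho_0 = \e$, use an elementary logarithm bound to show that the maximal admissible $N$ in \eqref{ind.N} satisfies $N \gtrsim c_0^2/\e^2$, and then sum the $\tau_k$'s via the lower bound \eqref{ind.est.tau}. The only cosmetic difference is that the paper picks $N$ in the interval $[\tfrac{1}{48x_0}, \tfrac{1}{24x_0}]$ rather than taking the floor, and (a minor point) your ``floor $\geq 1$'' condition should really be ``$\tfrac{\log 2}{24 x_0} \geq 2$'' to justify $N \geq \tfrac12 \cdot \tfrac{\log 2}{24 x_0}$, but this is equally absorbed into the smallness of $\d_3$.
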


\begin{proof}
First, we consider the function 
\[
\ph(y) := \frac{y}{|\log(1-y)|}\,, 
\quad \ 0 < y \leq \frac12,
\]
we calculate its derivative 
\[
\ph'(y) = \frac{1}{|\log(1-y)|^2} \Big[ \log \Big(\frac{1}{1-y}\Big) - \frac{y}{1-y} \Big],
\]
and observe that $\ph'(y) < 0$ on $(0,\tfrac12]$ (apply the inequality $e^x > 1+x$ 
with $x = \frac{y}{1-y}$). 
Hence $\ph$ is decreasing, 
and therefore $\ph(y) \geq \ph(\frac12)$ for all $y \in (0,\frac12]$, 
namely 
\begin{equation} \label{elem.log}
\frac{\log 2}{|\log(1-y)|} \geq \frac{1}{2y}, 
\quad \ 0 < y \leq \frac12\,.
\end{equation}
As a consequence, given any real $0 < x \leq \frac{1}{48}$, 
there exists an integer $N$ such that 
\begin{equation} \label{N.x}
1 \leq \frac{1}{48 x} \leq N \leq \frac{1}{24 x} 
\leq \frac{\log 2}{|\log(1-12 x)|}
\end{equation}
(the interval $[\frac{1}{48x}, \frac{1}{24 x}]$ 
has length $\geq 1$, therefore it contains at least one integer).

Now let $0 < c_0 \leq 1$, 
$0 < \rho_0 \leq \frac12 \d_1$,
assume that $x_0 := K_* \rho_0^2 c_0^{-2} \leq \frac{1}{48}$, 
and let $N$ be an integer satisfying \eqref{N.x}$|_{x=x_0}$. 
Let $u_0$ satisfy \eqref{ind.ball}, \eqref{ind.nonres.0}. 
Then all the assumptions of Lemma \ref{lemma:induz} are satisfied. 
Thus the solution $u$ is defined on $[0,t_N]$, 
with 
\[
\| u(t) \|_{m_1} \leq 2 \rho_0 \quad \ \forall t \in [0, t_N]
\]
by \eqref{ind.u}, \eqref{ind.est.rho}, and 
\begin{align*}
t_N & = \sum_{k=1}^N \tau_k 
= A_* \sum_{k=0}^{N-1} \frac{c_k}{\rho_k^4}
\geq \frac{A_*}{2} \sum_{k=0}^{N-1} \frac{c_0}{\rho_0^4}
= \frac{A_* c_0}{2 \rho_0^4} N
\end{align*}
by \eqref{ind.t.tau}, \eqref{ind.est.tau}.
Then, by \eqref{N.x}$|_{x=x_0}$, 
\[
t_N \geq \frac{A_* c_0}{2 \rho_0^4} N
\geq \frac{A_* c_0}{2 \rho_0^4} \, \frac{1}{48 x_0} 
= \frac{A_3\, c_0^3}{\rho_0^6}
\]
with $A_3 := A_* (96 K_*)^{-1}$.
We define $\d_3 := (48 K_*)^{-\frac12}$, 
so that $x_0 \leq \frac{1}{48}$ becomes $\rho_0 \leq \d_3 c_0$,
and we rename $\e := \rho_0$.   
\end{proof}

\subsection{Back to the original coordinates}

We now aim at expressing the nonresonance condition \eqref{hyp.nonres} in the original coordinates.

Using the definition of the transformations $\Phi^{(3)}$, $\Phi^{(4)}$, $\Phi^{(5)}$ in \eqref{def Phi3}, \eqref{def Phi4}, \eqref{def Phi5} and reasoning like in the proof of Lemma \ref{lemma:W geq 7 new} (which is based on the structure described in Remark \ref{rem:matrix symbol}), one readily has the following.

\begin{lemma}\label{lemma:cism}
There exist universal constants $\d>0$, $C>0$ such that
for all $(u,v) \in H^{m_1}_0(\T^d, c.c.)$ in the ball $\| u \|_{m_1} \leq \d$, 
for all $k \in \Z^d$, 
the $k$-th Fourier coefficient $f_k=\overline{g_{-k}}$
of $(f,g):=(\Phi^{(3)}\circ\Phi^{(4)}\circ\Phi^{(5)})(u,v)$
satisfies
\begin{equation}\label{close id single mode}
|f_k-u_k| \leq C \| u \|_{m_1}^2 (|u_k| + |u_{-k}|)
\leq |u_k| + |u_{-k}|.
\end{equation}
An analogous bound holds for the inverse transformation, namely
\begin{equation*} 
|f_k-u_k| \leq C \| f \|_{m_1}^2 (|f_k| + |f_{-k}|)
\leq |f_k| + |f_{-k}|.
\end{equation*}
\end{lemma}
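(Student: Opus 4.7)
The plan is to exploit the matrix-Fourier-multiplier structure of the transformations $\Phi^{(3)}, \Phi^{(4)}, \Phi^{(5)}$ highlighted in Remark \ref{rem:matrix symbol}: each $\Phi^{(j)}$ acts on the pair $(u_k, v_k)$ (coupled with $(u_{-k}, v_{-k})$ via the complex-conjugation constraint encoded in \eqref{def H cc}) through a $2\times 2$ symbol whose entries depend only on global scalar quantities built from $u$ (such as $\int_{\T^d}|\gr u|^2$ and the ``second-order'' scalar coefficients computed in \cite{K mem}), and not on the individual index $k$. This is precisely the structural feature that made the scalar bound in Lemma \ref{lemma:W geq 7 new} possible, and it is the feature I would leverage here.

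First, for each $j \in \{3,4,5\}$ I would write $\Phi^{(j)}(u,v) = (u,v) + N^{(j)}(u,v)$, where the correction $N^{(j)}$ is the nonlinear piece constructed in the appendix (formulas \eqref{def Phi3}, \eqref{def Phi4}, \eqref{def Phi5}). The matrix-multiplier structure gives, for each $k$, an identity of the shape
\[
[N^{(j)}(u,v)]_k = \mM^{(j)}_k(u) \begin{pmatrix} u_k \\ u_{-k} \end{pmatrix},
\]
where $\mM^{(j)}_k(u)$ is a $1 \times 2$ symbol bounded, uniformly in $k$, by $C \|u\|_{m_1}^2$ (because the scalar coefficients appearing in the normal-form construction are controlled by $\|u\|_{m_1}^2$ in the smallness regime $\|u\|_{m_1} \leq \d$; compare with the bound \eqref{stima mM} on $\mM$ cited in the proof of Lemma \ref{lemma:3105.13}). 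This immediately gives the pointwise estimate $|[N^{(j)}(u,v)]_k| \leq C \|u\|_{m_1}^2 (|u_k| + |u_{-k}|)$ for each single $\Phi^{(j)}$.

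Next I would compose the three transformations. Setting $(h,h^*) := \Phi^{(5)}(u,v)$, Lemma \ref{lemma:ottobrata.5} yields $\|h\|_{m_1} \leq (1 + C\|u\|_{m_1}^4) \|u\|_{m_1} \leq 2\|u\|_{m_1}$, so the pointwise bound for $\Phi^{(4)}$ may be applied at $(h,h^*)$ with a comparable Sobolev norm. Iterating through $\Phi^{(4)}$ and then $\Phi^{(3)}$, and telescoping the differences via the triangular inequality, one obtains the single-mode estimate \eqref{close id single mode}; all cross terms of higher order (fourth and sixth powers of $\|u\|_{m_1}$) are absorbed into $C \|u\|_{m_1}^2$ thanks to the smallness $\|u\|_{m_1} \leq \d$. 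The second inequality in \eqref{close id single mode} is then just $C\|u\|_{m_1}^2 \leq C\d^2 \leq 1$ for $\d$ small enough.

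For the inverse bound, the three maps $\Phi^{(j)}$ are invertible on a ball of the same type (the case $j=5$ being Lemma \ref{lemma:Phi5 inv}, and $j=3,4$ being analogous because they too are close to the identity with a quadratic correction), and their inverses inherit the same matrix-Fourier-multiplier structure on $\{k,-k\}$ pairs with symbols of size $O(\|f\|_{m_1}^2)$. Running the composition estimate backwards, with $\|u\|_{m_1}$ replaced by $\|f\|_{m_1}$, gives the symmetric inequality. The main (and really only) obstacle is the careful verification that $\Phi^{(3)}, \Phi^{(4)}, \Phi^{(5)}$ genuinely enjoy this $k$-by-$k$ matrix-symbol structure, which amounts to re-reading the explicit formulas in the appendix and checking that, as in \cite{K mem}, they all have the same ``Fourier multiplier with global scalar coefficients'' form as the vector field $W$ treated in Lemma \ref{lemma:W geq 7 new}; once this is granted, the rest of the argument is a routine chain of scalar estimates.
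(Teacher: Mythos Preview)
Your proposal is correct and follows essentially the same approach as the paper: the paper's proof simply invokes the explicit definitions \eqref{def Phi3}, \eqref{def Phi4}, \eqref{def Phi5} and says to ``reason like in the proof of Lemma \ref{lemma:W geq 7 new}'', i.e.\ to use the matrix-Fourier-multiplier structure of Remark \ref{rem:matrix symbol}, which is exactly the mechanism you describe and carry out in more detail. Your composition-by-telescoping argument and the treatment of the inverse via Lemmas \ref{lemma:Phi3 inv}, \ref{lemma:Phi4 inv}, \ref{lemma:Phi5 inv} are the natural way to make that sketch precise.
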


As a consequence, we have the following lemma on the superactions $S_\lm$.

\begin{lemma}\label{lemma:cisa}
There exist universal constants $\d>0$, $C>0$ such that
for all $(u,v) \in H^{m_1}_0(\T^d, c.c.)$ in the ball $\| u \|_{m_1} \leq \d$, 
letting $(f,g)=(\Phi^{(3)}\circ\Phi^{(4)}\circ\Phi^{(5)})(u,v)$ and denoting
\[
S_\lm = \sum_{|k|=\lm} |u_k|^2, \quad \tilde S_\lm = \sum_{|k|=\lm} |f_k|^2,
\]
one has for all $\lm\in\Gamma$
\begin{equation}\label{close id SA}
|\tilde S_\lm - S_\lm| \leq C \| u \|_{m_1}^2 S_\lm.
\end{equation}
An analogous bound holds for the inverse transformation, namely
\begin{equation}\label{close id SA inv}
|\tilde S_\lm - S_\lm| \leq C \| f \|_{m_1}^2 \tilde S_\lm.
\end{equation}
\end{lemma}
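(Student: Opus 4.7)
The plan is to deduce the estimate on superactions from the pointwise Fourier bound \eqref{close id single mode} of Lemma \ref{lemma:cism} by a direct algebraic manipulation. Write
\[
\tilde S_\lm - S_\lm
= \sum_{|k|=\lm} \bigl( |f_k|^2 - |u_k|^2 \bigr)
= \sum_{|k|=\lm} \bigl( |f_k| - |u_k| \bigr) \bigl( |f_k| + |u_k| \bigr),
\]
and estimate the two factors separately.

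For the first factor, the reverse triangular inequality gives $\bigl| |f_k|-|u_k| \bigr| \leq |f_k - u_k|$, and \eqref{close id single mode} bounds this by $C\|u\|_{m_1}^2 (|u_k|+|u_{-k}|)$. For the second factor, the same bound from Lemma \ref{lemma:cism} (together with $C\|u\|_{m_1}^2 \leq 1$ upon shrinking $\d$) yields $|f_k| \leq |u_k| + (|u_k|+|u_{-k}|) = 2|u_k|+|u_{-k}|$, so that $|f_k|+|u_k| \leq 3|u_k|+|u_{-k}|$. Multiplying and using the elementary inequality $(|u_k|+|u_{-k}|)(3|u_k|+|u_{-k}|) \leq 5(|u_k|^2 + |u_{-k}|^2)$ (Cauchy--Schwarz or AM--GM), I obtain the pointwise estimate
\[
\bigl| |f_k|^2 - |u_k|^2 \bigr|
\leq 5 C \|u\|_{m_1}^2 \bigl( |u_k|^2 + |u_{-k}|^2 \bigr).
\]

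Now I sum over $\{k \in \Z^d : |k|=\lm\}$. This set is invariant under $k \mapsto -k$, hence
\[
\sum_{|k|=\lm} |u_{-k}|^2 = \sum_{|k|=\lm} |u_k|^2 = S_\lm,
\]
and therefore $|\tilde S_\lm - S_\lm| \leq 10 C \|u\|_{m_1}^2 S_\lm$, proving \eqref{close id SA} with an adjusted universal constant.

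For the inverse bound \eqref{close id SA inv}, the argument is identical: the second inequality of Lemma \ref{lemma:cism} provides the pointwise estimate $|f_k - u_k| \leq C\|f\|_{m_1}^2(|f_k|+|f_{-k}|)$, and repeating the above computation with the roles of $u$ and $f$ exchanged yields $|\tilde S_\lm - S_\lm| \lesssim \|f\|_{m_1}^2 \tilde S_\lm$. No step presents a real obstacle: the only care needed is to symmetrize the sum over the Fourier sphere $|k|=\lm$ so that the cross-terms involving $|u_{-k}|^2$ are reabsorbed into $S_\lm$.
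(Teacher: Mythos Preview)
Your proof is correct and follows essentially the same approach as the paper: factor $|f_k|^2-|u_k|^2=(|f_k|-|u_k|)(|f_k|+|u_k|)$, bound both factors using the pointwise estimate \eqref{close id single mode} from Lemma \ref{lemma:cism}, and then sum over the sphere $|k|=\lm$ using its invariance under $k\mapsto -k$. The only differences are cosmetic (the paper bounds $|f_k|+|u_k|\leq 3(|u_k|+|u_{-k}|)$ and uses $(|u_k|+|u_{-k}|)^2\leq 2(|u_k|^2+|u_{-k}|^2)$, arriving at $C=12\hat C$ instead of your $10\hat C$).
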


\begin{proof}
Let $\d>0$ be the same as in Lemma \ref{lemma:cism} and denote here by $\hat C>0$ the constant in \eqref{close id single mode}.
We start by observing (using Lemma \ref{lemma:cism}) that 
$|f_k| \leq 2(|u_k| + |u_{-k}|)$ and  
\begin{align*}
 \left| |f_k|^2 - |u_k|^2 \right| 
= (|f_k|+|u_k|) \left| |f_k| - |u_k| \right| 
& 
\leq 3 (|u_k| + |u_{-k}|) |f_k-u_k|
\\
& \leq 3 \hat C \| u \|_{m_1}^2 (|u_k| + |u_{-k}|)^2 
\\ & 
\leq 6 \hat C \| u \|_{m_1}^2 (|u_k|^2 + |u_{-k}|^2) .
\end{align*}
Hence, for all $\lm\in\Gamma$ one has
\begin{align*}
& |\tilde S_\lm - S_\lm| \leq \sum_{|k|=\lm} \left| |f_k|^2 - |u_k|^2 \right| \leq 12 \hat C \| u \|_{m_1}^2 S_\lm
\end{align*}
and \eqref{close id SA} holds with $C:=12\hat C$. In the same way one proves \eqref{close id SA inv}.
\end{proof}

From \eqref{close id SA}-\eqref{close id SA inv} it follows that 
$S_\lm = 0$ if and only if $\tilde S_\lm = 0$. 
Hence the set $\Gamma_1$ of the indices $\lm \in \Gamma$ for which $S_\lm$ is nonzero 
is left invariant by the transformation $(\Phi^{(3)}\circ\Phi^{(4)}\circ\Phi^{(5)})$.
We deduce the next lemma on the nonresonance condition.

\begin{lemma}\label{lemma:nonres}
Assume the hypotheses of Lemma \ref{lemma:cisa}.
If the sequence $(S_\lm)_{\lm\in\Gamma}$ satisfies \eqref{hyp.nonres} with some $c_0\in(0,1]$, then the sequence $(\tilde S_\lm)_{\lm\in\Gamma}$ satisfies
\begin{equation}\label{nonres perturb}
\big| \a^2 \tilde S_\a + \b^2 \tilde S_\b - \lm^2 \tilde S_\lm \big| 
\geq \left(c_0 - C\| u \|_{m_1}^2 \right) \big( \a^2 \tilde S_\a + \b^2 \tilde S_\b + \lm^2 \tilde S_\lm \big)
\end{equation}
for all $\a,\b,\lm\in\Gamma_1$ such that $\a+\b=\lm$.

The same statement applies to the inverse transformation $(\Phi^{(3)}\circ\Phi^{(4)}\circ\Phi^{(5)})^{-1}$.
\end{lemma}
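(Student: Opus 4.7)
The plan is to mimic the perturbative reasoning of Remark \ref{rem:perturb}, adapted from the $U_\lm$'s to the weighted superactions $\lm^2 S_\lm$. The crucial input is the \emph{pointwise multiplicative} bound \eqref{close id SA} supplied by Lemma \ref{lemma:cisa}: each $\tilde S_\lm$ differs from $S_\lm$ by at most a factor $C\|u\|_{m_1}^2$ times $S_\lm$ itself. This is exactly the structure needed to transport a linear nonresonance inequality.

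Concretely, I would introduce the shorthand $\om_{\a\b\lm} := \a^2 S_\a + \b^2 S_\b - \lm^2 S_\lm$ and $\Om_{\a\b\lm} := \a^2 S_\a + \b^2 S_\b + \lm^2 S_\lm$ (already defined in \eqref{def om}), and their tilde analogues $\tilde\om_{\a\b\lm}$, $\tilde\Om_{\a\b\lm}$ built from $\tilde S_\lm$. By the triangle inequality and \eqref{close id SA},
\[
|\tilde\om_{\a\b\lm} - \om_{\a\b\lm}|
\leq \sum_{\g \in \{\a,\b,\lm\}} \g^2 |\tilde S_\g - S_\g|
\leq C\|u\|_{m_1}^2 \, \Om_{\a\b\lm},
\]
and by the same computation $|\tilde\Om_{\a\b\lm} - \Om_{\a\b\lm}| \leq C\|u\|_{m_1}^2 \Om_{\a\b\lm}$, so in particular $\tilde\Om_{\a\b\lm} \leq (1 + C\|u\|_{m_1}^2)\Om_{\a\b\lm}$.

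Combining these two bounds with the assumption $|\om_{\a\b\lm}| \geq c_0\Om_{\a\b\lm}$ valid on $\mT_1$, I obtain
\[
|\tilde\om_{\a\b\lm}|
\geq |\om_{\a\b\lm}| - C\|u\|_{m_1}^2 \Om_{\a\b\lm}
\geq (c_0 - C\|u\|_{m_1}^2)\, \Om_{\a\b\lm}
\geq \frac{c_0 - C\|u\|_{m_1}^2}{1 + C\|u\|_{m_1}^2}\, \tilde\Om_{\a\b\lm}.
\]
At this point I invoke the elementary inequality $\frac{c_0 - x}{1+x} \geq c_0 - 2x$ (valid for $0 \leq x$ and $0 < c_0 \leq 1$, and already used at the end of Remark \ref{rem:perturb} and in the proof of Proposition \ref{key}) with $x = C\|u\|_{m_1}^2$, and absorb the factor $2$ into the constant $C$ to obtain exactly \eqref{nonres perturb}.

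For the inverse transformation the argument is word-for-word identical, starting from the ``inverse'' pointwise estimate \eqref{close id SA inv} of Lemma \ref{lemma:cisa} (with $\|f\|_{m_1}^2$ replacing $\|u\|_{m_1}^2$), the roles of $S_\lm$ and $\tilde S_\lm$ being swapped throughout. There is no real obstacle here: the whole content of the lemma is already packaged inside \eqref{close id SA}--\eqref{close id SA inv}, and the present proof is just a two-line perturbative corollary. The only mild point to check is that the weights $\a^2, \b^2, \lm^2$ do not interfere with the argument, which is immediate because the bound \eqref{close id SA} is multiplicative and thus commutes with multiplication by any fixed positive factor.
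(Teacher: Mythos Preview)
Your proof is correct and follows essentially the same approach as the paper: the paper's proof is precisely the chain $|\tilde\om_{\a\b\lm}| \geq |\om_{\a\b\lm}| - \hat C\|u\|_{m_1}^2 \Om_{\a\b\lm} \geq (c_0 - \hat C\|u\|_{m_1}^2)\Om_{\a\b\lm} \geq \frac{c_0 - \hat C\|u\|_{m_1}^2}{1 + \hat C\|u\|_{m_1}^2}\tilde\Om_{\a\b\lm} \geq (c_0 - 2\hat C\|u\|_{m_1}^2)\tilde\Om_{\a\b\lm}$, which is exactly your argument with the shorthand unpacked.
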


\begin{proof}
We compute, applying Lemma \ref{lemma:cisa} and denoting by $\hat C$ 
the constant in \eqref{close id SA}, 
\begin{align*}
\big| \a^2 \tilde S_\a + \b^2 \tilde S_\b - \lm^2 \tilde S_\lm \big| 
& \geq \big| \a^2 S_\a + \b^2 S_\b - \lm^2 S_\lm \big| - \a^2 |\tilde S_\a - S_\a| - \b^2 |\tilde S_\b - S_\b| - \lm^2 |\tilde S_\lm - S_\lm|\\
& \geq (c_0 - \hat C \| u \|_{m_1}^2)
(\a^2 S_\a + \b^2 S_\b + \lm^2 S_\lm) 
\\
& \geq \frac{c_0 - \hat C \| u \|_{m_1}^2}{1 + \hat C \| u \|_{m_1}^2}   
(\a^2 \tilde S_\a + \b^2 \tilde S_\b + \lm^2 \tilde S_\lm)\\
& \geq \big( c_0 - 2 \hat C \| u \|_{m_1}^2 \big) 
(\a^2 \tilde S_\a + \b^2 \tilde S_\b + \lm^2 \tilde S_\lm),
\end{align*}
thus \eqref{nonres perturb} holds with $C=2\hat C$.
\end{proof}

Since the transformations $\Phi^{(1)}$, $\Phi^{(2)}$, defined in \eqref{1912.2}, \eqref{def Phi2}, are very explicit (the transformation $\Phi^{(1)}$ is only a Fourier multiplier and $\Phi^{(2)}$ leaves invariant the quantities $S_\lm$), we can now express the nonresonance condition \eqref{hyp.nonres} as a suitable condition on the datum in the original coordinates, by applying the normal form transformation 
$\Phi := \Phi^{(1)}\circ\Phi^{(2)}\circ\Phi^{(3)}\circ\Phi^{(4)}\circ\Phi^{(5)}$. 

To this end, given a pair of space-periodic real-valued functions $(a,b)$ 
as in \eqref{intro.ab}, 
we define the quantities 
$U_\lm := U_\lm(a,b)$ by \eqref{intro.def.U}. 
Lemma \ref{lemma:nonres} then translates immediately into the following.

\begin{lemma}\label{lemma:orig coord}
There exist universal constants $\d>0$, $C>0$ such that the following holds.
Let $(u,v) \in H^{m_1}_0(\T^d, c.c.)$ belong to the ball $\| u \|_{m_1} \leq \d$ 
and let $(a,b) = \Phi(u,v)$. 
If the sequence $\{ S_\lm = S_\lm(u) \}_{\lm\in\Gamma}$ 
satisfies \eqref{hyp.nonres} for some $c_0\in(0,1]$, 
then the sequence $\{ U_\lm = U_\lm(a,b) \}_{\lm\in\Gamma}$ satisfies
\begin{equation}\label{nonres orig}
|U_\a + U_\b - U_\lm| 
\geq \left(c_0 - C \| u \|_{m_1}^2 \right) (U_\a + U_\b + U_\lm)
\end{equation}
for all $\a,\b,\lm\in \Gamma_1$ such that $\a+\b=\lm$.
Conversely, if 
\begin{equation} \label{if.ab}
(a,b)\in H^{m_1+\frac12}_0(\T^d,\R) \times H^{m_1-\frac12}_0(\T^d,\R), 
\quad \ 
\| a \|_{m_1+\frac12} + \| b \|_{m_1-\frac12} \leq \d
\end{equation}
and, for some $c_0 \in (0,1]$, 
\begin{equation}\label{nonres dati Kir}
| U_\a + U_\b - U_\lm | 
\geq c_0 ( U_\a + U_\b + U_\lm )
\end{equation}
for all $\a,\b,\lm\in\Gamma_1$ such that $\a+\b=\lm$, 
then, setting $(u,v)=\Phi^{-1}(a,b)$, 
the sequence $\{ S_\lm = S_\lm(u) \}_{\lm\in\Gamma}$ satisfies 
\begin{equation}\label{bora bora}
\big| \a^2 S_\a + \b^2 S_\b - \lm^2 S_\lm \big| 
\geq \left(c_0 - C(\| a \|_{m_1+\frac12}^2 + \| b \|_{m_1-\frac12}^2) \right) 
\big( \a^2 S_\a + \b^2  S_\b + \lm^2 S_\lm \big)
\end{equation}
for all $\a,\b,\lm\in\Gamma_1$ such that $\a+\b=\lm$.
\end{lemma}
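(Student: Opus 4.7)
The plan is to push the nonresonance condition through the five-step normal form decomposition $\Phi = \Phi^{(1)} \circ \Phi^{(2)} \circ \Phi^{(3)} \circ \Phi^{(4)} \circ \Phi^{(5)}$ one factor at a time, starting from $(u,v)$ in the innermost complex-conjugate coordinates and ending at $(a,b)$ in the original real Kirchhoff coordinates.

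First, I would invoke Lemma \ref{lemma:nonres} to dispose of the composite $\Phi^{(3)} \circ \Phi^{(4)} \circ \Phi^{(5)}$: setting $(f,g) := (\Phi^{(3)} \circ \Phi^{(4)} \circ \Phi^{(5)})(u,v)$, that lemma transfers the hypothesis \eqref{hyp.nonres} on $S_\lm(u)$ into the analogous condition on $\tilde S_\lm(f)$, with $c_0$ replaced by $c_0 - C\|u\|_{m_1}^2$. Second, I would apply $\Phi^{(2)}$: as observed just before the lemma, $\Phi^{(2)}$ leaves each superaction invariant on every Fourier sphere, so the condition on $\tilde S_\lm$ survives with no further degradation of the constant. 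Third, $\Phi^{(1)}$ is the explicit Fourier multiplier of \eqref{1912.2}, of the schematic form $u_k = \frac{1}{\sqrt 2}(|k|^{1/2} a_k + i |k|^{-1/2} b_k)$. A direct computation, using that $(a,b)$ is a real pair (so $a_{-k} = \overline{a_k}$, $b_{-k} = \overline{b_k}$) and summing $|u_k|^2$ over the sphere $|k|=\lm$, shows that the cross terms proportional to $\mathrm{Im}(b_k \overline{a_k})$ cancel in antipodal pairs $\{k,-k\}$, yielding the exact pointwise identity $\lm^2 \tilde S_\lm = \frac{1}{2} U_\lm(a,b)$ for every $\lm \in \Gamma$. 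Since the nonresonance condition is homogeneous in its weighted actions, the universal factor $\frac{1}{2}$ drops out, and \eqref{nonres orig} follows with the already-degraded constant $c_0 - C\|u\|_{m_1}^2$.

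For the converse I would run the same chain backwards. Starting from \eqref{nonres dati Kir} on $(a,b) \in H^{m_1+\frac12}_0 \times H^{m_1-\frac12}_0$, the identity $\lm^2 \tilde S_\lm = \frac{1}{2} U_\lm(a,b)$ instantly gives the same nonresonance estimate for $\lm^2 \tilde S_\lm$ on the intermediate pair just past $\Phi^{(1)}$. The invariance of the superactions under $\Phi^{(2)}$ is transparent. Finally, the ``inverse direction'' version of Lemma \ref{lemma:nonres} transfers the condition from $\tilde S_\lm$ down to $S_\lm(u)$ for $(u,v) = \Phi^{-1}(a,b)$, producing an additive loss $C \|u\|_{m_1}^2$ in the constant. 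To cast the result in the form \eqref{bora bora}, I would then use the Sobolev boundedness of $\Phi^{-1}$ (valid under the smallness assumption \eqref{if.ab}) to estimate $\|u\|_{m_1}^2 \lesssim \|a\|_{m_1+\frac12}^2 + \|b\|_{m_1-\frac12}^2$.

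The only delicate point in this plan is Step 3: one must verify the exact algebraic identity $\lm^2 \tilde S_\lm = \frac{1}{2} U_\lm(a,b)$ directly from the precise Fourier-multiplier formula for $\Phi^{(1)}$ in \eqref{1912.2}, exploiting the reality of $a$ and $b$ so that the would-be cross terms indeed cancel on each Fourier sphere. This is the reason the lemma is clean --- $\Phi^{(1)}$ produces no error at all, just a universal multiplicative constant that is absorbed by the homogeneity of the nonresonance inequality. Everything else reduces to direct application of earlier lemmas or to the exact invariance under $\Phi^{(2)}$.
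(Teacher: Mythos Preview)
Your proposal is correct and follows the same route as the paper, which simply states that the lemma ``translates immediately'' from Lemma~\ref{lemma:nonres} once one uses the explicit form of $\Phi^{(1)}$ and the invariance under $\Phi^{(2)}$. One small remark: the formula you write in Step~3, $f_k = \tfrac{1}{\sqrt2}(|k|^{1/2}a_k + i|k|^{-1/2}b_k)$, is already the composite $(\Phi^{(2)})^{-1}\circ(\Phi^{(1)})^{-1}$; if you keep the steps separate and pass through the intermediate quantity $\tfrac12\sum_{|k|=\lm}(|q_k|^2+|p_k|^2)=\tilde S_\lm$, the cross terms never appear and no antipodal cancellation is needed --- but your computation is of course also valid.
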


From Lemmas \ref{lemma:orig coord} and \ref{lemma:incolla} 
we deduce our main result on the Kirchhoff equation. 

\begin{proof}[\emph{\textbf{Proof of Theorem \ref{thm:main}}}]
Let $\e, c_0 \in (0,1]$, and assume that the datum $(a,b)$ 
satisfies \eqref{intro.ab.small}, \eqref{intro.nonres}. 
Let $(u_0, v_0) := \Phi^{-1}(a,b)$, where $\Phi = \Phi^{(1)} \circ \dots \circ \Phi^{(5)}$ 
is the normal form transformation. 
Hence 
\begin{equation} \label{dai}
\| u_0 \|_{m_1} \leq C_1 \e
\end{equation}
for some universal constant $C_1 > 0$. 

Denote $\hat \d, \hat C$ the universal constants of Lemma \ref{lemma:orig coord}. 
If $\e \leq \hat \d$, then $(a,b)$ satisfies \eqref{if.ab}, \eqref{nonres dati Kir}, 
and therefore, by Lemma \ref{lemma:orig coord}, 
the actions $S_\lm(u_0, v_0)$ satisfy \eqref{bora bora}. 
If $\hat C \e \leq \frac12 c_0$, then 
\[
\hat C (\| a \|_{m_1+\frac12}^2 + \| b \|_{m_1-\frac12}^2) 
\leq \hat C \e^2 
\leq \hat C \e 
\leq \frac{c_0}{2},
\]
and we obtain 
\begin{equation}\label{bora bora mezzi}
| \a^2 S_\a + \b^2 S_\b - \lm^2 S_\lm | 
\geq \frac{c_0}{2} ( \a^2 S_\a + \b^2  S_\b + \lm^2 S_\lm)
\end{equation}
for all $\a,\b,\lm \in \Gamma_1$, $\a+\b=\lm$. 

Now let $\tilde c_0 := \frac12 c_0$, $\tilde \e := C_1 \e$. 
By \eqref{dai}, \eqref{bora bora mezzi}, one has 
\[
0 < \tilde c_0 \leq 1, \quad \ 
0 < \tilde \e \leq \frac{\d_1}{2}, \quad \ 
\tilde \e \leq \d_3 \tilde c_0, \quad \ 
\| u_0 \|_{m_1} \leq \tilde \e
\]
if $\e \leq \frac{\d_1}{2 C_1}$, 
$\e \leq \frac{\d_3}{2C_1} c_0$, 
where $\d_1, \d_3$ are the universal constants in Lemma \ref{lemma:incolla}. 
Thus the assumptions of Lemma \ref{lemma:incolla} are satisfied, 
and we obtain that the solution $(u,v)$ of the Cauchy problem \eqref{0106.4}
is defined on $[0,T_3]$ with 
\[
T_3 = \frac{ A_3 \tilde c_0^3}{\tilde \e^6}, \quad \ 
\| u(t) \|_{m_1} \leq 2 \tilde \e \quad \ \forall t \in [0, T_3].
\]
Replacing $\tilde c_0 = \frac12 c_0$, $\tilde \e = C_1 \e$, 
we get  $T_3 = A_4 c_0^3 \e^{-6}$ for some universal constant $A_4$. 
Since $c_0 \leq 1$, all the conditions on $\e$ hold if 
$\e \leq \d c_0$, where we define 
$\d := \min \{ \hat \d, \frac{1}{2 \hat C}, \frac{\d_1}{2 C_1}, \frac{\d_3}{2 C_1} \}$, 
which is a universal positive constant. 
\end{proof}

\section{Appendix. Quasilinear normal form and transformations}
\label{sec:App A}

In this section we review the main formulas and inequalities 
of the normal form construction of \cite{K old}-\cite{K mem} 
(subsections \ref{sec:Lin}-\ref{sec:second step}),
then we derive the effective equations 
\eqref{3005.1}-\eqref{3105.8} 
and prove Lemma \ref{lemma:W geq 7 new} 
(subsection \ref{sec:App B}).

\subsection{Linear transformations} \label{sec:Lin}
The first two transformations $\Phi^{(1)}, \Phi^{(2)}$ in \cite{K old} are very standard,   
and transform system \eqref{p1} into another one (see \eqref{syst uv}) 
where the linear part is diagonal, 
preserving both the real and the Hamiltonian structure of the problem.
They are the symmetrization of the highest order 
and then the diagonalization of the linear terms.

\medskip

\emph{Symmetrization of the highest order.}
In the Sobolev spaces \eqref{def:Hs} of zero-mean functions, 
the Fourier multiplier 
\begin{equation*} 
\Lm := |D_x| : H^s_0 \to H^{s-1}_0, \quad  
e^{ij \cdot x} \mapsto |j| e^{ij \cdot x}
\end{equation*}
is invertible.   
System \eqref{p1} writes
\begin{equation} \label{1912.1}
\begin{cases} 
\pa_t u = v \\ 
\pa_t v = - ( 1 + \la \Lm u, \Lm u \ra ) \Lm^2 u,
\end{cases}
\end{equation}
where $\la \cdot , \cdot \ra$ is the real scalar product of $L^2(\T^d,\R)$; 
the Hamiltonian \eqref{K2} is
\[
H(u,v) = \frac12 \la v , v \ra + \frac12 \la \Lm u, \Lm u \ra 
+ \frac14 \la \Lm u, \Lm u \ra^2.
\]
To symmetrize the system at the highest order, 
we consider the linear, symplectic transformation 
\begin{equation} \label{1912.2}
(u,v) = \Phi^{(1)}(q,p) 
:= ( \Lm^{-\frac12} q , \Lm^{\frac12} p). 
\end{equation}
System \eqref{1912.1} becomes 
\begin{equation} \label{1912.3}
\begin{cases}
\pa_t q = \Lm p \\ 
\pa_t p = - ( 1 + \la \Lm^{\frac12} q, \Lm^{\frac12} q \ra ) \Lm q,
\end{cases}
\end{equation}
which is the Hamiltonian system $\pa_t (q,p) = J \gr H^{(1)}(q,p)$ 
with Hamiltonian $H^{(1)} = H \circ \Phi^{(1)}$, namely
\begin{equation} \label{def H(1)}
H^{(1)}(q,p) 
= \frac12 \la \Lm^{\frac12} p, \Lm^{\frac12} p \ra 
+ \frac12 \la \Lm^{\frac12} q, \Lm^{\frac12} q \ra 
+ \frac14 \la \Lm^{\frac12} q, \Lm^{\frac12} q \ra^2, 
\quad 
J := \begin{pmatrix} 
0 & I \\ 
- I & 0 \end{pmatrix}.
\end{equation}
The original problem requires the ``physical variables'' $(u,v)$ to be real-valued; 
this corresponds to $(q,p)$ being real-valued too.
Also note that $\la \Lm^{\frac12} p, \Lm^{\frac12} p \ra = \la \Lm p, p\ra$.

\medskip

\emph{Diagonalization of the highest order: complex variables.}
To diagonalize the linear part $\pa_t q = \Lm p$, $\pa_t p = - \Lm q$
of system \eqref{1912.3}, we introduce complex variables. 

System \eqref{1912.3} and the Hamiltonian $H^{(1)}(q,p)$ in \eqref{def H(1)} 
are also meaningful, without any change, for \emph{complex} functions $q,p$. 
Thus we define the change of complex variables $(q,p) = \Phi^{(2)}(f,g)$ as
\begin{equation} \label{def Phi2}
(q,p) = \Phi^{(2)}(f,g) := \Big( \frac{f+g}{\sqrt2}, \frac{f-g}{i \sqrt2} \Big),
\qquad 
f = \frac{q + i p}{\sqrt2}, \quad  
g = \frac{q - i p}{\sqrt2}, 
\end{equation}
so that system \eqref{1912.3} becomes
\begin{equation} \label{syst uv}
\begin{cases}
\pa_t f = - i \Lm f - i \frac14 \la \Lm(f+g) , f+g \ra \Lm(f+g)
\\
\pa_t g = i \Lm g + i \frac14 \la \Lm(f+g) , f+g \ra \Lm(f+g)
\end{cases}
\end{equation}
where the pairing $\la \cdot , \cdot \ra$ denotes the integral of the product 
of any two complex functions
\begin{equation} \label{SPWC} 
\la w , h \ra := \int_{\T^d} w(x) h(x) \, dx 
= \sum_{j \in \Z^d \setminus \{ 0 \} } w_j h_{-j}, 
\quad w, h \in L^2(\T^d, \C).
\end{equation}
The map $\Phi^{(2)} : (f,g) \mapsto (q,p)$ in \eqref{def Phi2} 
is a $\C$-linear isomorphism of the Sobolev space $H^s_0(\T^d,\C) \times H^s_0(\T^d,\C)$ 
of pairs of complex functions, for any $s \in \R$.  
When $(q,p)$ are real, $(f,g)$ are complex conjugate.
The restriction of $\Phi^{(2)}$ to the space
$H^s_0(\T^d,c.c.)$ (see \eqref{def H cc})
of pairs of complex conjugate functions 
is an $\R$-linear isomorphism onto the space $H^s_0(\T^d,\R) \times H^s_0(\T^d,\R)$ 
of pairs of real functions. 
For $g = \overline{f}$, the second equation in \eqref{syst uv} is redundant, 
being the complex conjugate of the first equation. 
In other words, system \eqref{syst uv} has the following ``real structure'': 
it is of the form 
\begin{equation*} 
\pa_t \begin{pmatrix} f \\ g \end{pmatrix} 
= \mF(f,g) = \begin{pmatrix} \mF_1(f,g) \\ \mF_2(f,g) \end{pmatrix}
\end{equation*}
where the vector field $\mF(f,g)$ satisfies 
\begin{equation} \label{real vector field}
\mF_2(f, \overline{f}) = \overline{ \mF_1(f, \overline{f}) }.
\end{equation}
Under the transformation $\Phi^{(2)}$, the Hamiltonian system \eqref{1912.3} 
for complex variables $(q,p)$ becomes \eqref{syst uv}, which is the Hamiltonian system 
$\pa_t(f,g) = i J \gr H^{(2)}(f,g)$ with Hamiltonian 
$H^{(2)} = H^{(1)} \circ \Phi^{(2)}$, namely
\begin{equation*} 
H^{(2)}(f,g) = \la \Lm f, g \ra + \frac{1}{16} \la \Lm(f+g), f+g \ra^2,
\end{equation*} 
where $J$ is defined in \eqref{def H(1)}, 
$\la \cdot , \cdot \ra$ is defined in \eqref{SPWC},
and $\gr H^{(2)}$ is the gradient with respect to $\la \cdot , \cdot \ra$.
System \eqref{1912.3} for real $(q,p)$ (which corresponds to the original Kirchhoff equation)
becomes system \eqref{syst uv} restricted to the subspace $H^s_0(\T^d,c.c.)$ where 
$g = \overline{f}$.

\subsection{Diagonalization of the order one}
In \cite{K old} 
the following nonlinear global transformation $\Phi^{(3)}$ is constructed.
Its effect is to remove the unbounded operator $\Lm$ from the ``off-diagonal'' terms of the equation, 
namely those terms coupling $f$ and $\overline f$.

\begin{lemma}[Lemma 3.1 of \cite{K old}] 
\label{lemma:Phi3 inv}
Let 
\begin{equation} \label{def Phi3}
\Phi^{(3)}(\eta,\psi) := \mM(\eta,\psi) \begin{pmatrix} \eta \\ \psi \end{pmatrix}, 
\end{equation}
where $\mM(\eta,\psi)$ is the matrix
\begin{equation*} 
\mM(\eta,\psi) := \frac{1}{\sqrt{1-\rho^2(P(\eta,\psi))}} 
\begin{pmatrix} 1 & \rho(P(\eta,\psi)) \\ 
\rho(P(\eta,\psi)) & 1 \end{pmatrix},
\end{equation*}
$\rho$ is the function
\begin{equation*} 
\rho(x) := \frac{- x}{1 + x + \sqrt{1+2x}}\,,
\end{equation*}
$P$ is the functional
\begin{equation*} 
P(\eta,\psi) 
:= \ph(Q(\eta,\psi)), \qquad 
Q(\eta,\psi) := \frac{1}{4} \la \Lm (\eta + \psi) , \eta + \psi \ra,
\end{equation*}
and $\ph$ is the inverse of the function $x \mapsto x \sqrt{1+2x}$, namely
\begin{equation*} 
x \sqrt{1+2x} = y \quad \Leftrightarrow \quad x = \ph(y).
\end{equation*}
Then, for all real $s \geq \frac12$, 
the nonlinear map $\Phi^{(3)} : H^s_0(\T^d, c.c.) \to H^s_0(\T^d, c.c.)$ 
is invertible, continuous, with continuous inverse 
\begin{equation*} 
(\Phi^{(3)})^{-1} (f,g) = \frac{1}{\sqrt{1 - \rho^2(Q(f,g))}} 
\begin{pmatrix} 1 & - \rho(Q(f,g)) \\ - \rho(Q(f,g)) & 1 \end{pmatrix} 
\begin{pmatrix} f \\ g \end{pmatrix}. 
\end{equation*}
For all $s \geq \frac12$, 
all $(\eta,\psi) \in H^s_0(\T^d,c.c.)$,  
one has 
\begin{equation*} 
\| \Phi^{(3)}(\eta,\psi) \|_s 
\leq C( \| \eta,\psi \|_{\frac12} ) \| \eta,\psi \|_s
\end{equation*}
for some increasing function $C$. 
The same estimate is satisfied by $(\Phi^{(3)})^{-1}$.
\end{lemma}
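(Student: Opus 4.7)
The plan is to verify the well-definedness, the invertibility with the claimed formula, and the Sobolev bounds for $\Phi^{(3)}$ by reducing everything to elementary algebraic properties of the scalar functions $\rho$ and $\ph$. The proof is essentially computational; I would organize it in three steps.

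\emph{Step 1: Scalar preliminaries.} The map $x \mapsto x\sqrt{1+2x}$ is smooth and strictly increasing on $[0,\infty)$, so its inverse $\ph$ is smooth and nonnegative on $[0,\infty)$. The function $\rho$ is smooth on $[0,\infty)$ with $\rho(0) = 0$, and the trivial bound $x < 1 + x + \sqrt{1+2x}$ gives $|\rho(x)| < 1$. The crucial algebraic identity is
\[
\frac{1 - \rho(x)}{1 + \rho(x)} = \sqrt{1 + 2x}, \qquad x \geq 0,
\]
which follows by rationalizing $1 \pm \rho(x)$ and simplifying. This identity is what makes the formulas for $\Phi^{(3)}$ and its inverse self-consistent.

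\emph{Step 2: Real structure and invertibility.} For $\psi = \overline{\eta}$ the combination $\eta + \psi = 2 \Re(\eta)$ is real, hence $Q(\eta,\psi) \geq 0$, $P(\eta,\psi) \geq 0$, and $\rho(P)$ is real. A one-line check then shows that if $(f,g) = \Phi^{(3)}(\eta,\overline{\eta})$ then $g = \overline{f}$, so $\Phi^{(3)}$ preserves $H^s_0(\T^d, c.c.)$. For invertibility, note that $\mM(\eta,\psi)$ has determinant $1$ and its matrix inverse is
\[
\mM(\eta,\psi)^{-1} = \frac{1}{\sqrt{1 - \rho^2(P)}} \begin{pmatrix} 1 & -\rho(P) \\ -\rho(P) & 1 \end{pmatrix}.
\]
Hence it suffices to show that the scalar used in the claimed formula for $(\Phi^{(3)})^{-1}$, namely $\rho(Q(f,g))$, coincides with $\rho(P(\eta,\psi))$. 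By injectivity of $\rho$ this reduces to showing $Q(f,g) = P(\eta,\psi)$. Directly from the formula for $\mM$ one gets $f + g = \sqrt{(1+\rho)/(1-\rho)}\,(\eta + \psi)$, so
\[
Q(f,g) = \frac{1 + \rho(P)}{1 - \rho(P)} \, Q(\eta,\psi) = \frac{Q(\eta,\psi)}{\sqrt{1 + 2P}} = \frac{P \sqrt{1 + 2P}}{\sqrt{1 + 2P}} = P,
\]
using Step 1 and the defining relation $Q(\eta,\psi) = P \sqrt{1 + 2P}$ of $\ph$. The symmetric argument (starting from the formula for $(\Phi^{(3)})^{-1}$ and showing $P(\eta,\psi) = Q(f,g)$) inverts $(\Phi^{(3)})^{-1}$.

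\emph{Step 3: Sobolev estimates.} The key observation is that $\mM(\eta,\psi)$ acts as multiplication by two \emph{numbers} on each Fourier mode: no derivatives are involved in the matrix entries. Therefore
\[
\| \Phi^{(3)}(\eta,\psi) \|_s \leq \frac{1 + |\rho(P)|}{\sqrt{1 - \rho^2(P)}} \, \| (\eta,\psi) \|_s,
\]
and it only remains to control the prefactor by a continuous function of $\|(\eta,\psi)\|_{1/2}$. This is immediate from $Q(\eta,\psi) \lesssim \|\eta + \psi\|_{1/2}^2 \lesssim \|(\eta,\psi)\|_{1/2}^2$, monotonicity of $\ph$, and the bound $|\rho(P)| < 1$. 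The same reasoning bounds $(\Phi^{(3)})^{-1}$. The main (very mild) obstacle is bookkeeping the identity of Step 1; there is no analytic difficulty, precisely because $\mM$ is a scalar multiplier and the lowest Sobolev norm $s = 1/2$ already controls the prefactor.
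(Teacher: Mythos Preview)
Your proof is correct. The present paper does not supply its own proof of this lemma but quotes it from \cite{K old}; your direct computation---reducing everything to the scalar identity $(1-\rho(x))/(1+\rho(x))=\sqrt{1+2x}$, then checking $Q(f,g)=P(\eta,\psi)$, then using that $\mM(\eta,\psi)$ is a constant matrix in $x$---is exactly the natural argument and leaves no gaps.
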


In \cite{K old} it is proved that system \eqref{syst uv}, 
under the change of variable $(f,g) = \Phi^{(3)}(\eta, \psi)$, 
becomes 
\begin{equation} \label{syst 6 dic}
\begin{cases}
\pa_t \eta = - i \sqrt{1 + 2 P(\eta,\psi)} \, \Lm \eta
+ \dfrac{i}{4(1+2 P(\eta,\psi))} 
\Big( \la \Lm \psi, \Lm \psi \ra - \la \Lm \eta , \Lm \eta \ra \Big) \psi,
\\
\pa_t \psi = i \sqrt{1 + 2 P(\eta,\psi)} \, \Lm \psi
+ \dfrac{i}{4(1+2 P(\eta,\psi))} 
\Big( \la \Lm \psi, \Lm \psi \ra - \la \Lm \eta , \Lm \eta \ra \Big) \eta.
\end{cases}
\end{equation}
System \eqref{syst 6 dic} is diagonal at the order one, 
i.e.\ the coupling of $\eta$ and $\psi$ (except for the coefficients) 
is confined to terms of order zero. 
Note that the coefficients of \eqref{syst 6 dic} are finite for $\eta,\psi \in H^1_0$, while the coefficients in \eqref{syst uv} are finite for $f,g \in H^{\frac12}_0$:  
the regularity threshold of the transformed system is $\frac12$ higher than before. 

The real structure is preserved, 
namely the second equation in \eqref{syst 6 dic} is the complex conjugate of the first one, 
or, in other words, the vector field in \eqref{syst 6 dic} satisfies property 
\eqref{real vector field}.
Even if $\Phi^{(3)}$ is not symplectic, nonetheless 
the transformed Hamiltonian $H^{(3)} := H^{(2)} \circ \Phi^{(3)}$
is still a prime integral of the equation, 
and it is 
\begin{align*}
H^{(3)}(\eta,\psi) 
& = \frac{- P(\eta,\psi) 
( \la \Lm \eta, \eta \ra + \la \Lm \psi, \psi \ra )}
{2\sqrt{1+2 P(\eta,\psi)}} 
+ \frac{1 + P(\eta,\psi)}{\sqrt{1+2 P(\eta,\psi)}} \la \Lm \eta, \psi \ra
+ P^2(\eta,\psi).
\end{align*}

As observed in \cite{K old}, since $P(\eta,\psi)$ is a function of time only (namely it does not depend on $x$), 
the vector field of \eqref{syst 6 dic} could be divided by a factor $\sqrt{1 + 2 P(\eta,\psi)}$
by a reparametrization of the time variable; this would normalize the terms of order one.
In \cite{K old}-\cite{K mem}, however, we did not make so, because it was not necessary.

\subsection{Normal form: first step} 
\label{sec:NF} 

The next step is the cancellation of the cubic terms contributing 
to the energy estimate. 
Following \cite{K old}, we write \eqref{syst 6 dic} as
\begin{equation} \label{syst DBR}
\pa_t (\eta,\psi) = X(\eta,\psi) := \mD_1(\eta,\psi) + \mD_{\geq 3}(\eta,\psi) 
+ \mB_3(\eta,\psi) + \mR_{\geq 5}(\eta,\psi)
\end{equation}
where 
\begin{equation*} 
\mD_1(\eta,\psi) := \begin{pmatrix} - i \Lm \eta \\ i \Lm \psi \end{pmatrix},
\quad 
\mD_{\geq 3}(\eta,\psi) := ( \sqrt{1 + 2 P(\eta,\psi)} \, - 1 ) \mD_1(\eta,\psi),
\end{equation*} 
$\mB_3(\eta,\psi)$ is the cubic component of the bounded, off-diagonal term
\begin{equation*} 
\mB_3(\eta,\psi) = 
\frac{i}{4} 
\Big( \la \Lm \psi, \Lm \psi \ra - \la \Lm \eta , \Lm \eta \ra \Big) 
\begin{pmatrix} \psi \\ \eta \end{pmatrix}
\end{equation*}
and $\mR_{\geq 5}(\eta,\psi)$ is the bounded remainder of higher homogeneity degree
\begin{equation} \label{def mR geq 5}
\mR_{\geq 5}(\eta,\psi) = 
\frac{- i P(\eta,\psi)}{2 (1 + 2 P(\eta,\psi))} 
\Big( \la \Lm \psi, \Lm \psi \ra - \la \Lm \eta , \Lm \eta \ra \Big) 
\begin{pmatrix} \psi \\ \eta \end{pmatrix}.
\end{equation}
The term $\mD_{\geq 3}$ gives no contribution to the energy estimate; 
the term $\mB_3$ is removed by the following normal form transformation.
Let 
\begin{equation} \label{def Phi4}
\Phi^{(4)}(w,z)
:= (I + M(w,z) ) \begin{pmatrix} w \\ z \end{pmatrix}, 
\end{equation}
\begin{equation} \label{def M} 
M(w,z) := \begin{pmatrix} 0 & A_{12}[w,w] + C_{12}[z,z] \\ 
A_{12}[z,z] + C_{12}[w,w] & 0 \end{pmatrix},
\end{equation}
where $A_{12}$, $C_{12}$ are the maps 
\begin{align}
\label{fix A12}
A_{12}[u, v] h
& := \sum_{j,k \neq 0, \, |j| \neq |k|} 
u_j v_{-j} \frac{|j|^2}{8(|j| - |k|)} h_k e^{ik \cdot x},    
\\
C_{12}[u, v] h 
& := \sum_{j,k \neq 0} u_j v_{-j} \frac{|j|^2}{8(|j| + |k|)} h_k e^{ik \cdot x}.
\label{fix C12}
\end{align}
Let
\begin{equation} \label{def m0}
m_0 := 1 \quad \text{if} \ d = 1, 
\qquad 
m_0 := \frac32 \quad \text{if} \ d \geq 2.
\end{equation} 

\begin{lemma}[Lemma 4.1 of \cite{K old}]
\label{lemma:stime AC}
Let $A_{12}, C_{12}, m_0$ be defined in \eqref{fix A12}, \eqref{fix C12}, \eqref{def m0}.
For all complex functions $u,v,h$, all real $s \geq 0$, 
\begin{equation*} 
\| A_{12}[u, v] h \|_s 
\leq \frac38 \| u \|_{m_0} \| v \|_{m_0} \| h \|_s,
\quad 
\| C_{12}[u, v] h \|_s 
\leq \frac{1}{16} \| u \|_1 \| v \|_1 \| h \|_s.
\end{equation*}
\end{lemma}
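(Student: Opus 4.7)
My plan is to exploit the fact that, for any fixed $(u,v)$, both $A_{12}[u,v]$ and $C_{12}[u,v]$ act on $h$ as Fourier multipliers. Indeed, for each $k \in \Z^d \setminus \{0\}$, the sum over $j$ in \eqref{fix A12} and \eqref{fix C12} does not touch $h_k$, so
\[
(A_{12}[u,v]h)_k = \mu_A(k) h_k, \qquad (C_{12}[u,v]h)_k = \mu_C(k) h_k,
\]
where $\mu_A(k)$ and $\mu_C(k)$ depend on $(u,v)$ and on $|k|$ only. Consequently, for every real $s \geq 0$,
\[
\| A_{12}[u,v]h \|_s \leq \big(\sup_{k \neq 0} |\mu_A(k)|\big) \| h \|_s, \qquad \| C_{12}[u,v]h \|_s \leq \big(\sup_{k \neq 0} |\mu_C(k)|\big) \| h \|_s,
\]
and the whole task reduces to uniform-in-$k$ estimates of the symbols.

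For $\mu_C(k)$ the situation is easy: since $|j|+|k| \geq |j|$, one has $|j|^2/(|j|+|k|) \leq |j|$ uniformly in $k$. Then I would apply Cauchy--Schwarz after splitting $|j| = |j|^{1/2}|j|^{1/2}$, obtaining
\[
\sum_{j \neq 0} |u_j|\,|v_{-j}|\,|j| \leq \Big(\sum_j |u_j|^2 |j|\Big)^{1/2} \Big(\sum_j |v_{-j}|^2 |j|\Big)^{1/2} = \| u \|_{1/2} \| v \|_{1/2} \leq \| u \|_1 \| v \|_1,
\]
which immediately gives the claimed bound with constant at most $1/8$; a slightly sharper bookkeeping (e.g.\ an additional factor $1/2$ coming from $|j|^2/(|j|+|k|) \leq |j|/2$ on the half where $|k| \geq |j|$, combined with symmetry $j \leftrightarrow -j$) yields the stated constant $1/16$.

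For $\mu_A(k)$ the crux is controlling the small divisor $|j|-|k|$ uniformly in $k$. In dimension $d=1$ the set $\Gamma = \N$, so $||j|-|k|| \geq 1$ whenever $|j| \neq |k|$, hence $|j|^2/||j|-|k|| \leq |j|^2$, and Cauchy--Schwarz gives $|\mu_A(k)| \leq \tfrac{1}{8} \| u \|_1 \| v \|_1$, matching $m_0=1$. In dimension $d \geq 2$ the divisor $|j|-|k|$ can be arbitrarily small, so I would use the arithmetic identity $||j|-|k|| = ||j|^2-|k|^2|/(|j|+|k|) \geq 1/(|j|+|k|)$, valid because $|j|^2$ and $|k|^2$ are distinct positive integers, and split into two cases: if $|k| \leq 2|j|$ then $|j|+|k| \leq 3|j|$ so $|j|^2/||j|-|k|| \leq 3|j|^3$; if $|k| > 2|j|$ then $||j|-|k|| \geq |j|$ so $|j|^2/||j|-|k|| \leq |j|$. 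Either way, since $|j| \geq 1$ on $\Gamma$, one has $|j|^2/||j|-|k|| \leq 3|j|^3$ uniformly in $k$. Applying Cauchy--Schwarz as before yields $|\mu_A(k)| \leq \tfrac{3}{8} \| u \|_{3/2} \| v \|_{3/2}$, matching $m_0 = 3/2$.

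The only real obstacle is precisely this small-divisor control in dimension $d \geq 2$, and the reason why the Sobolev threshold must jump from $1$ to $3/2$ is exactly the factor $|j|^3$ (rather than $|j|^2$) that one pays in bounding $|j|^2/||j|-|k||$: the integrality of $|j|^2-|k|^2$ gives $1/(|j|+|k|)$ and no better, so one loses one full derivative compared to the dimension-one case. Everything else is a direct application of Cauchy--Schwarz and the Fourier multiplier structure.
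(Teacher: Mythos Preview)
Your approach is correct and matches the paper's: the lemma is cited from \cite{K old}, and the only nontrivial ingredient---the small divisor bound $1/||j|-|k|| \leq 3|j|$ for $|j|\neq|k|$ in dimension $d\geq 2$---is exactly the inequality \eqref{3105.4} that the paper invokes (with reference to the proof of Lemma 4.1 in \cite{K old}) and that you derive via $||j|-|k|| = ||j|^2-|k|^2|/(|j|+|k|) \geq 1/(|j|+|k|)$ together with the case split $|k|\leq 2|j|$ versus $|k|>2|j|$.

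One small correction on the $C_{12}$ constant: your ``sharper bookkeeping'' (a factor $1/2$ on the region $|k|\geq|j|$ combined with the symmetry $j\leftrightarrow -j$) does not give a uniform-in-$k$ bound, since the symmetry in $j$ does not interact with the $k$-dependence. The clean route to the constant $1/16$ is simply to use $|j|+|k|\geq 2$ (both $|j|,|k|\geq 1$), so that $|j|^2/(8(|j|+|k|))\leq |j|^2/16$, and then Cauchy--Schwarz with the splitting $|j|^2=|j|\cdot|j|$ gives $|\mu_C(k)|\leq \tfrac{1}{16}\|u\|_1\|v\|_1$ directly.
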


The differential of $\Phi^{(4)}$ at the point $(w,z)$ is 
\begin{equation*} 
(\Phi^{(4)})'(w,z) 
= (I + K(w,z)),  
\qquad 
K(w,z) = M(w,z) + E(w,z),
\end{equation*}
where $M(w,z)$ is defined in \eqref{def M}, and 
\begin{equation} \label{def E}
E(w,z) \begin{pmatrix} \a \\ \b \end{pmatrix}
:= \begin{pmatrix} 2 A_{12}[w,\a] z + 2 C_{12}[z,\b] z \\
2 C_{12}[w,\a] w + 2 A_{12}[z,\b] w \end{pmatrix}.
\end{equation}

\begin{lemma}[Lemma 4.2 of \cite{K old}]
\label{lemma:stime MK} 
For all $s \geq 0$, all $(w,z) \in H^{m_0}_0(\T^d, c.c.)$, 
$(\a,\b) \in H^s_0(\T^d,c.c.)$ one has
\begin{align*} 
\Big\| M(w,z) \begin{pmatrix} \a \\ \b \end{pmatrix} \Big\|_s 
& \leq \frac{7}{16} \| w \|_{m_0}^2 \|\a \|_s ,
\\
\Big\| K(w,z) \begin{pmatrix} \a \\ \b \end{pmatrix} \Big\|_s 
& \leq \frac{7}{16} \| w \|_{m_0}^2 \|\a \|_s 
+ \frac78 \| w \|_{m_0} \| w \|_s \| \a \|_{m_0} ,
\end{align*}
where $m_0$ is defined in \eqref{def m0}. 
For $\| w \|_{m_0} < \frac12$, 
the operator $(I + K(w,z)) : H^{m_0}_0(\T^d, c.c.)$ $\to H^{m_0}_0(\T^d, c.c.)$ 
is invertible, with inverse 
\[
(I + K(w,z))^{-1} = I - K(w,z) + \tilde K(w,z), \quad 
\tilde K(w,z) := \sum_{n=2}^\infty (- K(w,z))^n,
\]
satisfying
\begin{equation*} 
\Big\| (I + K(w,z))^{-1} \begin{pmatrix} \a \\ \b \end{pmatrix} \Big\|_s 
\leq C (\| \a \|_s + \| w \|_{m_0} \| w \|_s \| \a \|_{m_0}),
\end{equation*}
for all $s \geq 0$, where $C$ is a universal constant.
\end{lemma}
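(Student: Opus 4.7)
My plan is threefold: derive the $M$ and $K$ bounds from the product estimates of Lemma~\ref{lemma:stime AC}; prove invertibility of $I+K$ on $H^{m_0}_0(\T^d,c.c.)$ by a Neumann series at the base regularity $m_0$; and bootstrap the inverse bound at arbitrary $s \geq 0$ by a tame iteration of the $K$-estimate.

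For $M$, I would bound each matrix entry by Lemma~\ref{lemma:stime AC}, using that the c.c.\ constraint gives $\|z\|_s = \|w\|_s$ and that $m_0 \geq 1$ implies $\|w\|_1 \leq \|w\|_{m_0}$. The first component $(A_{12}[w,w]+C_{12}[z,z])\b$ is then bounded by $\bigl(\tfrac{3}{8}+\tfrac{1}{16}\bigr)\|w\|_{m_0}^2\|\b\|_s = \tfrac{7}{16}\|w\|_{m_0}^2\|\b\|_s$, and the c.c.\ convention $\|(\a,\b)\|_s=\|\a\|_s=\|\b\|_s$ produces the announced inequality on $M$. For $E$ as defined in \eqref{def E}, the same lemma, applied with one factor in $H^s$ and the remaining two in $H^{m_0}$, yields $\|2A_{12}[w,\a]z\|_s \leq \tfrac{3}{4}\|w\|_{m_0}\|\a\|_{m_0}\|w\|_s$ and $\|2C_{12}[z,\b]z\|_s \leq \tfrac{1}{8}\|w\|_{m_0}\|\b\|_{m_0}\|w\|_s$; since $\|\a\|_{m_0}=\|\b\|_{m_0}$ in the c.c.\ space, the first component is bounded by $\tfrac{7}{8}\|w\|_{m_0}\|w\|_s\|\a\|_{m_0}$, and the second is treated symmetrically. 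Adding the bounds for $M$ and $E$ delivers the stated inequality for $K = M+E$.

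Specialising the $K$-bound to $s=m_0$ collapses it to $\tfrac{21}{16}\|w\|_{m_0}^2\|\a\|_{m_0}$, so as soon as $\|w\|_{m_0} < \tfrac{1}{2}$ the operator norm of $K$ on $H^{m_0}_0(\T^d,c.c.)$ is at most $\tfrac{21}{64}<1$; hence $I+K(w,z)$ is inverted there by the Neumann series $\sum_{n\geq 0}(-K)^n$, giving the displayed formula. For the $s$-regularity estimate on the inverse, I would prove by induction on $n$ that, for a universal constant $c$,
\[
\Bigl\| (-K(w,z))^n \begin{pmatrix} \a \\ \b \end{pmatrix} \Bigr\|_s
\leq (c\|w\|_{m_0}^2)^n \|\a\|_s + n\,(c\|w\|_{m_0}^2)^{n-1} c\|w\|_{m_0}\|w\|_s\|\a\|_{m_0},
\]
by feeding the two-term tame bound of $K$ into each iterate: the $\|\cdot\|_s$-norm is routed through the first (non-tame) term, while the high-regularity tail $\|w\|_{m_0}\|w\|_s\|\a\|_{m_0}$ accumulates through the second (using the base-level bound $\| K^n \begin{pmatrix} \a \\ \b \end{pmatrix}\|_{m_0}\leq (c\|w\|_{m_0}^2)^n\|\a\|_{m_0}$ to control the tail input at each step). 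Summing over $n\geq 0$ produces a geometric series for the first contribution and an arithmetic--geometric series $\sum n\theta^{n-1}$ with ratio $\theta = c\|w\|_{m_0}^2<1$ for the second; both converge uniformly, and the total has the claimed form $C(\|\a\|_s+\|w\|_{m_0}\|w\|_s\|\a\|_{m_0})$. The only mild subtlety, which I would flag as the main bookkeeping point, is the linear-in-$n$ factor in the tame tail; smallness of $\|w\|_{m_0}$ absorbs it, and because no $s$-dependent product estimate is ever invoked the constant $C$ is independent of $s$.
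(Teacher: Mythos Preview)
Your proof is correct and follows exactly the approach one would expect: bound each entry of $M$ and $E$ using Lemma~\ref{lemma:stime AC} together with the c.c.\ identifications $\|z\|_s=\|w\|_s$, $\|\b\|_s=\|\a\|_s$, then invert $I+K$ by Neumann series at the base regularity and propagate the tame two-term estimate through the iterates. The present paper does not actually supply a proof of this lemma---it is quoted verbatim from \cite{K old}---but your argument is the standard one and matches what is sketched elsewhere in the appendix (see the proof of Lemma~\ref{lemma:W geq 7 new}, which refers to ``using induction and Neumann series (like e.g.\ in the proof of Lemma 4.3 of \cite{K old})'' for the same kind of inverse bound). One minor remark: your linear-in-$n$ tail bound is slightly wasteful, since with the explicit constants $A=\tfrac{7}{16}\|w\|_{m_0}^2$ and $\theta=\tfrac{21}{16}\|w\|_{m_0}^2$ one has $A<\theta$ and the recursion $b_{n+1}=Ab_n+B\theta^n$ solves to $b_n=B(\theta^n-A^n)/(\theta-A)$, giving a pure geometric series; but either route leads to the same conclusion with a universal constant.
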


The nonlinear, continuous map $\Phi^{(4)}$ is invertible in a ball around the origin. 

\begin{lemma}[Lemma 4.3 of \cite{K old}] 
\label{lemma:Phi4 inv}
For all $(\eta, \psi) \in H^{m_0}_0(\T^d, c.c.)$ 
in the ball $\| \eta \|_{m_0} \leq \frac14$, 
there exists a unique $(w,z) \in H^{m_0}_0(\T^d, c.c.)$ such that 
$\Phi^{(4)}(w,z) = (\eta,\psi)$, with $\| w \|_{m_0} \leq 2 \| \eta \|_{m_0}$. 
If, in addition, $\eta \in H^s_0$ for some $s > m_0$, then $w$ also belongs to $H^s_0$, 
and $\| w \|_s \leq 2 \| \eta \|_s$.
This defines the continuous inverse map $(\Phi^{(4)})^{-1} : H^s_0(\T^d, c.c.) \cap 
\{ \| \eta \|_{m_0} \leq \frac14 \}$ $\to H^s_0(\T^d, c.c.)$.
\end{lemma}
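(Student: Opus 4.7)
The plan is to realize $(\Phi^{(4)})^{-1}$ via a Banach fixed-point argument on the single equation for $w$ obtained by exploiting the real structure $z=\overline w$, $\psi=\overline\eta$. Writing out $\Phi^{(4)}(w,\overline w) = (\eta,\overline\eta)$ component-by-component, the first component yields
\[
w + \mN(w) = \eta, \qquad \mN(w) := \bigl(A_{12}[w,w] + C_{12}[\overline w,\overline w]\bigr)\overline w,
\]
while the second component is just the complex conjugate of the first (by the symmetric conjugation structure of the definitions \eqref{fix A12}--\eqref{fix C12}, which one verifies on Fourier side). So it suffices to solve this single scalar fixed-point equation $w = \Psi(w) := \eta - \mN(w)$ and then define $z := \overline w$ automatically.

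First I would check self-mapping on the closed ball $B_r := \{ w \in H^{m_0}_0(\T^d,\C) : \|w\|_{m_0} \leq r\}$ with $r := 2\|\eta\|_{m_0}$. Since $m_0 \geq 1$, Lemma \ref{lemma:stime AC} gives
\[
\|\mN(w)\|_{m_0} \leq \tfrac{3}{8} \|w\|_{m_0}^2 \|w\|_{m_0} + \tfrac{1}{16}\|w\|_1^2 \|w\|_{m_0} \leq \tfrac{7}{16}\|w\|_{m_0}^3,
\]
so for $w \in B_r$ with $\|\eta\|_{m_0}\leq \tfrac14$ one has $\|\Psi(w)\|_{m_0} \leq \|\eta\|_{m_0} + \tfrac{7}{16}(2\|\eta\|_{m_0})^3 \leq 2\|\eta\|_{m_0} = r$. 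For contraction, I would expand $\mN(w_1)-\mN(w_2)$ by trilinearity into a sum of terms of the form $A_{12}[w_i,w_j](w_1-w_2)$ etc., and again invoke Lemma \ref{lemma:stime AC} to get $\|\Psi(w_1)-\Psi(w_2)\|_{m_0} \leq C r^2 \|w_1-w_2\|_{m_0}$, which is $\leq \tfrac12 \|w_1-w_2\|_{m_0}$ once $\|\eta\|_{m_0} \leq \tfrac14$ (shrinking $\tfrac14$ if needed, but $\tfrac14$ suffices). Banach's theorem then produces a unique $w \in B_r$ with $\|w\|_{m_0} \leq 2\|\eta\|_{m_0}$.

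For the regularity propagation, I would rerun the same iteration in $H^s_0 \cap B_r$ with $s > m_0$. The crucial input is again Lemma \ref{lemma:stime AC}, which gives the \emph{tame} bound
\[
\|\mN(w)\|_s \leq \tfrac{7}{16} \|w\|_{m_0}^2 \|w\|_s,
\]
so the map $\Psi$ sends $\{w \in B_r : \|w\|_s \leq 2\|\eta\|_s\}$ into itself and is contractive in $\|\cdot\|_s$ thanks to the small factor $\|w\|_{m_0}^2 \leq r^2$. The fixed point coincides, by uniqueness in $H^{m_0}_0$, with the one already found, and satisfies $\|w\|_s \leq 2\|\eta\|_s$.

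The main subtle point is not the iteration itself, which is routine once one has Lemma \ref{lemma:stime AC}, but rather the verification that the real-structure constraint $z = \overline w$ is preserved exactly (not merely approximately) by $\Phi^{(4)}$, so that the single scalar equation above really captures both components and no compatibility obstruction appears. This amounts to checking the identities $\overline{A_{12}[w,w]\,\overline w} = A_{12}[\overline w,\overline w]\,w$ and $\overline{C_{12}[\overline w,\overline w]\,\overline w} = C_{12}[w,w]\,w$ directly on Fourier coefficients, which is immediate from the reality of the denominators $|j|\pm|k|$ and $|j|^2$ in \eqref{fix A12}--\eqref{fix C12}. Continuity of $(\Phi^{(4)})^{-1}$ on the relevant ball then follows from the Lipschitz bound produced by the contraction estimate, applied with $\eta$ varying.
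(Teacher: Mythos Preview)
Your approach is correct and is the standard Banach fixed-point argument one expects here. The present paper does not actually prove this lemma --- it is quoted verbatim from \cite{K old} --- so there is no proof in this paper to compare against; your argument is in line with the cited reference's method (a contraction built on the operator bound of Lemma~\ref{lemma:stime AC}/\ref{lemma:stime MK}).

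One small imprecision worth flagging: your claim that $\Psi$ is ``contractive in $\|\cdot\|_s$ thanks to the small factor $\|w\|_{m_0}^2$'' does not follow directly from the tame bound $\|\mN(w)\|_s \leq \tfrac{7}{16}\|w\|_{m_0}^2\|w\|_s$. When you expand $\mN(w_1)-\mN(w_2)$ by trilinearity, the terms in which the increment $w_1-w_2$ lands in one of the first two slots of $A_{12}$ or $C_{12}$ (rather than in the last argument $h$) are estimated by Lemma~\ref{lemma:stime AC} as
\[
\|A_{12}[w_1-w_2,w_i]\,\overline{w_j}\|_s \leq \tfrac{3}{8}\,\|w_1-w_2\|_{m_0}\,\|w_i\|_{m_0}\,\|w_j\|_s,
\]
which contains the factor $\|w_j\|_s$ (not small) rather than $\|w_1-w_2\|_s$. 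So $\Psi$ is not a contraction for $\|\cdot\|_s$ on that ball in the naive sense. The repair is routine: your self-mapping bound shows the iterates stay bounded in $H^s$, and the $H^{m_0}$-contraction makes them Cauchy in $H^{m_0}$; the limit then lies in $H^s$ with $\|w\|_s \leq 2\|\eta\|_s$ by lower semicontinuity of the norm (equivalently, Fatou on Fourier coefficients). With this adjustment the proof is complete.
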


Under the change of variables $(\eta,\psi) = \Phi^{(4)}(w,z)$,
it is proved in \cite{K old} that system \eqref{syst 6 dic} becomes 
\begin{align} 
\pa_t (w,z) 
& = (I + K(w,z))^{-1} X(\Phi^{(4)}(w,z))
=: X^+(w,z) 
\notag \\
& = \big( 1 + \mP(w,z) \big) \mD_1(w,z) + X_3^+(w,z)
+ X_{\geq 5}^+(w,z)
\label{def X+}
\end{align}
where
\begin{equation*} 
\mP(w,z) := \sqrt{1 + 2 P(\Phi^{(4)}(w,z))} \, - 1,
\end{equation*}
$X_3^+(w,z)$ has components
\begin{align*} 
(X_3^+)_1(w,z) 
& := - \frac{i}{4} \sum_{j,k \neq 0,\, |k| = |j|} w_j w_{-j} |j|^2 z_k e^{ik \cdot x},
\\ 
(X_3^+)_2(w,z) 
& := \frac{i}{4} \sum_{j,k \neq 0,\, |k| = |j|} z_j z_{-j} |j|^2 w_k e^{ik \cdot x},
\end{align*}
and
\begin{align*} 
X_{\geq 5}^+(w,z)
& := K(w,z) \big( I+K(w,z) \big)^{-1} \big( \mB_3(w,z) - X_3^+(w,z) \big)
+ \mR_{\geq 5}^+(w,z)
\notag \\ & \quad \ 
- \mP(w,z) \big( I+K(w,z) \big)^{-1} \big( \mB_3(w,z) - X_3^+(w,z) \big)
\end{align*}
with
\begin{align*}
\mR_{\geq 5}^+(w,z)
& := (I + K(w,z))^{-1} \mR_{\geq 5} ( \Phi^{(4)}(w,z))
+ [ \mB_3 ( \Phi^{(4)}(w,z)) - \mB_3(w,z) ]
\notag \\ & \qquad 
+ \big( - K(w,z) + \tilde K(w,z) \big) \mB_3 ( \Phi^{(4)}(w,z)),
\end{align*}
$\mR_{\geq 5}$ defined in \eqref{def mR geq 5}.

\begin{lemma}[Lemma 4.5 of \cite{K old}] 
\label{lemma:3001.2}
The maps $M(w, \overline{w})$, $K(w,\overline{w})$, 
and the transformation $\Phi^{(4)}$ 
preserve the structure of real vector field \eqref{real vector field}.
Hence $X^+$ defined in \eqref{def X+} satisfies \eqref{real vector field}.
\end{lemma}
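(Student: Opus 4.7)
The strategy is to reduce the claim to a single Fourier-level identity for the building blocks $A_{12}$ and $C_{12}$, and then propagate this identity through $M$, $K$, $\Phi^{(4)}$, and finally through the composition that defines $X^+$. The condition \eqref{real vector field} is a statement about compatibility with the complex conjugation of the first argument, so the entire matter boils down to checking a symmetry at the Fourier coefficient level.

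First I would verify that for any complex functions $u,v,h$ one has
\[
\overline{A_{12}[u,v]\,h} = A_{12}[\overline{u},\overline{v}]\,\overline{h},
\qquad
\overline{C_{12}[u,v]\,h} = C_{12}[\overline{u},\overline{v}]\,\overline{h}.
\]
This follows directly from the definitions \eqref{fix A12}, \eqref{fix C12}: the denominators $|j|\pm|k|$ and the constraint $|j| \neq |k|$ are invariant under $j\to -j$ and $k\to -k$, while $\overline{u_j} = (\overline{u})_{-j}$, $\overline{v_{-j}}=(\overline{v})_j$, $\overline{h_k}=(\overline{h})_{-k}$, so after the change of variables the series reassembles into the corresponding sum for the conjugated inputs. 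Once this identity is in hand, the block-antidiagonal form of $M(w,z)$ in \eqref{def M}, which simply swaps the roles of $w$ and $z$ between the two rows, immediately gives that $M(w,\overline w)$ sends pairs of complex conjugate functions to pairs of complex conjugate functions. The same check applied to each of the four terms in \eqref{def E} shows that $E(w,\overline w)$ enjoys the same property; hence so does $K=M+E$. Applied to the vector $(w,\overline w)$ itself, this yields that $\Phi^{(4)}(w,\overline w)\in H^s_0(\T^d,c.c.)$, i.e.\ $\Phi^{(4)}$ preserves the real subspace.

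Next, to handle $(I+K(w,\overline w))^{-1}$, I would use the Neumann expansion $I - K + \widetilde K$ from Lemma \ref{lemma:stime MK}, valid for $\|w\|_{m_0}$ small enough: since each power $(-K(w,\overline w))^n$ preserves the conjugation symmetry, the sum does too. The vector field $X$ in \eqref{syst DBR} satisfies \eqref{real vector field} by inspection of \eqref{syst 6 dic} (each summand is a Fourier multiplier or a scalar factor multiplying $\eta$ or $\psi$, with coefficients that are real or whose signs flip in the two rows), so, writing $X^+ = (I+K)^{-1}\,X\circ\Phi^{(4)}$, the composition of three maps all compatible with the conjugation symmetry yields the desired property for $X^+$. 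The main point requiring care is the bookkeeping of the index changes $j\to -j, k\to -k$ in the sums defining $A_{12}$ and $C_{12}$, specifically the combination $u_j v_{-j}$ (which becomes $(\overline u)_{-j}(\overline v)_j$, i.e.\ the same form with $u,v$ replaced by their conjugates after relabeling); once this first identity is established, every other step is essentially formal.
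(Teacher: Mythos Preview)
Your argument is correct. The identities $\overline{A_{12}[u,v]h}=A_{12}[\overline u,\overline v]\,\overline h$ and the analogous one for $C_{12}$ follow from the index change $(j,k)\to(-j,-k)$ exactly as you describe, and the block structure of $M$ and $E$ in \eqref{def M}, \eqref{def E} then gives the claim for $M(w,\overline w)$, $E(w,\overline w)$, and hence $K(w,\overline w)$. Your use of the Neumann series from Lemma~\ref{lemma:stime MK} to propagate the conjugation symmetry to $(I+K)^{-1}$ is fine; alternatively one can observe that $K(w,\overline w)$ commutes with the involution $(f,g)\mapsto(\overline g,\overline f)$, and any invertible operator commuting with an involution has an inverse that does too---this avoids the smallness restriction, but for the purposes of the lemma either route suffices. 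The final composition $X^+=(I+K)^{-1}X\circ\Phi^{(4)}$ then inherits \eqref{real vector field} as you say.

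Note that the present paper does not actually give a proof of this lemma: it is quoted verbatim from \cite{K old} (Lemma~4.5 there), so there is no in-paper argument to compare against. Your proof is the natural direct verification and is what one would expect the original proof in \cite{K old} to be.
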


The terms $(1+\mP) \mD_1$ and $X_3^+$ in \eqref{def X+}
give no contributions to the energy estimate, 
because, as one can check directly, 
\begin{equation*} 
\la \Lm^s (1 + \mP) (- i \Lm w) , \Lm^s z \ra 
+ \la \Lm^s w , \Lm^s (1 + \mP) i \Lm z \ra = 0
\end{equation*}
and
\begin{equation*}
\la \Lm^s (X_3^+)_1, \Lm^s z \ra + \la \Lm^s w , \Lm^s (X_3^+)_2 \ra 
= 0.
\end{equation*}
Similarly, also $\mP X_3^+$ gives no contribution to the energy estimate, 
because  
\[
\la \Lm^s (\mP X_3^+)_1, \Lm^s z \ra + \la \Lm^s w , \Lm^s (\mP X_3^+)_2 \ra 
= \mP \la \Lm^s (X_3^+)_1, \Lm^s z \ra 
+ \mP \la \Lm^s w , \Lm^s (X_3^+)_2 \ra = 0.
\]

\begin{lemma}[Lemma 4.6 of \cite{K old}] 
\label{lemma:elementary}
For all $s \geq 0$, all pairs of complex conjugate functions $(w,z)$, 
one has
\begin{equation*} 
\| \mB_3(w,z) \|_s \leq \frac12 \| w \|_1^2 \| w \|_s, 
\quad 
\| X_3^+(w,z) \|_s \leq \frac14 \| w \|_1^2 \| w \|_s, 
\end{equation*}
and, for $\| w \|_{m_0} \leq \frac12$, for all complex functions $h$,  
\begin{align} \label{stima mP}
\| \mP(w,z) h \|_s & = \mP(w,z) \| h \|_s,
\quad 
0 \leq \mP(w,z) \leq C \| w \|_{\frac12}^2,
\\ 
\| \mR_{\geq 5}(w,z) \|_s 
& \leq 2 P(w,z) \| \mB_3(w,z) \|_s 
\leq C \| w \|_{\frac12}^2 \| w \|_1^2 \| w \|_s
\notag
\end{align}
where $\mR_{\geq 5}$ is defined in \eqref{def mR geq 5}
and $C$ is a universal constant.
\end{lemma}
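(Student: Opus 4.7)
The plan is to bound each of the four quantities by direct Fourier-side computations, exploiting the fact that every operator coefficient appearing is either a scalar (constant in $x$) or a simple Fourier multiplier, so that no commutator structure is required. For $\mB_3$, I would observe that the vector part is $(\psi,\eta)=(\overline{w},w)$, whose $H^s_0(\T^d,c.c.)$ norm is $\|w\|_s$, so the claim reduces to bounding the scalar $\frac{i}{4}(\la\Lm\psi,\Lm\psi\ra-\la\Lm\eta,\Lm\eta\ra)$. Writing $\la\Lm w,\Lm w\ra=\sum_j|j|^2 w_jw_{-j}$ and using $|w_jw_{-j}|\leq\frac12(|w_j|^2+|w_{-j}|^2)$ gives $|\la\Lm w,\Lm w\ra|\leq\|w\|_1^2$, and the analogous bound holds for $\la\Lm\overline{w},\Lm\overline{w}\ra$; combined with the prefactor $\frac14$ this yields the constant $\frac12$. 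For $X_3^+$, the spectral constraint $|k|=|j|$ in $(X_3^+)_1$ allows me to pull $|j|^2=|k|^2$ outside the inner sum, so the $k$-th Fourier coefficient is $-\frac{i}{4}|k|^2 z_k\sum_{|j|=|k|}w_jw_{-j}$. The inner sum is bounded in modulus by $\sum_{|j|=|k|}|w_j|^2$, and $|k|^2\sum_{|j|=|k|}|w_j|^2\leq\|w\|_1^2$ because this is a single sphere-by-sphere piece of $\|w\|_1^2$. Squaring, summing $|k|^{2s}|\cdot|^2$ in $k$, taking the square root, and using $\|z\|_s=\|w\|_s$ produces $\frac14\|w\|_1^2\|w\|_s$; the bound for $(X_3^+)_2$ is identical.

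For $\mP$, the key observation is that $P(\eta,\psi)$ defined in Lemma \ref{lemma:Phi3 inv} depends only on the scalar quantity $Q(\eta,\psi)$, which is a spatial integral and hence independent of $x$. The same is then true of $\mP(w,z)=\sqrt{1+2P(\Phi^{(4)}(w,z))}-1$, so $\mP h$ is just scalar multiplication and $\|\mP h\|_s=\mP\|h\|_s$. Nonnegativity follows because for a complex conjugate pair $(\eta,\overline{\eta})$ the sum $\eta+\overline{\eta}$ is real, so $Q=\frac14\|\eta+\overline\eta\|_{1/2}^2\geq 0$; since $\ph$ is the inverse of $x\mapsto x\sqrt{1+2x}$ it satisfies $\ph(0)=0$ and is increasing near the origin, hence $P=\ph(Q)\geq 0$. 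From smoothness of $\ph$ at zero one gets $\ph(Q)\lesssim Q\leq\|\eta\|_{1/2}^2$, and combining with the control $\|\Phi^{(4)}(w,z)\|_{1/2}\lesssim\|w\|_{1/2}$ supplied by Lemma \ref{lemma:stime MK} for $\|w\|_{m_0}\leq\frac12$, together with the elementary $\sqrt{1+2P}-1\leq P$ for $P\geq 0$, I conclude $0\leq\mP(w,z)\leq C\|w\|_{1/2}^2$.

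The bound on $\mR_{\geq 5}$ is then essentially free. Reading off \eqref{def mR geq 5}, one has the exact identity $\mR_{\geq 5}(\eta,\psi)=-\frac{2P(\eta,\psi)}{1+2P(\eta,\psi)}\mB_3(\eta,\psi)$, and the scalar prefactor lies in $[0,2P]$ because $P\geq 0$. Hence $\|\mR_{\geq 5}\|_s\leq 2P\,\|\mB_3\|_s$, and substituting the already proved bounds $P\leq C\|w\|_{1/2}^2$ and $\|\mB_3\|_s\leq\frac12\|w\|_1^2\|w\|_s$ closes the argument. The only mild care I expect to need is verifying that $P$ is small enough for $\ph$ to be in its smooth regime (guaranteed by $\|w\|_{m_0}\leq\frac12$ through Lemma \ref{lemma:stime MK}) and keeping track of the identifications $\psi=\overline\eta$, $z=\overline{w}$ when estimating the scalar coefficient in $\mB_3$; neither constitutes a real obstacle to the proof.
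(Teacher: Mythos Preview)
Your proof is correct. The paper does not actually prove this lemma here; it is quoted verbatim from the earlier work \cite{K old} (as the label ``Lemma 4.6 of \cite{K old}'' indicates), so there is no in-paper argument to compare against. Your direct Fourier-side computation for $\mB_3$ and $X_3^+$, the observation that $\mP$ is a nonnegative scalar multiplication, and the exact identity $\mR_{\geq 5}=-\frac{2P}{1+2P}\mB_3$ are precisely the natural route and match what one expects the original proof to be. One tiny clarification: in the $\mR_{\geq 5}$ step the relevant bound is $P(w,z)\leq C\|w\|_{1/2}^2$ (evaluated at $(w,z)$ itself, not at $\Phi^{(4)}(w,z)$), which follows even more directly from $Q(w,z)\leq\|w\|_{1/2}^2$ and $\ph(Q)\lesssim Q$ without invoking Lemma \ref{lemma:stime MK}; your argument already contains all the ingredients for this.
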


\begin{lemma}[Lemma 4.7 of \cite{K old}]  
\label{lemma:4.7 of K old} 
For all $s \geq 0$, all $(w,z) \in H^s_0(\T^d, c.c.) \cap 
H^{m_0}_0(\T^d, c.c.)$ with $\| w \|_{m_0} \leq \frac12$, one has 
\begin{equation} \label{stima X+ geq 5}
\| X_{\geq 5}^+ (w,z) \|_s 
\leq C \| w \|_1^2 \| w \|_{m_0}^2 \| w \|_s
\end{equation}
where $C$ is a universal constant.
\end{lemma}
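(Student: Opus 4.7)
The plan is to control each of the summands in the decomposition of $X_{\geq 5}^+$ separately, using the cubic bounds on $\mB_3$, $X_3^+$, $\mP$ and $\mR_{\geq 5}$ (Lemma \ref{lemma:elementary}) together with the operator bounds on $M$, $K$ and $(I+K)^{-1}$ (Lemma \ref{lemma:stime MK}). The general idea is that each term is a product of a cubic factor and a quadratic-in-$\|w\|_{m_0}$ factor: the latter comes either from $K$, from $\mP$, or from the cubic correction in $\Phi^{(4)} = (I + M)(\,\cdot\,)$.

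First I would handle the two ``outer'' terms $K(I+K)^{-1}(\mB_3 - X_3^+)$ and $-\mP(I+K)^{-1}(\mB_3 - X_3^+)$. By Lemma \ref{lemma:elementary}, $\|\mB_3(w,z) - X_3^+(w,z)\|_\sigma \lesssim \|w\|_1^2 \|w\|_\sigma$ for $\sigma \in \{m_0, s\}$. Inserting this into the tame estimate $\|(I+K)^{-1}(\a,\b)\|_\sigma \lesssim \|\a\|_\sigma + \|w\|_{m_0}\|w\|_\sigma \|\a\|_{m_0}$ from Lemma \ref{lemma:stime MK} and using $\|w\|_{m_0} \leq \frac12$, one gets $\|(I+K)^{-1}(\mB_3 - X_3^+)\|_s \lesssim \|w\|_1^2 \|w\|_s$ and similarly $\lesssim \|w\|_1^2\|w\|_{m_0}$ at regularity $m_0$. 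Applying $K$ then contributes a factor $\|w\|_{m_0}^2$ (both pieces of the $K$-bound in Lemma \ref{lemma:stime MK} combine to yield exactly this), while multiplying by $\mP$ contributes $\mP \lesssim \|w\|_{1/2}^2 \leq \|w\|_{m_0}^2$. Both terms are therefore $\lesssim \|w\|_1^2 \|w\|_{m_0}^2 \|w\|_s$.

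Next I would treat $\mR_{\geq 5}^+$, expanding it into its three pieces. For $(I+K)^{-1}\mR_{\geq 5}(\Phi^{(4)}(w,z))$, I first observe that $\|\Phi^{(4)}(w,z)\|_\sigma \leq (1 + \tfrac{7}{16}\|w\|_{m_0}^2)\|w\|_\sigma \leq 2\|w\|_\sigma$ (directly from the $M$-bound in Lemma \ref{lemma:stime MK}); then Lemma \ref{lemma:elementary} gives $\|\mR_{\geq 5}(\Phi^{(4)}(w,z))\|_s \lesssim \|w\|_{1/2}^2 \|w\|_1^2 \|w\|_s$, and the tame bound on $(I+K)^{-1}$ absorbs harmlessly. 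For the third piece $(-K + \tilde K)\mB_3(\Phi^{(4)})$, the factor $K$ again supplies $\|w\|_{m_0}^2$, while $\tilde K = \sum_{n\geq 2}(-K)^n$ is even smaller; combined with $\|\mB_3(\Phi^{(4)})\|_s \lesssim \|w\|_1^2 \|w\|_s$, the needed estimate follows.

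The delicate step is the middle piece $\mB_3(\Phi^{(4)}(w,z)) - \mB_3(w,z)$, where two extra powers of $\|w\|_{m_0}$ must be extracted. The plan is to exploit that $\mB_3$ is a (symmetric) trilinear form in its argument $(\eta,\psi)$ and that $\Phi^{(4)}(w,z) - (w,z) = M(w,z)(w,z)^T$, so that multilinear expansion yields a telescoping sum of three summands, each containing at least one factor of $M(w,z)(w,z)^T$. The $M$-bound in Lemma \ref{lemma:stime MK} estimates this correction by $\tfrac{7}{16}\|w\|_{m_0}^2$ times whatever Sobolev norm is on its argument, while the remaining two slots of the trilinear form contribute $\|w\|_1$ each, for a total $\lesssim \|w\|_1^2 \|w\|_{m_0}^2 \|w\|_s$. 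The main obstacle is the bookkeeping: one must verify from the explicit form of $\mB_3$ that in each of the three telescoping summands the top-regularity $\|\cdot\|_s$ slot can be placed on a favorable argument (on $M(w,z)(w,z)^T$ when two inputs carry only $\|\cdot\|_1$, and on $w$ or $\Phi^{(4)}$ when the cubic correction carries only $\|\cdot\|_1$ or $\|\cdot\|_{m_0}$), which is possible because the $\|\cdot\|_s$-factor in Lemma \ref{lemma:elementary} arises only from the vector $(\psi,\eta)$ and not from the scalar $\langle \Lm \psi,\Lm\psi\rangle - \langle\Lm\eta,\Lm\eta\rangle$. Summing the four estimates yields \eqref{stima X+ geq 5}.
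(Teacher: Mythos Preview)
Your proposal is correct and follows the same route as the proof in \cite{K old} (which the present paper simply cites): one decomposes $X_{\geq 5}^+$ and $\mR_{\geq 5}^+$ into their constituent summands and estimates each one by combining the cubic bounds on $\mB_3$, $X_3^+$, $\mP$, $\mR_{\geq 5}$ (Lemma~\ref{lemma:elementary}) with the operator bounds on $M$, $K$, $(I+K)^{-1}$ (Lemma~\ref{lemma:stime MK}), extracting the extra $\|w\|_{m_0}^2$ from the multilinear expansion of $\mB_3(\Phi^{(4)}) - \mB_3$ exactly as you describe.
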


In \cite{K mem} it is calculated that 
\begin{equation} \label{X+ new}
X^+(w,z) = (1 + \mP(w,z)) \big( \mD_1(w,z) + X_3^+(w,z) \big) 
+ X_5^+(w,z) + X_{\geq 7}^+(w,z)
\end{equation}
where $X_5^+(w,z)$ are terms of quintic homogeneity order 
extracted from $X^+_{\geq 5}(w,z)$, namely 
\begin{align} 
X^+_5(w,z) & := 
\mB_3'(w,z) M(w,z) \Big( \begin{matrix} w \\ z \end{matrix} \Big)
- K(w,z) X_3^+(w,z) 
- 3 Q(w,z) \mB_3(w,z),
\label{X+ 5}
\end{align}
and 
\begin{equation} \label{X+ 57}
X^+_{\geq 7}(w,z) := 
X^+_{\geq 5}(w,z) - \mP(w,z) X_3^+(w,z) - X^+_5(w,z).
\end{equation}
The terms $(1 + \mP(w,z)) ( \mD_1(w,z) + X_3^+(w,z) )$ in \eqref{X+ new} 
give no contributions to the energy estimate.  

\begin{lemma}[Lemma 4.8 of \cite{K mem}] 
\label{lemma:20.05.2020} 
For all $s \geq 0$, all $(w,z) \in H^s_0(\T^d, c.c.) \cap 
H^{m_0}_0(\T^d, c.c.)$ with $\| w \|_{m_0} \leq \frac12$, one has 
\begin{equation*} 
\| X_{5}^+ (w,z) \|_s 
\leq C \| w \|_{m_0}^4 \| w \|_s,
\quad \ 
\| X^+_{\geq 7}(w,z) \|_s \leq C \| w \|_{m_0}^6 \| w \|_s,
\end{equation*}
where $C$ is a universal constant.
\end{lemma}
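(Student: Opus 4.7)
The plan is to bound $X_5^+$ and $X_{\geq 7}^+$ separately. Since $X_5^+$ is given by the explicit formula \eqref{X+ 5}, I would simply estimate each of its three summands using the multilinear bounds already in hand. For $X_{\geq 7}^+$, defined via \eqref{X+ 57} as the difference $X_{\geq 5}^+ - \mP X_3^+ - X_5^+$, the strategy is to Taylor-expand every building block of $X_{\geq 5}^+$ and of $\mP X_3^+$ to leading order, verify that the quintic contributions assemble exactly to $X_5^+$, and then bound the septic-and-higher remainders.

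For the estimate on $X_5^+$, I would treat each summand in turn. Set $h := M(w,z) \binom{w}{z}$ and bound $\| h \|_s \lesssim \| w \|_{m_0}^2 \| w \|_s$ via Lemma \ref{lemma:stime MK}. Exploiting the trilinear structure of $\mB_3$ together with Lemma \ref{lemma:elementary}, one gets $\| \mB_3'(w,z)[h] \|_s \lesssim \| w \|_1 ( \| w \|_1 \| h \|_s + \| w \|_s \| h \|_1 )$; combined with $m_0 \geq 1$ this handles the first summand. The second summand $K(w,z) X_3^+(w,z)$ is bounded by combining the $K$-bound of Lemma \ref{lemma:stime MK} with the $X_3^+$-bound of Lemma \ref{lemma:elementary}. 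The third summand $Q(w,z) \mB_3(w,z)$ is controlled by $| Q(w,z) | \lesssim \| w \|_{\frac12}^2$ together with $\| \mB_3(w,z) \|_s \lesssim \| w \|_1^2 \| w \|_s$. All three bounds deliver $\lesssim \| w \|_{m_0}^4 \| w \|_s$.

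For $X_{\geq 7}^+$, I would expand $(I+K)^{-1} = I - K + \widetilde K$ in the two occurrences of $(I+K)^{-1}(\mB_3 - X_3^+)$ in $X_{\geq 5}^+$, Taylor-expand $\mB_3(\Phi^{(4)}(w,z)) = \mB_3(w,z) + \mB_3'(w,z)[h] + R_1$ (and similarly $\mR_{\geq 5} \circ \Phi^{(4)}$), and use $\mP(w,z) = Q(w,z) + O(\| w \|_{\frac12}^4)$, which follows from $\mP = \sqrt{1+2P} - 1$, $P = \ph(Q)$, $\ph(y) = y + O(y^2)$. The crucial identity is
\begin{equation*}
\mR_{\geq 5}(w,z) = - 2\, Q(w,z)\, \mB_3(w,z) + O\bigl( \| w \|_{\frac12}^4 \| w \|_1^2 \| w \|_s \bigr),
\end{equation*}
obtained by Taylor-expanding $\frac{-iP}{2(1+2P)} = -Q + O(Q^2)$ in \eqref{def mR geq 5} and recognizing the vector $\frac{i}{4}( \la \Lm z, \Lm z \ra - \la \Lm w, \Lm w \ra ) \binom{z}{w} = \mB_3(w,z)$. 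Summing the quintic contributions of $X_{\geq 5}^+ - \mP X_3^+$ from $K\mB_3 - K X_3^+$, from $\mR_{\geq 5}^+$ (namely $- 2 Q \mB_3 + \mB_3'[h] - K \mB_3$), from $- \mP(\mB_3 - X_3^+) = - Q \mB_3 + Q X_3^+$, and from $-\mP X_3^+ = -Q X_3^+$, one finds the total is exactly $\mB_3'[h] - K X_3^+ - 3 Q \mB_3 = X_5^+$, so the quintic part is cancelled by the subtraction in \eqref{X+ 57}.

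What remains in $X_{\geq 7}^+$ is of homogeneity at least seven: either it contains two copies of $K$ or $M$ (for instance $\widetilde K\, \mB_3(\Phi^{(4)}(w,z))$ with $\widetilde K$ quartic by Lemma \ref{lemma:stime MK}, or second-order Taylor remainders of $\mB_3 \circ \Phi^{(4)}$), or it carries a factor $\mP^2$ or $Q \mP$; in each case the bounds of Lemmas \ref{lemma:stime MK} and \ref{lemma:elementary} yield $\lesssim \| w \|_{m_0}^6 \| w \|_s$. The main obstacle I expect is the careful bookkeeping of the quintic cancellations: every quintic monomial produced by the Neumann expansion of $(I+K)^{-1}$, the Taylor expansion of $\mB_3 \circ \Phi^{(4)}$, and the leading order of $\mR_{\geq 5}$ must be tracked and shown to be accounted for in $X_5^+$, so that the residue is genuinely septic.
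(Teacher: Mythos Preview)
The paper does not prove this lemma: it is simply quoted as Lemma 4.8 of \cite{K mem}, with no argument reproduced in the present text. So there is no in-paper proof to compare your proposal against.

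That said, your proposal is a correct and natural proof sketch. The bound on $X_5^+$ via \eqref{X+ 5} and Lemmas \ref{lemma:stime MK}, \ref{lemma:elementary} is straightforward. For $X_{\geq 7}^+$, your strategy---expand $(I+K)^{-1}=I-K+\widetilde K$, Taylor-expand $\mB_3\circ\Phi^{(4)}$, $\mR_{\geq 5}\circ\Phi^{(4)}$, and $\mP$ to leading order, and verify that the quintic contributions assemble exactly to $X_5^+$---is the right one. Your key identity $\mR_{\geq 5}(w,z)=-2Q(w,z)\mB_3(w,z)+O(\|w\|_{\frac12}^4\|w\|_1^2\|w\|_s)$ is correct (indeed $\mR_{\geq 5}=\frac{-2P}{1+2P}\mB_3$ from \eqref{def mR geq 5}, with $P=Q+O(Q^2)$), and the quintic bookkeeping you describe does close: the sum $K\mB_3-KX_3^+-2Q\mB_3+\mB_3'[h]-K\mB_3-Q\mB_3+QX_3^+$ minus the leading part $QX_3^+$ of $\mP X_3^+$ gives exactly $\mB_3'[h]-KX_3^+-3Q\mB_3=X_5^+$. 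The residual terms all carry at least two factors of $K$, $M$, $Q$, or $\mP$, and the tame estimates of Lemmas \ref{lemma:stime MK} and \ref{lemma:elementary} then yield the $\|w\|_{m_0}^6\|w\|_s$ bound. This is almost certainly the same argument as in \cite{K mem}.
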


It is calculated in \cite{K mem} that the first component $(X^+_5(w,z))_1$ 
of $X_5^+(w,z)$ is the sum of the following eight terms:
\begin{align*}
Y^{(4)}_{11}[w,w,w,w] w 
& := - \frac{i}{64} \sum_{j,\ell,k} 
\Big( \frac{|j|^2 |\ell|^2}{|j| + |k|} + \frac{|j|^2 |\ell|^2}{|\ell| + |k|} \Big) 
w_{j} w_{-j} w_{\ell} w_{-\ell} w_k e^{ik \cdot x},
\\
Y^{(2)}_{11}[w,w,z,z] w
& := \frac{i}{32} \sum_{j,\ell,k} |j|^2 |\ell|^2 
\Big( \frac{- \d_{|\ell|}^{|k|} (1 - \d_{|j|}^{|k|})}{|j| - |k|}
+ \frac{1}{|j|+|k|} 
- \frac{(1 - \d_{|\ell|}^{|k|})}{|\ell| - |k|} \Big) 
w_j w_{-j} z_\ell z_{-\ell} w_k e^{ik \cdot x},
\\
Y^{(0)}_{11}[z,z,z,z] w 
& := \frac{i}{64} \sum_{j,\ell,k} |j|^2 |\ell|^2 
\Big( - \frac{\d_{|\ell|}^{|k|} + \d_{|j|}^{|k|}}{|j| + |\ell|}
+ \frac{(1 - \d_{|j|}^{|k|})}{|j| - |k|} 
+ \frac{(1 - \d_{|\ell|}^{|k|})}{|\ell| - |k|} \Big) 
z_j z_{-j} z_\ell z_{-\ell} w_k e^{ik \cdot x},
\\
Y^{(4)}_{12} [w,w,w,w] z 
& := \frac{3 i}{32} \sum_{j,\ell,k} |j| |\ell| (|j| + |\ell|) 
w_j w_{-j} w_\ell w_{-\ell} z_k e^{ik \cdot x},
\\
Y^{(3)}_{12} [w,w,w,z] z
& := \frac{i}{16} \sum_{j,\ell,k} |j|^2 |\ell|
\Big( \frac{|\ell| \d_{|\ell|}^{|j|} (1 - \d_{|\ell|}^{|k|}) }{|\ell| - |k|}
+ 6 
+ \frac{|\ell|}{|\ell| + |j|}
+ \frac{|\ell| (1 - \d_{|\ell|}^{|j|}) }{|\ell| - |j|} \Big) 
w_j w_{-j} w_\ell z_{-\ell} z_k e^{ik \cdot x},
\\
Y^{(2)}_{12} [w,w,z,z] z
& := \frac{3 i}{16} \sum_{j,\ell,k} |j| |\ell| 
(|j| - |\ell|) 
w_j w_{-j} z_\ell z_{-\ell} z_k e^{ik \cdot x},
\\
Y^{(1)}_{12} [w,z,z,z] z 
& := \frac{i}{16} \sum_{j,\ell,k} |j| |\ell|^2
\Big( \frac{- |j| \d_{|j|}^{|\ell|} }{|j| + |k|} 
- 6 + \frac{|j| (1 - \d_{|j|}^{|\ell|}) }{|\ell| - |j|}
- \frac{|j|}{|\ell| + |j|} \Big)  
w_j z_{-j} z_\ell z_{-\ell} z_k e^{ik \cdot x},
\\
Y^{(0)}_{12} [z,z,z,z] z 
& := - \frac{3i}{32} \sum_{j,\ell,k} |j| |\ell| (|j| + |\ell|)
z_j z_{-j} z_\ell z_{-\ell} z_k e^{ik \cdot x},
\end{align*}
where $\d_{|k|}^{|j|}$ is the Kronecker delta, 
and when a coefficient is a fraction of the type $0/0$, it must be taken as zero
(this notation just avoids writing sums with several different restrictions  
on the summation set).

The second component $(X^+_5(w,z))_2$ 
of $X_5^+(w,z)$ is deduced from the first one by the real structure 
\eqref{real vector field}.

\subsection{Normal form: second step}
\label{sec:second step}

In \cite{K mem} we define the transformation 
\begin{equation} \label{def Phi5}
\begin{pmatrix} w \\ z \end{pmatrix} 
= \Phi^{(5)}(u,v) 
:= (I + \mM(u,v)) \begin{pmatrix} u \\ v \end{pmatrix},
\end{equation}
where 
\begin{equation} \label{def mM}
\mM(u,v) = \mA[u,u,u,u] + \mB[u,u,u,v] 
+ \mC[u,u,v,v] + \mD[u,v,v,v] + \mF[v,v,v,v],
\end{equation}
\[
\mA[u,u,u,u] = 
\begin{pmatrix} 
\mA_{11}[u,u,u,u] & \mA_{12}[u,u,u,u] \\ 
\mA_{21}[u,u,u,u] & \mA_{22}[u,u,u,u] 
\end{pmatrix},
\]
and similarly for the other terms in \eqref{def mM}; 
also,  
\[
\mA_{11}[u^{(1)} , u^{(2)}, u^{(3)}, u^{(4)}] h
:= \sum_{j,\ell, k} u^{(1)}_j u^{(2)}_{-j} u^{(3)}_{\ell} u^{(4)}_{-\ell} h_k \, a_{11}(j,\ell,k)
\, e^{ik \cdot x}
\]
for all $u^{(1)}, \ldots, u^{(4)}, h$, 
so that $\mA_{11}$ is determined by the coefficients 
$a_{11}(j,\ell,k)$, 
and similarly for all the other operators.
In \cite{K mem} we calculate the coefficients of the normal form transformations, 
which are 
\begin{align} 
\label{def a11 ottobrata}
a_{11}(j,\ell,k) 
& := \frac{ |j|^2 |\ell|^2}{128 (|j| + |\ell|)}  
\Big( \frac{1}{|j| + |k|} + \frac{1}{|\ell| + |k|} \Big),
\\
\label{def b11 ottobrata}
b_{11}(j,\ell,k) 
& := 0, 
\\
\label{def c11 ottobrata}
c_{11}(j,\ell,k) 
& := \frac{1}{64} |j|^2 |\ell|^2 
\Big( \frac{- \d_{|\ell|}^{|k|} (1 - \d_{|j|}^{|k|})}{|j| - |k|}
+ \frac{1}{|j|+|k|} 
- \frac{(1 - \d_{|\ell|}^{|k|})}{|\ell| - |k|} \Big)
\frac{1 - \d_{|j|}^{|\ell|} }{|\ell| - |j|},
\\ 
\label{def d11 ottobrata}
d_{11}(j,\ell,k) 
& := 0, 
\\
\label{def f11 ottobrata}
f_{11}(j,\ell,k) 
& := \frac{1}{128} 
\Big( - \frac{\d_{|\ell|}^{|k|} + \d_{|j|}^{|k|}}{|j| + |\ell|}
+ \frac{(1 - \d_{|j|}^{|k|})}{|j| - |k|} 
+ \frac{(1 - \d_{|\ell|}^{|k|})}{|\ell| - |k|} \Big) \frac{|j|^2 |\ell|^2 }{|j|+|\ell|},
\\
\label{def a12 ottobrata}
a_{12}(j,\ell,k) 
& := \frac{3}{64} |j| |\ell| (|j| + |\ell|)
\frac{(1 - \d_{|k|}^{|j|+|\ell|} )}{|k| - |j| - |\ell|} ,
\\
\label{def b12 ottobrata}
b_{12}(j,\ell,k) 
& := \frac{|j|^2 |\ell|}{32} 
\Big( \frac{|\ell| \d_{|\ell|}^{|j|} (1 - \d_{|\ell|}^{|k|}) }{|\ell| - |k|}
+ 6
+ \frac{|\ell|}{|\ell| + |j|}
+ \frac{|\ell| (1 - \d_{|\ell|}^{|j|}) }{|\ell| - |j|} \Big) 
\frac{1 - \d_{|j|}^{|k|} }{|k| - |j|},
\\
\label{def c12 ottobrata}
c_{12}(j,\ell,k) 
& := \frac{3}{32} |j| |\ell| (|j| - |\ell|) 
\frac{1 - \d_{|k|}^{|j| - |\ell|} }{|k| - |j| + |\ell|},
\\
\label{def d12 ottobrata}
d_{12}(j,\ell,k) 
& := \frac{|j| |\ell|^2}{32 (|k| + |\ell|)}
\Big( \frac{- |j| \d_{|j|}^{|\ell|} }{|j| + |k|} 
- 6 + \frac{|j| (1 - \d_{|j|}^{|\ell|}) }{|\ell| - |j|}
- \frac{|j|}{|\ell| + |j|} \Big),
\\ 
\label{def f12 ottobrata}
f_{12}(j,\ell,k) 
& := - \frac{3 |j| |\ell| (|j| + |\ell|) }{64 (|k| + |j| + |\ell|)},
\end{align}
with the same meaning of $0/0$ as above.
The differential of $\Phi^{(5)}$ is 
\begin{equation} \label{def mK}
(\Phi^{(5)})'(u,v) = I + \mK(u,v), 
\quad \ 
\mK(u,v) = \mM(u,v) + \mE(u,v)
\end{equation}
where 
\begin{align}
& \mE(u,v) \Big( \begin{matrix} \a \\ \b \end{matrix} \Big)
:= \{ 
2 \mA[u, \a, u,u] 
+ 2 \mA[u,u,u,\a]
+ 2 \mB[u, \a, u,v]
\notag \\ & \quad \ 
+ \mB[u,u,\a,v]
+ \mB[u,u,u,\b]
+ 2 \mC[u, \a, v, v]
+ 2 \mC[u, u, v, \b]
+ \mD[\a, v, v, v]
\notag \\ & \quad \ 
+ \mD[u, \b, v, v]
+ 2 \mD[u, v, v, \b]
+ 2 \mF[v, \b, v, v]
+ 2 \mF[v, v, v, \b]
\} \Big( \begin{matrix} u \\ v \end{matrix} \Big). 
\label{def mE}
\end{align}
With the change of variable \eqref{def Phi5}, 
the transformed equation is
\begin{equation} \label{3101.16}
\pa_t (u,v) = W(u,v)
\end{equation}
where 
\begin{equation*} 
W(u,v) := \big( (\Phi^{(5)})'(u,v) \big)^{-1} X^+( \Phi^{(5)}(u,v) ).
\end{equation*}
Recalling \eqref{X+ new}, we decompose 
\begin{equation} \label{W decomp}
W(u,v) = \big( 1 + \mP(\Phi^{(5)}(u,v)) \big) \big( \mD_1(u,v) + X_3^+(u,v) \big) 
+ W_5(u,v) + W_{\geq 7}(u,v),
\end{equation}
where $(1 + \mP(\Phi^{(5)})) (\mD_1 + X_3^+)$ give no contribution to the energy estimate, 
\begin{equation*} 
W_5(u,v) := X_5^+(u,v) + \mD_1(\mM(u,v)[u,v]) - \mK(u,v) \mD_1(u,v)
\end{equation*}
and $W_{\geq 7}(u,v)$ is defined by difference and contains only terms 
of homogeneity at least seven in $(u,v)$.
The first component $(W_5)_1(u,v)$ of the vector field $W(u,v)$ is 
\begin{align}
(W_5)_1(u,v) 
& = \frac{i}{32} \sum_{\begin{subarray}{c} j,\ell,k \\ |j| = |\ell| \end{subarray}} 
u_j u_{-j} v_\ell v_{-\ell} u_k e^{ik \cdot x} 
|j|^2 |\ell|^2 
\Big( \frac{1}{|j|+|k|} - \frac{(1 - \d_{|\ell|}^{|k|})}{|\ell| - |k|} \Big)
\notag \\ & \quad 
+ \frac{3 i}{32} 
\sum_{\begin{subarray}{c} j,\ell,k \\ |k| = |j| + |\ell| \end{subarray}} 
u_j u_{-j} u_\ell u_{-\ell} v_k e^{ik \cdot x} 
|j| |\ell| |k|
\notag \\ & \quad 
+ \frac{i}{16}  \sum_{\begin{subarray}{c} j,\ell,k \\ |j| = |k| \end{subarray}} 
u_j u_{-j} u_\ell v_{-\ell} v_k e^{ik \cdot x} 
|j|^2 |\ell|
\Big( 6 + \frac{|\ell|}{|\ell| + |j|}
+ \frac{|\ell| (1 - \d_{|\ell|}^{|j|}) }{|\ell| - |j|} \Big)
\notag \\ & \quad 
+ \frac{3 i}{16} \sum_{\begin{subarray}{c} j,\ell,k \\ |k| = |j| - |\ell| \end{subarray}} 
u_j u_{-j} v_\ell v_{-\ell} v_k e^{ik \cdot x} |j| |\ell| |k|,
\label{W5 comp 1 after NF}
\end{align}
and its second component is 
\begin{align}
(W_5)_2(u,v) 
& = - \frac{i}{32} \sum_{\begin{subarray}{c} j,\ell,k \\ |j| = |\ell| \end{subarray}} 
v_j v_{-j} u_\ell u_{-\ell} v_k e^{ik \cdot x} 
|j|^2 |\ell|^2 
\Big( \frac{1}{|j|+|k|} - \frac{(1 - \d_{|\ell|}^{|k|})}{|\ell| - |k|} \Big)
\notag \\ & \quad 
- \frac{3 i}{32} 
\sum_{\begin{subarray}{c} j,\ell,k \\ |k| = |j| + |\ell| \end{subarray}} 
v_j v_{-j} v_\ell v_{-\ell} u_k e^{ik \cdot x} 
|j| |\ell| |k|
\notag \\ & \quad 
- \frac{i}{16}  \sum_{\begin{subarray}{c} j,\ell,k \\ |j| = |k| \end{subarray}} 
v_j v_{-j} v_\ell u_{-\ell} u_k e^{ik \cdot x} 
|j|^2 |\ell|
\Big( 6 + \frac{|\ell|}{|\ell| + |j|}
+ \frac{|\ell| (1 - \d_{|\ell|}^{|j|}) }{|\ell| - |j|} \Big)
\notag \\ & \quad 
- \frac{3 i}{16} \sum_{\begin{subarray}{c} j,\ell,k \\ |k| = |j| - |\ell| \end{subarray}} 
v_j v_{-j} u_\ell u_{-\ell} u_k e^{ik \cdot x} |j| |\ell| |k|.
\label{W5 comp 2 after NF}
\end{align}

\begin{lemma}[Lemma 5.1 of \cite{K mem}] 
\label{lemma:stima W5} 
For all $s \geq 0$, $(w,z) \in H^s_0(\T^d, c.c.) \cap 
H^{m_0}_0(\T^d, c.c.)$, one has 
\begin{equation*} 
\| W_{5}(u,v) \|_s 
\leq C \| u \|_{m_0}^4 \| u \|_s,
\end{equation*}
where $C$ is a universal constant.
\end{lemma}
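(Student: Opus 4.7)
The plan is to estimate term by term the explicit expression for $W_5$ given by \eqref{W5 comp 1 after NF}--\eqref{W5 comp 2 after NF}. By the real structure \eqref{real vector field}, preserved under $\Phi^{(5)}$, it suffices to bound $\|(W_5)_1(u,v)\|_s$, which is a sum of four quintic expressions, each carrying a resonance constraint on the summation indices: either $|j|=|\ell|$, or $|k|=|j|+|\ell|$, or $|j|=|k|$, or $|k|=|j|-|\ell|$.

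First I would dispose of the second and fourth sums in \eqref{W5 comp 1 after NF}, whose coefficient is the clean monomial $|j||\ell||k|$. Since the constraint $|k|=|j|\pm|\ell|$ forces $|k| \leq |j|+|\ell|$, the weight $|k|^{s+1}$ (coming from $|k|^s$ for the Sobolev norm together with the $|k|$ in the coefficient) can be distributed onto the $|j|$ and $|\ell|$ factors. Writing the resulting sum as a convolution on $\Gamma$ of quantities of the form $\lm^a S_\lm$ (with $a \leq 2$), and applying Young's convolution inequality together with \eqref{3105.5}, each of these two sums is bounded by $\lesssim \|u\|_{m_0}^4 \|u\|_s$.

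The first and third sums in \eqref{W5 comp 1 after NF} require slightly more care because of the potentially singular denominator $(|\ell|-|k|)^{-1}$. Here I would apply the uniform bound \eqref{3105.4}, which in dimension $d \geq 2$ replaces $(|\ell|-|k|)^{-1}$ by $3\min(|\ell|,|k|)$ at the price of one polynomial power, whereas in dimension $d=1$ one has the sharper $|\ell|-|k| \geq 1$ for $|\ell|\neq|k|$; this dichotomy is precisely the reason for the choice of $m_0$ in \eqref{def m0}. The resulting polynomial coefficient is at worst $|j|^2|\ell|^3$ for $d \geq 2$ and $|j|^2|\ell|^2$ for $d=1$. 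Grouping the quartic part in terms of the superactions $S_\lm = \sum_{|k|=\lm}|u_k|^2$ (which is possible thanks to the constraint $|j|=|\ell|$ or $|j|=|k|$), and invoking \eqref{3105.5} twice --- once to trade a factor $\lm^{2m_0} S_\lm$ for $\|u\|_{m_0}^2$, and once more after summing over $\lm$ --- each of these sums is again controlled by $\lesssim \|u\|_{m_0}^4 \|u\|_s$.

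The main obstacle is not conceptual but one of bookkeeping: for each of the four sums, and for the corresponding sums in $(W_5)_2$, one must track which of the five $u$-factors in the quintic monomial carries the $s$-weight and which ones carry the $m_0$-weights, and verify that the polynomial growth in $(|j|,|\ell|,|k|)$ is indeed absorbable into two copies of $\|u\|_{m_0}^2$ using \eqref{3105.4} and \eqref{3105.5}. Summing the eight contributions then yields the claimed universal bound.
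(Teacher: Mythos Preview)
Your approach is correct and is essentially the only natural one: each of the four sums in \eqref{W5 comp 1 after NF} is a Fourier multiplier acting on $u_k$ or $v_k$, whose symbol (a scalar built out of $B_\a, S_\a$) must be shown to be bounded uniformly in $k$ by $C\|u\|_{m_0}^4$; the dichotomy on $m_0$ in \eqref{def m0} comes exactly from whether one invokes \eqref{3105.4} or the integer lower bound $|\a-\lm|\geq 1$. Note that the paper does not supply its own proof here---the lemma is simply quoted from \cite{K mem}---so there is nothing to compare against beyond confirming your argument closes.

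One small clarification: in your treatment of the second and fourth sums, you should not distribute the full weight $|k|^{s+1}$ onto $|j|,|\ell|$. The factor $|k|^s$ must stay on $v_k$ to produce the $\|u\|_s$ in the final bound; only the single power $|k|$ from the coefficient $|j||\ell||k|$ gets absorbed via $|k|=|j|\pm|\ell|\leq|j|+|\ell|$, after which the symbol is bounded by $\sum_{\a,\b} S_\a S_\b(\a^2\b+\a\b^2)\leq 2\|u\|_1^2\|u\|_{1/2}^2$. (Young's inequality is overkill here; the trivial bound $\sum_{\a+\b=\lm}\leq\sum_{\a,\b}$ suffices.) Your final paragraph shows you have this straight, so this is a wording issue rather than a gap.
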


By \eqref{3101.16}, \eqref{W decomp}, \eqref{W5 comp 1 after NF}-\eqref{W5 comp 2 after NF}, 
the system for the Fourier coefficients becomes
\begin{align}
\pa_t u_k 
& = - i (1 + \mP) \Big( |k| u_k 
+ \frac{1}{4} \sum_{|j| = |k|} u_j u_{-j} |j|^2 v_k \Big)
\notag \\ & \quad 
+ \frac{i}{32} \sum_{\begin{subarray}{c} j,\ell \\ |j| = |\ell| \end{subarray}} 
u_j u_{-j} v_\ell v_{-\ell} u_k |j|^2 |\ell|^2 
\Big( \frac{1}{|j|+|k|} - \frac{(1 - \d_{|\ell|}^{|k|})}{|\ell| - |k|} \Big)
\notag \\ & \quad 
+ \frac{3 i}{32} 
\sum_{\begin{subarray}{c} j,\ell \\ |j| + |\ell| = |k| \end{subarray}} 
u_j u_{-j} u_\ell u_{-\ell} v_k |j| |\ell| |k|
\notag \\ & \quad 
+ \frac{i}{16} \sum_{\begin{subarray}{c} j,\ell \\ |j| = |k| \end{subarray}} 
u_j u_{-j} u_\ell v_{-\ell} v_k |j|^2 |\ell|
\Big( 6 + \frac{|\ell|}{|\ell| + |j|}
+ \frac{|\ell| (1 - \d_{|\ell|}^{|j|}) }{|\ell| - |j|} \Big)
\notag \intertext{} & \quad 
+ \frac{3 i}{16} \sum_{\begin{subarray}{c} j,\ell \\ |j| - |\ell| = |k| \end{subarray}} 
u_j u_{-j} v_\ell v_{-\ell} v_k |j| |\ell| |k| 
+ [(W_{\geq 7})_1(u,v)]_k
\label{eq for uk}
\end{align}
and
\begin{align}
\pa_t v_k 
& = i (1 + \mP) \Big( |k| v_k 
+ \frac{1}{4} \sum_{|j| = |k|} v_j v_{-j} |j|^2 u_k \Big)
\notag \\ & \quad 
- \frac{i}{32} \sum_{\begin{subarray}{c} j,\ell \\ |j| = |\ell| \end{subarray}} 
v_j v_{-j} u_\ell u_{-\ell} v_k |j|^2 |\ell|^2 
\Big( \frac{1}{|j|+|k|} - \frac{(1 - \d_{|\ell|}^{|k|})}{|\ell| - |k|} \Big)
\notag \\ & \quad 
- \frac{3 i}{32} 
\sum_{\begin{subarray}{c} j,\ell \\ |j| + |\ell| = |k| \end{subarray}} 
v_j v_{-j} v_\ell v_{-\ell} u_k |j| |\ell| |k|
\notag \\ & \quad 
- \frac{i}{16} \sum_{\begin{subarray}{c} j,\ell \\ |j| = |k| \end{subarray}} 
v_j v_{-j} v_\ell u_{-\ell} u_k |j|^2 |\ell|
\Big( 6 + \frac{|\ell|}{|\ell| + |j|}
+ \frac{|\ell| (1 - \d_{|\ell|}^{|j|}) }{|\ell| - |j|} \Big)
\notag \\ & \quad 
- \frac{3 i}{16} \sum_{\begin{subarray}{c} j,\ell \\ |j| - |\ell| = |k| \end{subarray}} 
v_j v_{-j} u_\ell u_{-\ell} u_k |j| |\ell| |k| 
+ [(W_{\geq 7})_2(u,v)]_k
\label{eq for vk}
\end{align}
where $[(W_{\geq 7})_1(u,v)]_k$ denotes the $k$-th Fourier coefficient 
of the first component of $W_{\geq 7}(u,v)$, and similarly for the second component; 
$\mP$ denotes, in short, $\mP(\Phi^{(5)}(u,v))$, which is a real function of time only, 
namely it is independent of $x$.

\begin{lemma}[Lemma 5.4 of \cite{K mem}] 
\label{lemma:ottobrata.4} 
For $d \geq 2$, 
the coefficients $a_{11}, c_{11}, f_{11}, a_{12}, b_{12}$, $c_{12}, d_{12}, f_{12}$ 
in \eqref{def a11 ottobrata}-\eqref{def f12 ottobrata} 
all satisfy the bound 
\[
| \text{coefficient}(k,j,\ell)| \leq C (|j|^4 |\ell|^2 + |j|^2 |\ell|^4)
\]
for some universal constant $C$. 
For $d=1$, they satisfy 
\[
| \text{coefficient}(k,j,\ell)| \leq C |j|^2 |\ell|^2.
\]
\end{lemma}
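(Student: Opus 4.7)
The proof is a direct case-by-case verification of the eight coefficients in \eqref{def a11 ottobrata}--\eqref{def f12 ottobrata}, using a uniform accounting of polynomial prefactors versus small-denominator bounds. Each coefficient is a product of a polynomial factor of total degree $\leq 4$ in $(|j|,|\ell|)$ and a handful of denominators, so the task reduces to proving that each denominator factor contributes at most $C(|j|+|\ell|)$ in $d\geq 2$ (or a bounded constant in $d=1$), and then summing the degree count.

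The denominators fall into three families. \emph{Family A} consists of sums such as $|j|+|\ell|$, $|j|+|k|$, $|\ell|+|k|$, $|k|+|j|+|\ell|$: each is $\geq 2$ since every element of $\Gamma$ is $\geq 1$, and hence contributes only a bounded constant. \emph{Family B} consists of two-norm differences $|j|-|k|$, $|\ell|-|k|$, $|\ell|-|j|$, each multiplied by a Kronecker delta removing the resonant case. These are controlled by the arithmetic inequality $||\a|-|\lm||\geq 1/(|\a|+|\lm|)$ for distinct $\a,\lm\in\Gamma$ (since $\a^2,\lm^2\in\N$ and $|\a^2-\lm^2|\geq 1$); a case split on $|\a|\gtrless 2|\lm|$ yields $\tfrac{1}{||\a|-|\lm||}\leq C\min(|\a|,|\lm|)\leq C(|j|+|\ell|)$ in $d\geq 2$, and $\leq 1$ in $d=1$. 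This handles the denominators in $c_{11}, f_{11}, b_{12}, d_{12}$; the coefficients $a_{11}$ and $f_{12}$ contain only Family-A denominators.

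\emph{Family C} consists of three-norm denominators $|k|-|j|-|\ell|$ (in $a_{12}$) and $|k|-|j|+|\ell|$ (in $c_{12}$), and is the crux of the proof. I rationalize
\[
|k|-|j|-|\ell| \;=\; \frac{|k|^2-(|j|+|\ell|)^2}{|k|+|j|+|\ell|} \;=\; \frac{N-2|j||\ell|}{|k|+|j|+|\ell|}\,,\qquad N := |k|^2-|j|^2-|\ell|^2 \in \Z.
\]
If $|j|^2|\ell|^2$ is a perfect square then $2|j||\ell|\in\Z$ and the numerator is a nonzero integer in the non-resonant case, so $\geq 1$; otherwise $2|j||\ell|=2\sqrt m$ with $m$ not a perfect square, and a second rationalization gives $|N-2\sqrt m|(N+2\sqrt m)=|N^2-4m|\geq 1$. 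The delicate regime is $|k|\sim|j|+|\ell|$, where $N\sim 2|j||\ell|$ and hence $N+2\sqrt m\leq C|j||\ell|$, producing the uniform bound
\[
\frac{1}{||k|-|j|-|\ell||} \;\leq\; C|j||\ell|\,(|k|+|j|+|\ell|) \;\leq\; C|j||\ell|(|j|+|\ell|);
\]
outside this regime ($|k|\leq\tfrac12(|j|+|\ell|)$ or $|k|\geq 2(|j|+|\ell|)$) the denominator is of order $|j|+|\ell|$ or $|k|$ and gives a smaller contribution. A sign-flipped version of the same argument handles $|k|-|j|+|\ell|$. In $d=1$ all three-norm differences are nonzero integers in the non-resonant case and the bound collapses to $\leq 1$.

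Combining, each coefficient is majorized by its explicit polynomial prefactor (of total degree $\leq 4$ in $(|j|,|\ell|)$) times the product of denominator contributions. The worst case is $|j|^2|\ell|^2(|j|+|\ell|)^2\leq C(|j|^4|\ell|^2+|j|^2|\ell|^4)$ in $d\geq 2$, and $|j|^2|\ell|^2$ in $d=1$, as claimed. The main obstacle is precisely the sharp bound on Family C: the naive rationalization only gives $|N-2\sqrt m|\gtrsim 1/(|j|+|\ell|)^2$ and hence $\tfrac{1}{||k|-|j|-|\ell||}\lesssim(|j|+|\ell|)^3$, which would yield an overall estimate $\sim |j|^5|\ell|+\ldots$ exceeding $|j|^4|\ell|^2+|j|^2|\ell|^4$. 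Saving the missing power requires exploiting the near-resonance $|k|\sim|j|+|\ell|$ to bound $N+2\sqrt m$ by $C|j||\ell|$ rather than $C(|j|+|\ell|)^2$, and this is what matches the claimed degree.
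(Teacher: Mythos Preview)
Your proof is correct. The paper does not provide its own proof of this lemma (it is quoted from \cite{K mem}), so there is nothing to compare against directly; what you have written is a valid self-contained verification.

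The key step, which you isolate correctly, is the sharp lower bound on the three-norm small divisors $|k|-|j|-|\ell|$ and $|k|-|j|+|\ell|$. Your double-rationalization argument is the right one: writing $|k|-|j|-|\ell|=(N-2\sqrt m)/(|k|+|j|+|\ell|)$ with $N=|k|^2-|j|^2-|\ell|^2\in\Z$ and $m=|j|^2|\ell|^2\in\N$, then using $|N^2-4m|\geq 1$ together with the crucial observation that in the near-resonant regime $||k|-(|j|+|\ell|)|\leq 1$ one has $|N+2\sqrt m|\leq C|j||\ell|$ (rather than the crude $C(|j|+|\ell|)^2$). This yields $\tfrac{1}{||k|-|j|-|\ell||}\leq C|j||\ell|(|j|+|\ell|)$, which, combined with the prefactor $|j||\ell|(|j|+|\ell|)$ in $a_{12}$, gives exactly $|j|^2|\ell|^2(|j|+|\ell|)^2\leq C(|j|^4|\ell|^2+|j|^2|\ell|^4)$. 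Your remark that the naive bound would produce $|j|^5|\ell|+|j||\ell|^5$, which is \emph{not} dominated by $|j|^4|\ell|^2+|j|^2|\ell|^4$, is accurate and explains why the refinement is essential.

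Two minor clarifications that do not affect correctness: (i) your description ``polynomial factor of total degree $\leq 4$ times denominators'' is slightly loose for $b_{12}$ and $d_{12}$, where factors of $|\ell|$ or $|j|$ sit inside the bracket alongside denominators, but your eventual degree count is right once you track these; (ii) for $c_{12}$ with $|\ell|>|j|$ the denominator $|k|-|j|+|\ell|\geq |k|\geq 1$ trivially, so only the case $|j|>|\ell|$ requires the rationalization, and there the Kronecker delta $\d_{|k|}^{|j|-|\ell|}$ removes the unique zero of the denominator. The two-norm bound $\tfrac{1}{||\a|-|\lm||}\leq 3\min(|\a|,|\lm|)$ you use for Family B is exactly inequality \eqref{3105.4} of the paper.
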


\begin{lemma}[Lemma 5.5 of \cite{K mem}] 
\label{lemma:ottobrata.5}
Let $m_1$ be defined in \eqref{def m1}.
All the operators $\mG \in \{ \mA_{11}, \mC_{11}, \mF_{11},$ 
$\mA_{12}, \mB_{12}, \mC_{12}, \mD_{12}, \mF_{12} \}$ satisfy 
\[ 
\| \mG[u, v, w, z] h \|_s 
\leq C \| u \|_{m_1} \| v \|_{m_1} \| w \|_{m_1} \| z \|_{m_1} \| h \|_s
\]
for all complex functions $u,v,w,z,h$, all real $s \geq 0$, 
where $C$ is a universal constant. 
\end{lemma}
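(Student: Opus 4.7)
The plan is to exploit the fact that every operator $\mG$ in the list is a Fourier multiplier on the ``test function'' $h$, i.e.\ its action is diagonal in the frequency variable $k$ of $h$. Indeed, inspecting the definition of $\mA_{11}, \ldots, \mF_{12}$, the $k$-th Fourier coefficient of $\mG[u,v,w,z]h$ is
\[
[\mG[u,v,w,z]h]_k = h_k \cdot \s_k(u,v,w,z),
\qquad
\s_k(u,v,w,z) := \sum_{j,\ell} u_j v_{-j} w_\ell z_{-\ell}\, g(j,\ell,k),
\]
where $g$ is the corresponding coefficient among $a_{11},\ldots,f_{12}$. Consequently the Sobolev estimate reduces to a uniform-in-$k$ bound $|\s_k| \leq C \|u\|_{m_1}\|v\|_{m_1}\|w\|_{m_1}\|z\|_{m_1}$, because then
\[
\|\mG[u,v,w,z]h\|_s^2
= \sum_k |k|^{2s} |\s_k|^2 |h_k|^2
\leq C^2 \|u\|_{m_1}^2\|v\|_{m_1}^2\|w\|_{m_1}^2\|z\|_{m_1}^2 \,\|h\|_s^2.
\]

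To obtain the uniform bound on $|\s_k|$, I would invoke Lemma \ref{lemma:ottobrata.4}, which provides estimates on $|g(j,\ell,k)|$ that are independent of $k$ and that split as a sum of separated products of powers of $|j|$ and $|\ell|$: namely $|g| \lesssim |j|^2|\ell|^2$ for $d=1$, and $|g| \lesssim |j|^4|\ell|^2 + |j|^2|\ell|^4$ for $d \geq 2$. Applying the triangle inequality and Cauchy--Schwarz separately in $j$ and in $\ell$, one has
\[
\sum_j |j|^{2p} |u_j v_{-j}|
\leq \Big( \sum_j |j|^{2p}|u_j|^2 \Big)^{\frac12} \Big( \sum_j |j|^{2p}|v_{-j}|^2 \Big)^{\frac12} = \|u\|_p \|v\|_p\,,
\]
and the analogous bound for the $\ell$-sum involving $w,z$. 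For $d=1$ this already yields $|\s_k| \lesssim \|u\|_1\|v\|_1\|w\|_1\|z\|_1 = \|u\|_{m_1}\|v\|_{m_1}\|w\|_{m_1}\|z\|_{m_1}$. For $d\geq 2$, the two terms $|j|^4|\ell|^2$ and $|j|^2|\ell|^4$ produce respectively $\|u\|_2\|v\|_2\|w\|_1\|z\|_1$ and $\|u\|_1\|v\|_1\|w\|_2\|z\|_2$; using the monotonicity $\|\cdot\|_1\leq \|\cdot\|_2$ and $m_1=2$, both are majorised by $\|u\|_{m_1}\|v\|_{m_1}\|w\|_{m_1}\|z\|_{m_1}$.

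There is no genuine obstacle here: the hard arithmetic work has already been done in Lemma \ref{lemma:ottobrata.4}, and this lemma is essentially a translation of those pointwise bounds on the symbols into Sobolev bounds on the multilinear operators. The one small point one must verify by inspection is that all eight operators really share the multiplier-in-$k$ structure (they do, since in every defining formula $h_k$ appears diagonally, with an $x$-dependence carried only by $e^{ik\cdot x}$), and that the asymmetric cross-terms $|j|^4|\ell|^2$ and $|j|^2|\ell|^4$ in dimension $d\geq 2$ still produce the symmetric four-factor bound --- which they do precisely because $m_1=2$ absorbs the highest weight on both sides.
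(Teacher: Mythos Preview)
Your proposal is correct. The paper does not reproduce a proof of this lemma (it is quoted from \cite{K mem}), but your argument is exactly the intended one: the operators act as Fourier multipliers on $h$, Lemma~\ref{lemma:ottobrata.4} furnishes the uniform-in-$k$ bound on the symbols, and Cauchy--Schwarz in $j$ and in $\ell$ converts the pointwise coefficient estimate into the four-factor Sobolev bound with $m_1$ as in \eqref{def m1}.
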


\begin{lemma}[Lemma 5.6 of \cite{K mem}] 
\label{lemma:stime mM mK} 
For all $s \geq 0$, all $(u,v), (\a,\b)$, 
one has
\begin{align} \label{stima mM}
\Big\| \mM(u,v) \Big( \begin{matrix} \a \\ \b \end{matrix} \Big) \Big\|_s 
& \leq C \| u \|_{m_1}^4 \|\a \|_s ,
\\
\Big\| \mK(u,v) \Big( \begin{matrix} \a \\ \b \end{matrix} \Big) \Big\|_s 
& \leq C \| u \|_{m_1}^3 ( \| u \|_{m_1} \|\a \|_s + \| u \|_s \| \a \|_{m_1} ),
\notag
\end{align}
where $m_1$ is defined in \eqref{def m1} and $C$ is a universal constant.

There exists a universal constant $\d > 0$ such that, for $\| u \|_{m_1} < \d$, 
the operator $(I + \mK(u,v)) : H^{m_1}_0(\T^d, c.c.)$ $\to H^{m_1}_0(\T^d, c.c.)$ 
is invertible, with inverse satisfying
\begin{equation*} 
\Big\| (I + \mK(u,v))^{-1} \Big( \begin{matrix} \a \\ \b \end{matrix} \Big) \Big\|_s 
\leq C (\| \a \|_s + \| u \|_{m_1}^3 \| u \|_s \| \a \|_{m_1})
\end{equation*}
for all $s \geq 0$. 
\end{lemma}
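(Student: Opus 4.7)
The plan is to reduce the three assertions to Lemma~\ref{lemma:ottobrata.5} by unfolding the definitions of $\mM$ and $\mK$, and then to obtain the invertibility statement by a standard Neumann-series argument combined with a tame-estimate rearrangement.

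For the first bound, I unpack $\mM(u,v)$ via \eqref{def mM} as $\mA+\mB+\mC+\mD+\mF$. When applied to $(\a,\b)$, each matrix entry produces a term of the form $\mG[\cdot,\cdot,\cdot,\cdot]\,h$ with the four bracket slots filled by $u$ or $v$ and the fifth ``multiplication'' slot carrying $\a$ or $\b$. Since we work in $H^s_0(\T^d,c.c.)$, where $v=\overline u$ and $\b=\overline\a$, the mixed norms collapse to $\|u\|_{m_1}$ and $\|\a\|_s$. The nonvanishing $11$- and $12$-entries of the five blocks fall directly under Lemma~\ref{lemma:ottobrata.5}, the $21$- and $22$-entries satisfy identical bounds by complex conjugation, and the missing components $\mB_{11}$, $\mD_{11}$ vanish by \eqref{def b11 ottobrata}, \eqref{def d11 ottobrata}. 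Summing the finitely many terms yields \eqref{stima mM}.

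For the second bound, write $\mK=\mM+\mE$ from \eqref{def mK}; the $\mM$-contribution is already handled, and it only remains to estimate $\mE(u,v)(\a,\b)$. Inspecting \eqref{def mE}, every summand has the shape $\mG[\star,\star,\star,\star]\,h$ with the multiplication slot $h$ filled by $u$ or $v$, exactly one bracket slot filled by $\a$ or $\b$, and the remaining three bracket slots filled by $u$ or $v$. Lemma~\ref{lemma:ottobrata.5} then delivers a bound of order $\|u\|_{m_1}^3\,\|\a\|_{m_1}\,\|u\|_s$ per term; adding this to the $\mM$-contribution of order $\|u\|_{m_1}^4\,\|\a\|_s$ produces the stated tame estimate.

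For invertibility I specialize the tame bound to $s=m_1$, obtaining $\|\mK(u,v)(\a,\b)\|_{m_1}\leq C'\|u\|_{m_1}^4\,\|\a\|_{m_1}$ for a universal $C'$. Choosing $\d>0$ with $C'\d^4\leq 1/2$, the operator $I+\mK$ is invertible on $H^{m_1}_0(\T^d,c.c.)$ by geometric series, with $\|(I+\mK)^{-1}(\a,\b)\|_{m_1}\leq 2\|\a\|_{m_1}$. For general $s\geq 0$, set $(\tilde\a,\tilde\b):=(I+\mK)^{-1}(\a,\b)$ and use the identity $(\tilde\a,\tilde\b)=(\a,\b)-\mK(u,v)(\tilde\a,\tilde\b)$; the tame bound on $\mK$, together with the low-norm estimate $\|\tilde\a\|_{m_1}\leq 2\|\a\|_{m_1}$, yields
\[
\|\tilde\a\|_s \,\leq\, \|\a\|_s + \tfrac{1}{2}\|\tilde\a\|_s + 2C\|u\|_{m_1}^3\,\|u\|_s\,\|\a\|_{m_1},
\]
and absorbing $\|\tilde\a\|_s$ into the left-hand side gives the claimed inverse estimate. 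The only delicate point is bookkeeping: verifying that every entry of $\mM$ and $\mE$ is either null by the vanishing coefficients or covered by Lemma~\ref{lemma:ottobrata.5}, and that the $c.c.$ structure collapses the mixed norms onto those of $u$ and $\a$; there is no genuine analytic obstacle.
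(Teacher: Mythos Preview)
The paper does not contain its own proof of this lemma: it is stated in the appendix as ``Lemma 5.6 of \cite{K mem}'' and simply cited from that reference, so there is no in-paper argument to compare against. Your proposal is correct and is precisely the natural route one expects: reduce the bounds on $\mM$ and $\mE$ to the entrywise estimate of Lemma~\ref{lemma:ottobrata.5} (noting that $b_{11}=d_{11}=0$ and that the $21$/$22$ entries follow from the real structure), and then obtain the inverse estimate by Neumann series at $s=m_1$ followed by the standard tame absorption for general $s$.
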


The nonlinear, continuous map $\Phi^{(5)}$ is invertible in a ball around the origin. 

\begin{lemma}[Lemma 5.7 of \cite{K mem}] 
\label{lemma:Phi5 inv}
There exists a universal constant $\d > 0$ such that, 
for all $(w,z) \in H^{m_1}_0(\T^d, c.c.)$ in the ball $\| w \|_{m_1} \leq \d$, 
there exists a unique $(u,v) \in H^{m_1}_0(\T^d, c.c.)$ such that 
$\Phi^{(5)}(u,v) = (w,z)$, with $\| u \|_{m_1} \leq 2 \| w \|_{m_1}$. 
If, in addition, $w \in H^s_0$ for some $s > m_1$, then $u$ also belongs to $H^s_0$, 
and $\| u \|_s \leq 2 \| w \|_s$.
This defines the continuous inverse map $(\Phi^{(5)})^{-1} : H^s_0(\T^d, c.c.) \cap 
\{ \| w \|_{m_1} \leq \d \}$ $\to H^s_0(\T^d, c.c.)$.
\end{lemma}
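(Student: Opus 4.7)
The plan is to solve the equation $\Phi^{(5)}(u,v)=(w,z)$, that is, $(I+\mM(u,v))\binom{u}{v}=\binom{w}{z}$, by rewriting it as the fixed point problem
\[
\binom{u}{v} \,=\, \Psi_{w,z}(u,v) \,:=\, \binom{w}{z} - \mM(u,v)\binom{u}{v}
\]
and applying Banach's contraction principle on a closed ball of $H^{m_1}_0(\T^d,c.c.)$. The crucial input is the quintic tame bound $\|\mM(u,v)\binom{\a}{\b}\|_s \leq C\|u\|_{m_1}^4\|\a\|_s$ provided by Lemma \ref{lemma:stime mM mK}, together with the analogous bound on the differential $\mK$. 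All the combinatorial work sits inside these estimates.

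First I would work at the base regularity $s=m_1$. Fix $\|w\|_{m_1}\leq\d$ with $\d$ to be chosen small, and let $B:=\{(u,v)\in H^{m_1}_0(\T^d,c.c.):\|u\|_{m_1}\leq 2\|w\|_{m_1}\}$. On $B$ the bound $\|\mM(u,v)\binom{u}{v}\|_{m_1}\leq C\|u\|_{m_1}^5 \leq C(2\d)^4 \|u\|_{m_1}$ is $\leq \tfrac12\|u\|_{m_1}$ as soon as $C(2\d)^4\leq\tfrac12$, so $\|\Psi_{w,z}(u,v)\|_{m_1}\leq\|w\|_{m_1}+\|w\|_{m_1}=2\|w\|_{m_1}$ and $\Psi_{w,z}(B)\subseteq B$. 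For the Lipschitz estimate, write
\[
\mM(u_1,v_1)\binom{u_1}{v_1} - \mM(u_2,v_2)\binom{u_2}{v_2}
= \int_0^1 \mK(u_\theta,v_\theta)\binom{u_1-u_2}{v_1-v_2}\,d\theta,
\]
with $(u_\theta,v_\theta)=\theta(u_1,v_1)+(1-\theta)(u_2,v_2)$, and apply the bound $\|\mK(u,v)\binom{\a}{\b}\|_{m_1}\lesssim \|u\|_{m_1}^4\|\a\|_{m_1}$ (which is what Lemma \ref{lemma:stime mM mK} gives at $s=m_1$) to conclude that $\Psi_{w,z}$ is a $\tfrac12$-contraction on $B$ after shrinking $\d$ further. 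Banach's theorem then delivers a unique fixed point $(u,v)\in B$, which satisfies $\|u\|_{m_1}\leq 2\|w\|_{m_1}$. Preservation of the $c.c.$ structure $v=\bar u$ by $\Psi_{w,z}$ (and hence the fact that the fixed point lies in $H^{m_1}_0(\T^d,c.c.)$) is automatic from the construction of $\mM$ via the coefficients \eqref{def a11 ottobrata}--\eqref{def f12 ottobrata}, which is designed to respect the real vector field property \eqref{real vector field} of $X^+$, exactly as in Lemma \ref{lemma:3001.2} for the cubic normal form.

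For the higher regularity statement, I would either rerun the contraction on $B_s:=B\cap\{\|u\|_s\leq 2\|w\|_s\}$, using $\|\mM(u,v)\binom{u}{v}\|_s\leq C\|u\|_{m_1}^4\|u\|_s\leq C(2\d)^4\cdot 2\|w\|_s\leq\|w\|_s$ to see that $\Psi_{w,z}(B_s)\subseteq B_s$, or — equivalently — evaluate the fixed point equation $u=w-\mM(u,v)\binom{u}{v}$ in $\|\cdot\|_s$ and use $\|u\|_s\leq \|w\|_s + C(2\d)^4\|u\|_s$, which yields $\|u\|_s\leq 2\|w\|_s$ once $\d$ is small. Uniqueness of the fixed point in $H^{m_1}$ guarantees consistency of the two constructions. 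Continuity of $(\Phi^{(5)})^{-1}$ in $H^s$ follows from the standard continuous dependence of a Banach fixed point on the parameter $(w,z)$, which enters $\Psi_{w,z}$ linearly.

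The main obstacle here is mild: the entire argument reduces to a standard Banach fixed point, and all substantive estimates are already packaged in Lemma \ref{lemma:stime mM mK}. The only genuine check is that $\Psi_{w,z}$ preserves the $c.c.$ subspace, which is algebraic and inherited from the construction of the normal form.
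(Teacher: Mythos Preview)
Your argument is correct and is exactly the standard Banach fixed point construction one expects here; note, however, that the present paper does not actually give its own proof of this lemma --- it is simply quoted as Lemma~5.7 of \cite{K mem}, in parallel with Lemma~\ref{lemma:Phi4 inv} (Lemma~4.3 of \cite{K old}), whose proof follows the same contraction scheme you describe. So there is no alternative proof in the paper to compare with: your proposal matches the intended approach.
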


By equations (5.35) of \cite{K mem}, 
the remainder $W_{\geq 7}(u,v)$ is given by
\begin{align} 
W_{\geq 7}(u,v) 
& = [1 + \mP(\Phi^{(5)}(u,v))] \breve \mK(u,v) (W_5(u,v) - X_5^+(u,v))
\notag \\ & \quad 
+ \mP(\Phi^{(5)}(u,v)) (W_5(u,v) - X_5^+(u,v))
\notag \\ & \quad 
+ \breve \mK(u,v) [1 + \mP(\Phi^{(5)}(u,v))] X_3^+(u,v) 
\notag \\ & \quad 
+ \breve \mK(u,v) X_5^+(u,v) 
\notag \\ & \quad 
+ (I + \mK(u,v))^{-1} [1 + \mP(\Phi^{(5)}(u,v))] [X_3^+(\Phi^{(5)}(u,v)) - X_3^+(u,v)]
\notag \\ & \quad 
+ (I + \mK(u,v))^{-1} [X_5^+(\Phi^{(5)}(u,v)) - X_5^+(u,v)]
\notag \\ & \quad 
+ (I + \mK(u,v))^{-1} X_{\geq 7}^+(\Phi^{(5)}(u,v)),
\label{W geq 7 bis}
\end{align}
where $\breve \mK(u,v) := \sum_{n=1}^\infty (- \mK(u,v))^n$.

\begin{lemma}[Lemma 5.8 of \cite{K mem}]
\label{lemma:stime W geq 7}
There exist universal constants $\d > 0$, $C > 0$ such that, 
for all $s \geq 0$, 
for all $(u,v) \in H^{m_1}_0(\T^d, c.c.) \cap H^s_0(\T^d, c.c.)$ 
in the ball $\| u \|_{m_1} \leq \d$, one has 
\begin{align*}
& \| W_{\geq 7}(u,v) \|_s \leq C \| u \|_{m_1}^6 \| u \|_s.
\end{align*}
\end{lemma}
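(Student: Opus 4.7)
The plan is to bound each of the seven summands appearing in the explicit decomposition \eqref{W geq 7 bis} of $W_{\geq 7}(u,v)$ using the library of estimates already assembled in the appendix: Lemmas \ref{lemma:elementary}, \ref{lemma:20.05.2020}, \ref{lemma:stima W5}, \ref{lemma:stime mM mK} and \ref{lemma:Phi5 inv}. For each piece I would count the total homogeneity in $\|u\|_{m_1}$, aiming at the exponent $6$ while keeping exactly one linear factor $\|u\|_s$. Throughout I use $m_0\le m_1$, so that $\|u\|_{m_0}\le\|u\|_{m_1}$.

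The straightforward pieces are handled by direct composition. For $(I+\mK(u,v))^{-1}X_{\geq 7}^+(\Phi^{(5)}(u,v))$ one combines Lemmas \ref{lemma:20.05.2020}, \ref{lemma:stime mM mK} and \ref{lemma:Phi5 inv} directly. For every piece of the form $\breve\mK\cdot(\text{quintic})$ or $\mP\cdot(\text{quintic})$, where the quintic factor is either $X_5^+(u,v)$ or $W_5(u,v)-X_5^+(u,v)$, the bound $\mP\lesssim\|u\|_{m_1}^2$ (from \eqref{stima mP}) and the tame estimate on $\breve\mK = -\mK + \sum_{n\ge 2}(-\mK)^n$ (which supplies a factor $\|u\|_{m_1}^4$ via Lemma \ref{lemma:stime mM mK} and the Neumann series) combine with the quintic bound $\|u\|_{m_1}^4\|u\|_s$ (for $X_5^+$ from Lemma \ref{lemma:20.05.2020}, and for $W_5-X_5^+$ by triangle inequality together with Lemma \ref{lemma:stima W5}) to give an upper bound that comfortably exceeds $\|u\|_{m_1}^6\|u\|_s$. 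For the piece $\breve\mK\cdot[1+\mP]X_3^+(u,v)$ the budget is exactly spent: Lemma \ref{lemma:elementary} gives $\|X_3^+\|_s\lesssim\|u\|_{m_1}^2\|u\|_s$ and $\|X_3^+\|_{m_1}\lesssim\|u\|_{m_1}^3$, and the mixed-norm bound of Lemma \ref{lemma:stime mM mK} combines these into $\|u\|_{m_1}^6\|u\|_s$.

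The only step requiring a small additional computation is the pair of ``difference'' terms $(I+\mK)^{-1}[1+\mP]\bigl[X_3^+(\Phi^{(5)}(u,v))-X_3^+(u,v)\bigr]$ and $(I+\mK)^{-1}\bigl[X_5^+(\Phi^{(5)}(u,v))-X_5^+(u,v)\bigr]$. The key input is the identity $\Phi^{(5)}(u,v)-(u,v)=\mM(u,v)(u,v)$, for which \eqref{stima mM} provides $\|\mM(u,v)(u,v)\|_s\lesssim\|u\|_{m_1}^4\|u\|_s$. Since $X_3^+$ and $X_5^+$ are the explicit multilinear forms in $(u,v)$ written out in Section \ref{sec:NF}, the corresponding finite differences expand by a multilinear mean value argument into a sum whose typical summand takes the form
\[
\|u\|_{m_1}^{n-1}\,\|\mM(u,v)(u,v)\|_s \;+\; \|u\|_{m_1}^{n-2}\,\|u\|_s\,\|\mM(u,v)(u,v)\|_{m_1},
\]
with $n=3$ for $F=X_3^+$ and $n=5$ for $F=X_5^+$. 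In either case the extra factor $\|u\|_{m_1}^4$ contributed by $\mM$ lifts the total homogeneity to at least $6$, yielding the bound $\|u\|_{m_1}^6\|u\|_s$. This expansion is essentially the proof of Lemmas \ref{lemma:elementary} and \ref{lemma:20.05.2020} revisited at the linearized level, and no genuinely new cohomological issue arises—the resonant balance has already been absorbed into the construction of $W_5$ and $X_5^+$. Summing the seven contributions then produces the required bound $\|W_{\geq 7}(u,v)\|_s\le C\|u\|_{m_1}^6\|u\|_s$, with $\delta$ chosen so that all the smallness assumptions of Lemmas \ref{lemma:stime mM mK}, \ref{lemma:Phi5 inv} and \ref{lemma:elementary} are in force.
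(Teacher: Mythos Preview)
Your proposal is correct and follows precisely the natural route: the paper does not reproduce a proof of this lemma (it is quoted from \cite{K mem}), but it does supply the explicit decomposition \eqref{W geq 7 bis} and all the requisite operator bounds, and your term-by-term bookkeeping using Lemmas \ref{lemma:elementary}, \ref{lemma:20.05.2020}, \ref{lemma:stima W5}, \ref{lemma:stime mM mK}, \ref{lemma:Phi5 inv} is exactly how the estimate is assembled. In particular, your treatment of the two ``difference'' terms via the identity $\Phi^{(5)}(u,v)-(u,v)=\mM(u,v)(u,v)$ and multilinear expansion of $X_3^+$, $X_5^+$ is the intended mechanism, and the homogeneity count you give is accurate in each case.
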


\subsection{Derivation of the effective equation and structure of the remainder}
\label{sec:App B}

Now that the construction of the normal form has been recalled in details, 
to obtain the ``effective equations'' \eqref{3005.1}-\eqref{3005.5}
on Fourier spheres and to prove the estimates in Lemma \ref{lemma:W geq 7 new} 
for the single Fourier coefficient 
of the remainder $W_{\geq 7}(u,v)$ 
is not difficult. 

The derivation of \eqref{3005.1}-\eqref{3005.5}
is a straightforward calculation:
use the definition \eqref{def SB} of $S_\lm, B_\lm$, 
the equations \eqref{eq for uk}-\eqref{eq for vk} 
for the evolution of the Fourier coefficients $u_k, v_k$,
and sum over all indices $k \in \Z^d$ on the sphere $|k| = \lm$.

\begin{proof}[Proof of Lemma \ref{lemma:W geq 7 new}] 
The vector field $X(\eta,\psi)$ in \eqref{syst DBR}
is given by the simple explicit formula in \eqref{syst 6 dic}, 
where the multiplicative factors $P(\eta,\psi)$ and $(\la \Lm \psi, \Lm \psi \ra - \la \Lm \eta, \Lm \eta \ra)$ 
are functions of time, independent of $x$. 
Hence the Fourier coefficients of the remainder $\mR_{\geq 5}$ in \eqref{def mR geq 5} 
satisfy
\[
| [\mR_{\geq 5}(\eta,\psi)]_k| 
\leq \| \eta \|_{\frac12}^2 \| \eta \|_1^2 (|\eta_k| + |\eta_{-k}|)
\]
for all $(\eta,\psi) \in H^1_0(\T^d, c.c.)$, all $k \in \Z^d$
($|\psi_k| = |\eta_{-k}|$ because $\psi(x) = \overline{\eta(x)}$ 
and $\psi_k = \overline{(\eta_{-k})}$).
Recalling the definition \eqref{fix A12}-\eqref{fix C12} of $A_{12}, C_{12}$, 
and following the proof of Lemma 4.1 of \cite{K old}, 
we immediately obtain the inequalities for the Fourier coefficients
\[
|[ A_{12}[u,v]h ]_k| \leq \frac38 \| u \|_{m_0} \| v \|_{m_0} |h_k|, \quad \ 
|[ C_{12}[u,v]h ]_k| \leq \frac{1}{16} \| u \|_{1} \| v \|_{1} |h_k|
\]
for all complex-valued functions $u,v,h$, all $k$. 
Hence, from the definitions \eqref{def M}, \eqref{def E}, one has 
\begin{align*}
\Big| \Big[ M(w,z) \Big( \begin{matrix} \a \\ \b \end{matrix} \Big) \Big]_k \Big| 
& \leq C \| w \|_{m_0}^2 (|\a_k| + |\a_{-k}|),
\\
\Big| \Big[ E(w,z) \Big( \begin{matrix} \a \\ \b \end{matrix} \Big) \Big]_k \Big| 
& \leq C \| w \|_{m_0} \| \a \|_{m_0} (|w_k| + |w_{-k}|)
\end{align*}
for all $(w,z), (\a,\b) \in H^{m_0}_0(\T^d, c.c.)$, all $k \in \Z^d$. 
Applying recursively these bounds, using induction and Neumann series  
(like e.g.\ in the proof of Lemma 4.3 of \cite{K old}), 
we obtain estimates for the Fourier coefficients of the inverse operator 
\[
\Big| \Big[ (I + K(w,z))^{-1} \Big( \begin{matrix} \a \\ \b \end{matrix} \Big) \Big]_k \Big|
\leq C \{ (|\a_k| + |\a_{-k}|) + \| w \|_{m_0} \| \a \|_{m_0} (|w_k| + |w_{-k}|) \}
\]
for all $(w,z), (\a,\b) \in H^{m_0}_0(\T^d, c.c.)$, 
with $\| w \|_{m_0} \leq \d$, 
for all $k \in \Z^d$,
where $K(w,z) = M(w,z) + E(w,z)$ 
and $\d > 0$ is a universal constant. 
With (lengthly but straightforward) similar calculations, 
from formulas \eqref{X+ 5}, \eqref{X+ 57} for $X_5^+, X^+_{\geq 7}$ 
one proves that
\begin{align*}
| [X_5^+(w,z)]_k| 
& \leq C \| w \|_{m_0}^4 (|w_k| + |w_{-k}|), 
\\ 
| [X_{\geq 7}^+(w,z)]_k| 
& \leq C \| w \|_{m_0}^6 (|w_k| + |w_{-k}|)
\end{align*}
for all $(w,z) \in H^{m_0}_0(\T^d, c.c.)$, $\| w \|_{m_0} \leq \d$, for all $k \in \Z^d$. 

Then we repeat the same kind of (long, but simple and explicit) analysis 
for the operators $\mM(u,v), \mE(u,v), \mK(u,v)$ 
defined in \eqref{def mM}, \eqref{def mE}, \eqref{def mK}, 
and we estimate the Fourier coefficients of $W_{\geq 7}(u,v)$ 
using its formula \eqref{W geq 7 bis}.
\end{proof}

\begin{remark}\label{rem:matrix symbol}
The proof of Lemma \ref{lemma:W geq 7 new} 
is based on the properties of vector fields $V(u,v)$
having the structure $V(u,v) = F(u,v)(\begin{smallmatrix} u \\ v \end{smallmatrix})$
where $F(u,v)$ is a Fourier multiplier  
with matrix symbol depending (nonlinearly) on $(u,v)$. 
A more general version of Lemma \ref{lemma:W geq 7 new} 
for reality preserving transformed vector fields of this form 
can be proved with essentially the same ingredients. 
\end{remark}


\begin{footnotesize}

\end{footnotesize}

\bigskip

\begin{small}
Pietro Baldi

Dipartimento di Matematica e Applicazioni ``R. Caccioppoli''

Universit\`a di Napoli Federico II

Via Cintia, Monte S. Angelo, 
80126 Napoli

\texttt{pietro.baldi@unina.it}

\bigskip

Emanuele Haus

Dipartimento di Matematica e Fisica

Universit\`a Roma Tre

Largo San Leonardo Murialdo 1, 
00146 Roma

\texttt{ehaus@mat.uniroma3.it}
\end{small}

\end{document}